\title{Taxotopy Theory of Posets I: van Kampen Theorems}
\author{Amit Kuber and David Wilding}
\begin{document}

\newtheorem{definition}{Definition}[section]
\newtheorem{definitions}[definition]{Definitions}
\newtheorem{lemma}[definition]{Lemma}
\newtheorem{pro}[definition]{Proposition}
\newtheorem{theorem}[definition]{Theorem}
\newtheorem{cor}[definition]{Corollary}
\newtheorem{cors}[definition]{Corollaries}
\theoremstyle{remark}
\newtheorem{notation}[definition]{Notation}
\theoremstyle{remark}
\newtheorem{example}[definition]{Example}
\theoremstyle{remark}
\newtheorem{examples}[definition]{Examples}
\theoremstyle{remark}
\newtheorem{rmk}[definition]{Remark}
\theoremstyle{definition}
\newtheorem{que}[definition]{Question}
\theoremstyle{definition}
\newtheorem{por}[definition]{Porism}

\renewenvironment{proof}{\noindent {\bf{Proof.}}}{\hspace*{3mm}{$\Box$}{\vspace{9pt}}}

\newcommand\Set{\operatorname{Set}}
\newcommand\Cat{\operatorname{Cat}}
\newcommand\Gal{\operatorname{Gal}}
\newcommand\Pos{\operatorname{Pos}}
\newcommand\Pre{\operatorname{Pre}}
\newcommand\Top{\operatorname{Top}}
\newcommand\Alx{\operatorname{Alx}}
\newcommand\SC{\operatorname{SimpComp}}
\newcommand\Ob{\operatorname{Ob }}
\newcommand\id[1]{\mathrm{id}_{#1}}
\newcommand\im[1]{\operatorname{im}(#1)}
\newcommand\Adj[1]{\operatorname{Adj}(#1)}
\newcommand\Loc[1]{\operatorname{Loc}_{\operatorname{adj}}(#1)}
\newcommand\Subw[1]{\operatorname{Sub}_w(#1)}
\newcommand\Subs[1]{\operatorname{Sub}_s(#1)}
\newcommand\Cont[1]{\operatorname{Cont}_{#1}}
\newcommand\vsim[1]{\,\mid\!\sim_{#1}\,}
\newcommand\simv[1]{\,\sim\!\mid_{#1}\,}

\keywords{taxotopy; fundamental poset; adjoint functors; homotopy; van Kampen theorem; partial order}
\subjclass[2010]{55Pxx, 06A06, 54F05, 18A40, 55Q05}

\begin{abstract}
Given functors $F,G:\mathcal C\to\mathcal D$ between small categories, when is it possible to say that $F$ can be ``continuously deformed'' into $G$ in a manner that is not necessarily reversible? In an attempt to answer this question in purely category-theoretic language, we use adjunctions to define a `taxotopy' preorder $\preceq$ on the set of functors $\mathcal C\to\mathcal D$, and combine this data into a `fundamental poset' $(\Lambda(\mathcal C,\mathcal D),\preceq)$.

The main objects of study in this paper are the fundamental posets $\Lambda(\mathbf 1,P)$ and $\Lambda(\mathbb Z,P)$ for a poset $P$, where $\mathbf 1$ is the singleton poset and $\mathbb Z$ is the ordered set of integers; they encode the data about taxotopy of points and chains of $P$ respectively. Borrowing intuition from homotopy theory, we show that a suitable cone construction produces `null-taxotopic' posets and prove two forms of van Kampen theorem for computing fundamental posets via covers of posets.
\end{abstract}

\maketitle

\section{Introduction and Motivation}
Let $\Cat$, $\Pos$ and $\Pre$ denote the categories of small categories and functors, posets and monotone maps, and preorders and monotone maps respectively. Also let $\Top$, $\SC$ denote the category of topological spaces and continuous functions, simplicial complexes and simplicial maps respectively.

One can think of a small category as a topological space via the classifying space functor $B:\Cat\to\Top$ (see \cite[\S IV.3]{Weibel}), and thus adjectives like connected, homotopy equivalent etc., attributed to topological spaces can also be used for categories. For the full subcategory $\Pos$ of $\Cat$, this classifying space functor factors through $\SC$, for given $P\in\Pos$, the space $BP$ is the geometric realization of the order complex, $\Delta(P)$, which is a simplicial complex. See \cite{McCord} for the details of the construction of $\Delta(P)$ for finite posets.

Adjoint functors are essential features of categories that distinguish them from groupoids. Unfortunately, adjoint functors become homotopy equivalences under the functor $B$, and hence the distinction between left and right adjoints disappears. This paper is motivated by the following question.
\begin{que}
Is there a purely category-theoretic way of doing topology and/or homotopy theory with categories and functors, especially with posets and monotone maps, which distinguishes between left and right adjoints?
\end{que}

While studying a model structure on the category of monotone Galois connections between posets, we pondered this question with the aim of classifying posets by associating a fundamental object with each of them. A preliminary examination of the question suggested that, if successful, we would obtain a not necessarily reversible version of homotopy between two functors. We indeed succeeded in associating with each poset a fundamental poset, but the domain of this assignment extended to include all small categories.

The question is very general and several authors have developed directed versions of homotopy theory; the motivation ranges from curiosity to applications, especially in computer science, as is the case with any theory. Goubault \cite{Goubault} and coauthors laid the foundations for directed homotopy theory by developing geometric models for concurrency---a branch of computer science. Several approaches to directed topology were made which include preordered topological spaces, locally preordered topological spaces and Kelly's bitopological spaces. These ideas were unified in the notion of a directed topological space. Grandis developed directed homotopy theory of directed topological spaces in \cite{GrandisI, GrandisII}, where one associates a fundamental monoid or a fundamental category with such a space. See \cite{GrandisBook} for an overview of the directed algebraic topology and \cite{FajRos} for a categorical approach to directed homotopy theory.

The fundamental poset we define shares many properties with the fundamental group in the homotopy theory of topological spaces. For example, we will define what a null-taxotopic poset is (Definition \ref{nulltaxotopy}), compute fundamental posets for total orders (Corollary \ref{lambdatotalorder}) and for cones over posets (Theorem \ref{LCone}), show that a cone construction always produces null-taxotopic posets (Proposition \ref{listnulltax}), and also prove suitable van Kampen theorems for the fundamental posets (Theorems \ref{vanKampenlambda},\ref{vanKampengen}). 

This theory, which we call taxotopy theory, is still in its infancy, and we are continuing to develop it. One of the achievements of this theory is that it is possible to distinguish between two homotopy equivalent connected posets using taxotopy.

In the rest of this section we introduce the idea behind the partial order in the fundamental poset from four different contexts, ranging from semigroup theory to topology and category theory.

\subsection{Green's relations on semigroups}
Let $(M,\cdotp,1)$ be a monoid and $X$ be a left $M$-act, i.e., $X$ is a set with a map $\cdotp:M\times X\to X$ that satisfies $1\cdotp x=x$ and $a\cdotp(b\cdotp x)=(ab)\cdotp x$ for all $a,b\in M$, $x\in X$. One can define a category ${}_M\mathbf X$ with $X$ as the set of objects and $\{a\in M: a\cdotp x=y\}$ as the set of morphisms from $x$ to $y$. This category contains all the data of the $M$-action.

Recall that the orbit $\mathrm{Orb}(x)$ of $x\in X$ is the collection of all $y\in X$ such that there is some $a\in M$ with $a\cdotp x=y$. In other words, the hom-set ${}_M\mathbf X(x,y)$ is nonempty. We could define a binary relation on $X$ by $x\preceq_l y$ if and only if $y\in\mathrm{Orb}(x)$. The subscript $l$ denotes that $X$ is a left $M$-act. This relation is a preorder since it corresponds to the preorder reflection of the category ${}_M\mathbf X$. Note that $\preceq_l$ is an equivalence relation whenever $M$ is a group.

In semigroup theory, Green's relations are five equivalence relations that characterize elements of a semigroup in terms of the principal ideals they generate. See \cite{Howie} for more details.

Recall that for elements $a,b\in M$, Green's relation $L$ is defined by $a\,L\,b$ if and only if $Ma=Mb$. Choose $X=M$ in the above discussion, i.e., consider $M$ as a left regular act. Then $a\,L\,b$ if and only if both $a\preceq_l b$ and $b\preceq_l a$. Green's relation $R$ can be defined in a similar way using the preorder $\preceq_r$ obtained on right $M$-acts. Two other Green's relations, $H$ and $D$, can be defined in terms of $L$ and $R$, and hence in terms of $\preceq_l$ and $\preceq_r$. Finally, the relation $J$ is defined by $a\,J\,b$ if and only if $MaM=MbM$. Note that $c\in MaM$ if and only if there is some $c'$ such that $a\preceq_l c'\preceq_r c$ if and only if there is some $c''$ such that $a\preceq_r c''\preceq_l c$. Therefore $J$ can also be defined in terms of $\preceq_l$ and $\preceq_r$. In fact, the preorders carry more information than the equivalence relations!

For any small category $\mathcal C$, the monoid $\mathrm{End}(\mathcal C)$ of endofunctors of $\mathcal C$ acts on $\Ob{\mathcal C}$, say on the left. Construct the category ${}_{\mathrm{End}(\mathcal C)}\mathcal C$ as described above and obtain a preorder $\preceq_l$ on $\Ob{\mathcal C}$. Note that in a slice category ${}_{\mathrm{End}(\mathcal C)}\mathcal C/C$ there could be two objects with the same label but the labels corresponding to automorphisms of the category are unique. We want to avoid working within the strict setting of automorphisms, but we would still like to be able trace the source of a morphism whose target and label is known.

\subsection{Local categories of adjunctions}
Suppose $\mathcal C$ is a small category. Let $\Adj{\mathcal C}$ denote the monoid of all pairs of adjoint functors $F=(F^*\dashv F_*)$ on $\mathcal C$.
\begin{definition}
The \textbf{local category of adjunctions} on $\mathcal C$, denoted $\Loc{\mathcal C}$, has as objects the objects of $\mathcal C$ and as morphisms $C\to D$ those adjunctions $F\in\Adj{\mathcal C}$ which satisfy $F^*D=C$ and $F_*C=D$. We will use the notation $F\models C\preceq D$ to denote $F\in\Loc{\mathcal C}(C,D)$.
\end{definition}
We follow the convention of topos theory and additive category theory, where geometric morphisms are given the direction of the right adjoint, and thus choose the direction of the right adjoint for the new map. Note that our morphisms are indeed pairs of adjoint functors and not just their isomorphism classes. We need to assume that $\mathcal C$ is small to guarantee that each hom-set in $\Loc{\mathcal C}$ is indeed a set.

We can think of the monoid $\Adj{\mathcal C}$ as a single object category and denote the category by the same notation. There is an obvious faithful functor $\Loc{\mathcal C}\to\Adj{\mathcal C}$. From this viewpoint we may think of the local category of adjunctions as a ``resolution'' of the monoid of adjunctions.

As discussed at the end of the previous section, one could in fact construct the local category of endofunctors in a similar fashion to obtain a resolution of the monoid of all endofunctors of the category. Since adjunctions allow us to ``go back'' along the arrow, we cannot have two arrows with the same label and target but with distinct sources. Hence we restrict our attention only to the local category of adjunctions.

Locally in a hom-set $\Loc{\mathcal C}(C,D)$, the labels on parallel arrows do not really matter since their role is identical in the sense that each such $F$ satisfies $F\models C\preceq D$. This motivates us to take the preorder reflection of the category $\Loc{\mathcal C}$, which we denote by $\lambda(\mathcal C)$.

\subsection{Adjoint topology on a category}
Given $\mathcal C\in\Cat$, we define a collection of subsets of $\Ob\mathcal C$ that are closed under the operation of ``going forward'' along an adjunction.
\begin{equation*}
\tau:=\{U\subseteq\Ob\mathcal C\mid \forall C\in U \forall F:\mathcal C\to\mathcal C ((\exists F^*\dashv F\wedge C\in F^*(\mathcal C))\implies F(C)\in U)\}.
\end{equation*}
Each $U\in\tau$ is closed under isomorphisms. It can be easily checked that this set is closed under arbitrary unions and finite intersections. Hence $\tau$ defines a topology on $\Ob\mathcal C$; call it the \textbf{adjoint topology}. Moreover, the collection $\tau$ is also closed under arbitrary intersections. This means that $\tau$ is an Alexandroff topology on $\Ob\mathcal C$. Alexandroff topologies were introduced in \cite{Alex} by Alexandroff.

Steiner showed in \cite[Theorem~2.6]{Steiner} that an Alexandroff space can be thought of as a preorder which leads to a well-known equivalence of categories between the category $\Pre$ and the full subcategory of $\Top$ consisting of Alexandroff spaces, denoted $\Alx$ (also known as $A$-spaces). As a consequence we obtain a preorder, $\preceq$, known as the specialization preorder, on $\Ob\mathcal C$. The poset quotient $(\Ob\mathcal C,\preceq)/(\preceq\cap\succeq)$ corresponds to the $T_0$-quotient of the topological space $(\Ob\mathcal C,\tau)$.

We have not yet made clear how the above-defined topology distinguishes between left and right adjoints. Also, we would like to understand the preorder $\preceq$ in a better way. If we unravel the definition of an open set in $\tau$ and the categorical equivalence mentioned in the above paragraph, then we obtain the following definition of the preorder.
\begin{equation*}
C\preceq D \iff \exists F\in\Adj{\mathcal C}\mbox{ such that }F^*D=C\mbox{ and }F_*C=D.
\end{equation*}
This shows that it is not a coincidence that we use the same notation for two preorders on $\Ob{\mathcal C}$---one obtained from the local category of adjunctions and the other obtained from the adjoint topology---but they are in fact identical!

\subsection{Category of adjunctions}\label{catofadj}
Let $\mathrm{ADJ}$ denote the large category with pairs of adjoint functors between small categories, $F^*:\mathcal C'\rightleftarrows\mathcal C:F_*$, as objects and pairs of functors $(H:\mathcal D'\to\mathcal C',K:\mathcal D\to\mathcal C)$ as morphisms from $G^*:\mathcal D'\rightleftarrows\mathcal D:G_*$ to $F^*:\mathcal C'\rightleftarrows\mathcal C:F_*$ whenever $K\circ G^*\simeq F^*\circ H$, $H\circ G_*\simeq F_*\circ K$ and $H\circ\eta'=\eta\circ H$, where $\eta',\eta$ are the units of adjunctions. Equivalent characterizations of this category are discussed in \cite[\S IV.7]{MacLane}.

Choose $\mathcal D,\mathcal D'$ to be the terminal category $\mathbf 1$, $G^*\dashv G_*$ to be the identity adjunction, and $\mathcal C'=\mathcal C$. Further let $H,K$ pick out the objects $C',C\in\mathcal C$ respectively. Then $(C',C):\id{\mathbf 1}\to F$ is a morphism in $\mathrm{ADJ}$ if and only if $F\models C\preceq C'$.

This is perhaps the simplest way to define the preorder $\preceq$ on $\Ob\mathcal C$ but other ways give more intuition of why this definition is interesting from various perspectives.

The case when the domain of a morphism is not the identity adjunction is also important and will form the basis for the ``taxotopy theory'' we develop in this paper. After defining the taxotopy relation between functors in the next section, we will explain, from the homotopy theory point of view, why this binary relation can be thought of as a way to ``deform'' a functor into another in a ``continuous'' manner.

\subsection{Notation}
We need to set up some notation to work with posets throughout the rest of this paper.

The partial order relation in a poset $P$ will be written using the symbol $\leq$; the exceptions being fundamental posets, where the partial order is the taxotopy relation denoted by $\preceq$. We do not distinguish between the element of a preorder and its equivalence class in the posetal reflection.

Duality in the context of posets always refers to the functor $(-)^{op}:\Pos\to\Pos$ that takes a poset $P$ to the opposite poset $P^{op}$.

A subset $Q$ of a poset $P$ is a cutset if each maximal chain in $P$ intersects $Q$.

For elements $p,q\in P$, the upper set of $p$ is $p^\uparrow:=\{p'\in P:p\leq p'\}$ and the lower set of $q$ is $q^\downarrow:=\{q'\in P:q'\leq q\}$. If $p\leq q$ then the closed interval $[p,q]$ is $p^\uparrow\cap q^\downarrow$ and the open interval $(p,q)$ is $[p,q]\setminus\{p,q\}$.

The totally ordered sets of natural numbers and integers are denoted by $\mathbb N$ and $\mathbb Z$ respectively. We assume that $0\in\mathbb N$.

For positive $n\in\mathbb N$, the ordered set $0<1<\cdots<n-1$ is denoted by $\mathbf n$.

The notation $\sqcup$ denotes disjoint union. For positive $n\in\mathbb N$, the notation $n\cdotp P$ denotes the disjoint union of $n$ copies of $P$.

We will sometimes write the composition of two arrows in a category using juxtaposition; other times we will use $\circ$.

See \cite{Hatcher} and \cite{MacLane} for background in algebraic topology and category theory respectively.

\subsection*{Acknowledgments:} Both authors wish to express sincere thanks to Harold Simmons for his positive approach towards this project and for sharing innumerable discussions with us. Thanks also go to Mark Kambites for suggesting the study of $L(-)$ since $\mathbb Z$-chains in a poset are analogues of paths in a topological space. The first author also wishes to express thanks to Mike Prest for his generous support for research visits to Manchester.

The first author was funded by the Italian project FIRB 2010 and a doctoral scholarship by School of Mathematics, University of Manchester. The second author was funded by a University of Manchester Faculty of Engineering and Physical Sciences Dean's Award while studying at the University of Manchester.

\section{Taxotopy of monotone maps}
We only work with posets and monotone maps throughout the rest of this paper; in most cases the generalization to categories and functors is obvious via the category $\mathrm{ADJ}$ of Section \ref{catofadj}.

Let $P,Q$ be posets and let $h,k:P\to Q$ be any two monotone maps. Using intuition borrowed from homotopy theory of topological spaces, we would like to express when the map $k$ can be ``continuously'' deformed into the map $h$. As a consequence of whatever definition of deformation we give, we are clearly expecting the continuous function $Bk$ to be homotopic to $Bh$. The latter is an equivalence relation but the former need not be! In fact, we will define a preorder on the set $\Pos(P,Q)$ to describe the relation ``can be deformed into''.

All the discussion and motivation in the previous section was aimed at the following definition.
\begin{definition}
For $P,Q\in\Pos$ and $h,k\in\Pos(P,Q)$, we say that $k$ is \textbf{taxotopic} to $h$, written $k\preceq h$, if there are $f\in\Adj P$ and $g\in\Adj Q$ such that $k\circ f^*=g^*\circ h$ and $h\circ f_*=g_*\circ k$. We express this as $(f,g)\models k\preceq h$, read $f,g$ witness $k\preceq h$, if we want to emphasize the adjunctions.
\end{definition}
The binary relation $\preceq$ defined above is indeed a preorder for reflexivity follows from the fact that identity map is self-adjoint, and transitivity follows from the fact that composition of left (resp. right) adjoints is again a left (resp. right) adjoint.
\begin{rmk}
The term \textit{taxotopy} is derived from two Greek words, \textit{taxis} ($\tau\alpha\xi\iota\sigma$ in Greek) meaning \textit{order}, and \textit{topos} ($\tau o\pi o\sigma$) meaning \textit{space}, just as the word \textit{homotopy} is derived from the Greek words \textit{homos}, meaning \textit{similar}, and \textit{topos}. The credit for this name goes to Nathana\"{e}l Mariaule.
\end{rmk}

Observe that in the two commutative diagrams, either both left adjoints appear or both right adjoints. The direction of the taxotopy order is the same as the direction of the right adjoints. This sense of direction/order disappears when one passes to the classifying spaces $BP$ and $BQ$. There one observes that $Bh,Bk:BP\to BQ$ are homotopic as continuous maps. In this sense, taxotopy provides a way of classifying homotopy equivalent (monotone) maps. It should be noted that since we did not fix a point in any poset, we can only refer to the homotopy between paths. Moreover the posets, and hence their classifying spaces, could have more than one connected component.

Note the existential clause in the definition above, i.e., the taxotopy preorder depends on the existence of adjunctions on the two posets. Thus the relation $\preceq$ is not symmetric in general. Given $h,k\in\Pos(P,Q)$, if $h\preceq k$ and $k\preceq h$, then we say that $h$ and $k$ are \textbf{taxotopy equivalent} and express this as $h\approx k$.

\begin{rmk}
Recall that a homotopy between two paths $h,k:[0,1]\to X$ in a topological space $X$ is a continuous function $H:[0,1]^2\to X$ such that $H(t,0)=h(t)$ and $H(t,1)=k(t)$ for each $t\in[0,1]$. So there is square shaped picture in $X$ that represents continuous deformation of the bottom line $h$ into the top line $k$. A taxotopy between two monotone maps $h,k:P\to Q$ is also represented by a (bi-commutative) square representing deformation of the bottom map $h$ into the top map $k$. Thus in a very loose sense the taxotopy relation could be seen as a discrete ordered version of homotopy equivalence relation.
\end{rmk}

An adjunction on a poset is also known as a monotone Galois connection. (We shall never talk about antitone Galois connections and hence we drop the adjective monotone.) Given a Galois connection $f\in\Adj P$, by definition, we have $f_*f^*(p)\geq p$ and $f^*f_*(p)\leq p$ for each $p\in P$. The functions $f_*f^*:P\to P$ and $f^*f_*:P\to P$ are closure and interior operators respectively. The closure operator has $f_*(P)$ as its fixed set while the interior operator has $f^*(P)$ as its fixed set. Moreover, the adjunction induces a bijection $f^*:f_*(P)\rightleftarrows f^*(P):f_*$ between these fixed sets.

Let us try to understand the definition of taxotopy in a bit more detail. For $h,k:P\to Q$, the relation $k\preceq h$ describes that for some $f\in\Adj P$, $g\in\Adj Q$, the map $h$ restricts to a map $f_*P\to g_*Q$ between fixed sets of closure operators, and the map $k$ restricts to a map $f^*P\to g^*Q$ between fixed sets of interior operators. Moreover, since the fixed sets of closure and interior operators are isomorphic, the restrictions of $h$ and $k$ are the same functions up to these relabeling (i.e., isomorphisms). Since closure operators are extensive (i.e., they make elements bigger) while interior operators are intensive (i.e., they make elements smaller), the direction of the taxotopy order $k\preceq h$ is intuitive.

\section{Fundamental posets}
For posets $P,Q$, the hom-set $\Pos(P,Q)$ naturally acquires a partial order structure under pointwise order. We will mostly forget about this poset structure and use the notation to denote the underlying set.
\begin{definition}
The \textbf{fundamental poset} of the pair $\langle P,Q\rangle$, denoted $\Lambda(P,Q)$, is defined to be the posetal reflection of the taxotopy preorder $(\Pos(P,Q),\preceq)$.
\end{definition}
We will use the notation $\preceq$ to denote both the preorder and the partial order.

We are interested in three special instances of the fundamental poset; the notations are given below.
\begin{notation}
For $P\in\Pos$, $\lambda(P):=\Lambda(\mathbf 1,P),\ L(P):=\Lambda(\mathbb Z,P)$ and $\Lambda(P):=\Lambda(P,P)$.

The symbol $\mathord{\Diamond}$ denotes the poset consisting of four elements $\bot,a,b,\top$ related by $\bot<a,b<\top$.
\end{notation}

\begin{rmk}
For any $P\in\Pos$, the set $\Pos(\mathbf 1,P)$ is in bijection with $P$. Hence $\lambda(P)$ is precisely the poset given by the specialization order on the $T_0A$-space associated with the adjoint topology on $P$.
\end{rmk}

\begin{example}
We try to understand the poset $\lambda(\mathord{\Diamond})$. There is an evident automorphism of $\mathord{\Diamond}$ that swaps $a$ and $b$ and therefore $a\approx b$. Since right (resp. left) adjoint preserves limits (resp. colimits), it preserves the top element, $\top$ (resp. $\bot$). Hence $g^*(\bot)=\bot$ and $g_*(\top)=\top$ for all $g\in\Adj{\mathord{\Diamond}}$. This proves that $d\preceq\bot$ if and only if $d=\bot$ and $d'\geq\top$ if and only if $d'=\top$ for $d,d'\in\mathord{\Diamond}$. Existence of the adjunction $\bot:\mathord{\Diamond}\rightleftarrows\mathord{\Diamond}:\top$, where $\top$ and $\bot$ denote (by an abuse of notation) the constant maps taking those values, shows that $\bot\prec\top$.

Finally define $f\in\Adj{\mathord{\Diamond}}$ as $\langle\bot,a,b,\top\rangle\xrightarrow{f^*}\langle\bot,\bot,b,b\rangle$ and $\langle\bot,a,b,\top\rangle\xrightarrow{f_*}\langle a,a,\top,\top\rangle$. This adjunction furnishes the taxotopy relations $b\preceq\top$ and $\bot\preceq a$. We have already seen that these relations are strict. Hence $\lambda(\mathord{\Diamond})=\mathbf 3$ given by $\bot\prec a\approx b\prec\top$.
\end{example}

We will show in Section \ref{cone} that this is a special case of a much general result about posets with top and bottom elements.

It is interesting to see how Galois connections interact with the taxotopy preorder. Suppose $f\in\Adj P$. Then we can't say much about the maps $f^*$ and $f_*$ themselves but the closure and interior operators on $P$ induced by this adjunction are taxotopy related in a quite intuitive manner. In fact, if $\id P$ denotes the identity function on $P$, then $f^*f_*\preceq\id P\preceq f_*f^*$ in $\Lambda(P)$. In case $P$ has top $\top$ and bottom $\bot$ one always has an adjunction $\bot\dashv\top$, and hence $\bot\preceq\id P\preceq\top$.

\begin{rmk}\label{lambdaembeds}
Let $P,Q$ be any two posets. An element $p\in P$ can be identified with the constant map $p:Q\to P$ taking value $p$. This produces an embedding $P\to\Pos(Q,P)$. This embedding is well-behaved with respect to the taxotopy order. If $p_1\preceq p_2$ in $\lambda(P)$, then the same holds true of the constant maps with those values in $\Lambda(Q,P)$. For this last statement, we use the identity Galois connection on $Q$ and the same adjunction on $P$ that witnesses the relation in $\lambda(P)$. Moreover this relation is also reflected for one can always choose the identity adjunction on $Q$ to witness a taxotopy relation between constant maps. Hence there is an order embedding $\lambda(P)\rightarrowtail\Lambda(Q,P)$ for each $P,Q$. In particular there are order embeddings $\lambda(P)\rightarrowtail\Lambda(P)$ and $\lambda(P)\rightarrowtail L(P)$. From the discussion in the paragraph above, if $P$ has $\top$ and $\bot$, then one gets $\bot\preceq\top$ in $\lambda(P)$ by reflecting this relation in $\Lambda(P)$.
\end{rmk}

\section{Taxotopy of chains}
The totally ordered set $\mathbb Z$ of integers has many interesting properties. The image of a map $\mathbb Z\to P$ is a chain in $P$. A chain looks like the correct (discrete) order-theoretic analogue of a path in a topological space. A path is the fundamental object of study in homotopy theory of topological spaces which motivated us to study $L(P)$. We will show that some of the homotopy-theoretic properties of paths are similar to the taxotopy-theoretic properties of chains in posets.

First we characterize adjunctions on $\mathbb Z$.
\begin{lemma}\label{adjZ}
The following are equivalent for a monotone map $h:\mathbb Z\to\mathbb Z$.
\begin{enumerate}
    \item The map $h$ can be factorized as $h=h_ih_s$, where $h_i:\mathbb Z\rightarrowtail\mathbb Z$ is an injection and $h_s:\mathbb Z\twoheadrightarrow\mathbb Z$ is a surjection.
    \item There is an isomorphism between $\im h$ and $\mathbb Z$.
    \item For each $n\in\im h$, we have $|h^{-1}(n)|<\infty$.
    \item There is a left adjoint $h^*$ for $h$.
    \item There is a right adjoint $h_*$ for $h$.
\end{enumerate}
\end{lemma}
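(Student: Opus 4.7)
The plan is to reduce everything to a single structural observation about monotone (hence non-decreasing) maps $h:\mathbb Z\to\mathbb Z$: every fiber $h^{-1}(n)$ is finite iff $h$ is unbounded both above and below. Indeed, each fiber is a convex subset of $\mathbb Z$ by monotonicity, hence an interval; an infinite interval is unbounded to one side and forces $h$ to be eventually constant in that direction, so bounded there. Conversely, if $h$ is bounded above (resp.\ below), the non-empty bounded subset $h(\mathbb Z)$ of $\mathbb Z$ attains its supremum (resp.\ infimum), and monotonicity propagates equality to give an infinite fiber over that extremum.

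With this lemma in hand, (2) $\Leftrightarrow$ (3) is immediate, since a subset of $\mathbb Z$ is order-isomorphic to $\mathbb Z$ precisely when it is unbounded in both directions, and $\im{h}$ inherits its (un)boundedness from $h$. For (1) $\Leftrightarrow$ (2), a factorization $h=h_ih_s$ identifies $\im{h}$ with the image of the monotone injection $h_i:\mathbb Z\rightarrowtail\mathbb Z$, which is order-isomorphic to $\mathbb Z$; conversely, any order isomorphism $\phi:\mathbb Z\to\im{h}$ lets me factor $h$ as the surjection $\phi^{-1}\circ h:\mathbb Z\twoheadrightarrow\mathbb Z$ followed by the injection $\iota\circ\phi:\mathbb Z\rightarrowtail\mathbb Z$, where $\iota$ denotes inclusion.

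For (3) $\Rightarrow$ (4) I would propose the natural candidate
\[
h^*(n) := \min\{m\in\mathbb Z : h(m)\geq n\}.
\]
The set on the right is an upper set of $\mathbb Z$ by monotonicity; finite fibers make it non-empty (since $h$ is unbounded above) and proper (since $h$ is unbounded below), hence of the form $[k,\infty)$, so the minimum exists. The adjunction identity $h^*(n)\leq m \iff n\leq h(m)$ is then routine from the definition. Conversely, if a left adjoint $h^*$ exists then $\min\{m:h(m)\geq n\}$ exists for every $n$, forcing this set to be a non-empty proper upper set for all $n$; letting $n$ vary shows $h$ is unbounded in both directions, so (3) holds. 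Finally (3) $\Leftrightarrow$ (5) follows either by the mirror construction $h_*(n):=\max\{m:h(m)\leq n\}$ or, more conceptually, by conjugating with the order-reversing involution $n\mapsto -n$, which swaps left and right adjoints while preserving condition (3).

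The only delicate point is the first-paragraph observation, and in particular resisting the temptation to imagine a bounded monotone $h:\mathbb Z\to\mathbb Z$ with finite fibers; the discreteness of $\mathbb Z$ (every bounded non-empty subset has an extremum) is exactly what prevents this and makes all five conditions collapse to the single statement ``$h$ is unbounded in both directions''.
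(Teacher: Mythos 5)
Your proof is correct, but it is organized differently from the paper's. You make the single criterion ``$h$ is unbounded above and below'' the hub through which all five conditions pass, and you construct the left adjoint directly by the formula $h^*(n)=\min\{m\in\mathbb Z:h(m)\geq n\}$ (dually for the right adjoint), observing moreover that any left adjoint is \emph{forced} to be given by this formula, which yields $(4)\Rightarrow(3)$ at no extra cost. The paper instead treats $(1)\Leftrightarrow(2)\Leftrightarrow(3)$ as immediate, obtains $(1)\Rightarrow(4)$ and $(1)\Rightarrow(5)$ by factorizing $h=h_ih_s$ and exhibiting separate adjoints for the surjective and injective factors (e.g.\ $h_s^*(n)=\min h_s^{-1}(n)$ and $h_i^*(n)=\min h_i^{-1}[n,\infty)$), and proves $(4)\Rightarrow(2)$ by a fixed-set argument: the image of the closure operator $hh^*$ is unbounded above, the image of the interior operator $h^*h$ is unbounded below, and the two fixed sets are order-isomorphic, so $\im{h}$ is unbounded in both directions. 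Your route is more economical and self-contained for the lemma itself, and your convexity-of-fibers observation cleanly justifies the equivalences the paper waves through as ``immediate.'' What the paper's factor-by-factor construction buys is reusability: the explicit adjoints of the injective and surjective parts are invoked again later (for instance in the remark following Theorem \ref{LZ}, where adjunctions such as $k_s^*h_s\dashv h_{s*}k_s$ are assembled from them), so the decomposition in $(1)$ is doing work beyond this lemma. Both arguments are sound.
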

\begin{proof}
The equivalences $(1)\Leftrightarrow(2)\Leftrightarrow(3)$ are immediate.\\
To see that $(1)\Rightarrow(4)$ (resp. $(1)\Rightarrow(5)$), we construct left (resp. right) adjoint for each injective and surjective monotone map in $\Pos(\mathbb Z,\mathbb Z)$. If $h_s:\mathbb Z\twoheadrightarrow\mathbb Z$ is a surjection, then for each $n\in\mathbb Z$, we define $h_s^*(n):=\min h_s^{-1}(n)$ (resp. $h_{s*}(n):=\max h_s^{-1}(n)$). Then $h_s^*$ (resp. $h_{s*}$) is an injective monotone map which indeed is a left (resp. right) adjoint for $h_s$. On the other hand, for an injective monotone map $h_i:
\mathbb Z\rightarrowtail\mathbb Z$ we define the surjective map $h_i^*(n):=\min h_i^{-1}[n,\infty)$ (resp. $h_{i*}(n):=\max h_i^{-1}(-\infty,n]$) which is the left (resp. right) adjoint for $h_i$.\\
Finally we show that $(4)\Rightarrow(2)$. The proof of $(5)\Rightarrow(2)$ is analogous (but dual). If $h^*\dashv h$, then $hh^*\geq\id{\mathbb Z}$ and $h^*h\leq\id{\mathbb Z}$. This implies that the image of the closure operator is unbounded above while the image of the interior operator is bounded below. Since these two images are in bijection with each other, we conclude that $\im h=\im{hh^*}$ is unbounded in both directions.
\end{proof}

\begin{theorem}\label{LZ}
$\lambda(\mathbb Z)=\mathbf 1$ and $\Lambda(\mathbb Z)=L(\mathbb Z)=5\cdotp\mathbf 1$.
\end{theorem}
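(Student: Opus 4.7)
The equality $L(\mathbb Z)=\Lambda(\mathbb Z)$ is tautological since both denote $\Lambda(\mathbb Z,\mathbb Z)$. For $\lambda(\mathbb Z)=\mathbf 1$, I would use the bijection $\Pos(\mathbf 1,\mathbb Z)\cong\mathbb Z$: given $m,n\in\mathbb Z$, the translation adjunction $(g^*,g_*)=(k\mapsto k+m-n,\,k\mapsto k+n-m)$ on $\mathbb Z$, paired with $\id{\mathbf 1}$, witnesses $m\preceq n$; the dual translation witnesses $n\preceq m$, so all integers collapse into a single taxotopy class.

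For $\Lambda(\mathbb Z)=5\cdotp\mathbf 1$, the plan is to partition $\Pos(\mathbb Z,\mathbb Z)$ into five classes according to the image-boundedness profile of $h$: (I) $|\im h|=1$; (II) $\im h$ finite with $|\im h|\geq 2$; (III) $\im h$ infinite and bounded above but not below; (IV) $\im h$ infinite and bounded below but not above; (V) $\im h$ unbounded in both directions. By Lemma \ref{adjZ}, Type (V) is precisely $\Adj{\mathbb Z}$. I would then show (a) these five classes are pairwise $\preceq$-incomparable and (b) within each class any two maps are taxotopy equivalent.

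For (a), the central observation is that every $f\in\Adj{\mathbb Z}$ has both $f^*$ and $f_*$ with image isomorphic to $\mathbb Z$ (Lemma \ref{adjZ}), hence unbounded in both directions. Applied to the witnessing equations $kf^*=g^*h$ and $hf_*=g_*k$ for $k\preceq h$, this forces $\sup k=+\infty$ if and only if $\sup h=+\infty$, and dually for $\inf$; so the four boundedness profiles corresponding to Types $\{$I,II$\}$, (III), (IV), (V) are $\preceq$-invariants. To separate (I) from (II) within ``bounded both ways'', note that if $c\preceq h$ with $c$ constant and $h$ of Type (II), then $hf_*=g_*c$ would have to be constant; but since $f_*$ has unbounded-both-ways image, $hf_*$ attains both $\min\im h$ and $\max\im h$, a contradiction. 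The dual argument rules out $h\preceq c$.

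For (b), Type (I) is handled by the translation argument above. In Type (V), for any $h\in\Adj{\mathbb Z}$ with factorization $h=h_ih_s$ (Lemma \ref{adjZ}), I would take $f=(h_s^*,h_s)$ and $g=(h_i,h_{i*})$ and use the identities $h_sh_s^*=\id{\mathbb Z}$ and $h_{i*}h_i=\id{\mathbb Z}$ to reduce both taxotopy equations to $h_i=h_i$ and $h_s=h_s$, giving $h\preceq\id{\mathbb Z}$; dually, $f=(h_s,h_{s*})$ and $g=(h_i^*,h_i)$ yield $\id{\mathbb Z}\preceq h$. For Types (II), (III), (IV), fix a canonical representative (a single-jump step function; the clip $n\mapsto\min(n,0)$; and $n\mapsto\max(n,0)$, respectively) and align any other map in the class to the canonical one by constructing $f^*$ (or $f_*$) that ``skips over'' the finite intermediate level sets of the map while $g^*$ and $g_*$ collapse the corresponding values in the target. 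The main obstacle is the simultaneous bookkeeping of both taxotopy equations together with the constraint that every $f^*,f_*,g^*,g_*$ has image isomorphic to $\mathbb Z$; this remains feasible because any unbounded-both-ways subchain of $\mathbb Z$ can be realised as the image of a left (or right) adjoint on $\mathbb Z$.
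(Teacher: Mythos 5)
Your proposal is correct and follows essentially the same route as the paper: the same five-way classification of $\Pos(\mathbb Z,\mathbb Z)$ by image type, the same use of Lemma \ref{adjZ} to show that the boundedness profile (and constancy) is a taxotopy invariant, and explicit adjunction constructions for within-class equivalence, with the finite-image cases sketched at the same level of detail as the paper's own proof. Your factorization argument for the unbounded-both-ways class via $h=h_ih_s$ is exactly the alternative the paper records in the remark immediately following the theorem.
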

\begin{proof}
For each $k\in\mathbb Z$, the function $\mathbb Z\xrightarrow{(-)+k}\mathbb Z$ is an automorphism of $\mathbb Z$. Hence $n\preceq n+k$ for each $n,k\in\mathbb Z$. This shows that $\lambda(\mathbb Z)=\mathbf 1$.

For the second statement, observe that each map in $\Pos(\mathbb Z,\mathbb Z)$ falls into precisely one of the following five classes depending on the nature of its image: singleton, bounded but not singleton, bounded below but unbounded above, bounded above but unbounded below, unbounded in both directions. The inverse image of each element in the image is an interval and hence there are precisely five different types of partitions of $\mathbb Z$ into disjoint nonempty intervals. The ordered set of intervals in any such partitions is order isomorphic to precisely one of the following: singleton, finite total order of size bigger than $1$, $\mathbb N$, $\mathbb N^{op}$,$\mathbb Z$. We will show that all maps in each of these classes are taxotopy equivalent, and that there are no taxotopy relations between maps belonging to different classes.

Given $h,k:\mathbb Z\to\mathbb Z$, the relation $k\preceq h$ implies that for some adjunctions $f,g$ on $\mathbb Z$, the restrictions $h:\im{f_*}\to\im{g_*}$ and $k:\im{f^*}\to\im{g^*}$ are isomorphic. Above lemma showed that the image of any closure or interior operator on $\mathbb Z$ induced by a Galois connection is isomorphic to $\mathbb Z$. Hence we conclude that both $h$ and $k$ have images in the following four classes: finite, unbounded below but bounded above, unbounded above but bounded below, unbounded in both directions.

Now we classify the maps with finite image into two classes. Suppose both $h$ and $k$ have finite images. If $|\im h|>1$ and $|\im k|=1$, then for any $f,g\in\Pos(\mathbb Z,\mathbb Z)$ admitting adjoints, we have $|\im{gh}|>1$ and $|\im{kf}|=1$. Hence $kf\neq hg$. This shows that there are no taxotopy relations between a constant map and a non-constant map with finite image. This completes the proof that there are at least five connected components of the poset $\Lambda(\mathbb Z)$.

Now we need to show that two maps in the same class as discussed above are taxotopy equivalent. There are five different cases and we have dealt with the case of constant maps when computing $\lambda(\mathbb Z)$. In each of the remaining cases we indicate the adjunctions $f$ and $g$ on the domain and the codomain respectively, but leave the verification of the taxotopy relation $h\preceq k$ to the reader.

Suppose both $h,k$ have finite images. Let $n_1=\max h^{-1}(\min\im h)$, $n_2=\min h^{-1}(\max\im h)$, $m_1=\max k^{-1}(\min\im k)$ and $m_2=\min k^{-1}(\max\im k)$. Then $n_1<n_2$ and $m_1<m_2$. To define the Galois connection $f$, set up order isomorphisms $(-\infty,n_1]\cong(-\infty,m_1]$ and $[n_2,\infty)\cong[m_2,\infty)$. Send the interval $(n_1,n_2)$ to $m_1$ by $f^*$ while send the interval $(m_1,m_2)$ to $n_2$ by $f_*$. Define $g$ similarly by replacing $n_1,n_2,m_1,m_2$ by $\min\im h, \max\im h, \min\im k, \max\im k$ respectively.

Suppose both $h,k$ have images isomorphic to $\mathbb Z$. Choose order isomorphisms $\im h\xrightarrow{\phi}\mathbb Z\xrightarrow{\psi}\im k$. The order isomorphism $\psi\circ\phi$ can be extended to a Galois connection on $\mathbb Z$ as follows. For two consecutive elements $n_1<n_2$ in $\im h$, send the interval $(n_1,n_2)$ to $\psi\circ\phi(n_1)$ under $g^*$. Similarly for two consecutive elements $m_1<m_2$ in $\im k$, send the interval $(m_1,m_2)$ to $(\psi\circ\phi)^{-1}(m_2)$ under $g_*$. Define $f^*(n):=\min k^{-1}(\psi\circ\phi(h(n)))$ and $f_*(m):=\max h^{-1}((\psi\circ\phi)^{-1}(k(m)))$.

In the remaining two cases, we use an appropriate combination of the above two constructions of Galois connections. This completes the proof that $\Lambda(\mathbb Z)=L(\mathbb Z)=5\cdotp\mathbf 1$.
\end{proof}

\begin{rmk}
If $h,k\in\Pos(\mathbb Z,\mathbb Z)$ have images isomorphic to $\mathbb Z$, then there is another easy way of showing $k\preceq h$. Lemma \ref{adjZ} allows us to write $h=h_ih_s$ and $k=k_ik_s$, where $h_i,k_i$ are injective maps while $h_s,k_s$ are surjective maps in $\Pos(\mathbb Z,\mathbb Z)$ and guarantees the existence of both adjoints for each of them. Choose the adjunction $k_s^*h_s\dashv h_{s*}k_s$ on the domain while $k_ih_i^*\dashv h_ik_{i*}$ on the codomain to witness the required taxotopy relation.
\end{rmk}

Now we show that for any $P\in\Pos$ the poset $L(P)$ depends on the chains of $P$, i.e., $L(P)$ is internal to $P$.
\begin{cor}\label{Lisinternal}
Suppose $h,k\in\Pos(\mathbb Z,P)$ satisfy $\im h=\im k$. Then $h\approx k$.
\end{cor}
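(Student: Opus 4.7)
The plan is to witness $k \preceq h$ by taking the codomain adjunction to be the trivial $g = (\id{P} \dashv \id{P}) \in \Adj{P}$ and constructing an explicit $f \in \Adj{\mathbb Z}$ satisfying $k \circ f^* = h$ and $h \circ f_* = k$. Because this construction will be symmetric in $h$ and $k$, swapping their roles yields a witness for $h \preceq k$, and hence $h \approx k$.

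Write $C := \im h = \im k$. Since $h$ is monotone and surjective onto $C$, each preimage $h^{-1}(c)$ is a nonempty interval of $\mathbb Z$; it is unbounded below precisely when $c$ is a minimum of $C$, unbounded above precisely when $c$ is a maximum of $C$, and otherwise finite. The same is true for $k$. I will define $f^* \in \Pos(\mathbb Z,\mathbb Z)$ piecewise on the partition of $\mathbb Z$ induced by $h$: on the unbounded tail $h^{-1}(\min C)$, if it exists, put a translation onto $k^{-1}(\min C)$; on $h^{-1}(\max C)$, if it exists, put a translation onto $k^{-1}(\max C)$; on each finite interior preimage $h^{-1}(c)$, send every element to the single point $\min k^{-1}(c) \in k^{-1}(c)$. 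Then $f^*$ is monotone and satisfies $k \circ f^* = h$ by construction.

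The image of $f^*$ comprises the (possibly empty) tails $k^{-1}(\min C)$ and $k^{-1}(\max C)$ together with the strictly increasing discrete collection $\{\min k^{-1}(c) : c \text{ interior in } C\}$. Running through the five possible order types of $C$ (trivially disposing of the singleton case where $h = k$), this subset of $\mathbb Z$ is unbounded in both directions, hence order isomorphic to $\mathbb Z$. Lemma \ref{adjZ} then produces a right adjoint $f_*$ for $f^*$, so $f \in \Adj{\mathbb Z}$. Using the formula $f_*(m) = \max\{n : f^*(n) \leq m\}$, a straightforward case check (depending on whether $m$ lies in a boundary or interior preimage of $k$) shows $f_*(m) \in h^{-1}(k(m))$, whence $h \circ f_* = k$. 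This establishes $(f, \id{P}) \models k \preceq h$.

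The step I expect to be the main obstacle is verifying the hypothesis $\im f^* \cong \mathbb Z$ demanded by Lemma \ref{adjZ}: the naive recipe that collapses every $h^{-1}(c)$ to $\min k^{-1}(c)$ yields $\im f^* \cong C$, which has the correct order type only when $C \cong \mathbb Z$. The remedy of mapping the unbounded tails of $h$ bijectively onto their $k$-counterparts—rather than collapsing them—supplies precisely the half-lines needed to render $\im f^*$ isomorphic to $\mathbb Z$ uniformly across all five cases.
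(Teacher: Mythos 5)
Your proposal is correct and follows essentially the same route as the paper: the paper likewise fixes the identity Galois connection on $P$ and builds the domain adjunction on $\mathbb Z$ according to the five order types of the common image, reusing the constructions from the proof of Theorem \ref{LZ} (translations/order isomorphisms on the unbounded tails, collapse of finite interior preimages). Your explicit verification that $\im{f^*}\cong\mathbb Z$ and that $h\circ f_*=k$ supplies exactly the details the paper leaves to the reader.
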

\begin{proof}
If $\im h=\im k$ for $h,k\in\Pos(\mathbb Z,P)$, then we will only show that $h\preceq k$; taxotopy equivalence will follow by symmetry.

Choose the identity Galois connection on $P$. It only remains to identify the adjunction on $\mathbb Z$. As we observed in the proof of the above theorem, there are five different possibilities based on the order structure of the chain $\im h=\im k$. The adjunctions we constructed on the domain in five different cases work in this case as well. Verification of the details is left to the reader.
\end{proof}

This corollary shows that the speed with which a chain is traced is irrelevant as far as taxotopy is concerned; analogous statement in homotopy theory states that the speed with which a path is traced is irrelevant for homotopy.

If $P$ is a finite poset, then every chain in $P$ is finite. Let $d:=d(P)$ denote the maximum number of distinct elements which can appear in a chain in $P$. Call it the \textbf{height} of $P$. We have seen in Remark \ref{lambdaembeds} that $\lambda(P)$ embeds into $L(P)$. The following result simplifies our job of checking taxotopy relations by converting the domain into a finite total order.
\begin{pro}\label{inftofin}
Suppose $P$ is a finite poset with height $d$.
\begin{itemize}
    \item There are no taxotopy relations between constant and non-constant chains in $P$.
    \item Suppose $h,k:\mathbb Z\to P$ are non-constant maps such that $k\preceq h$. Then there are $h',k'\in\Pos(\mathbf d,P)$ with $\im h=\im{h'},\im k=\im{k'}$ such that $k'\preceq h'$ in $\Lambda(\mathbf d,P)$ where the latter taxotopy relation is witnessed by an adjunction on $\mathbf d$ where both adjoints restrict to identity on $\{0,d-1\}$.
\end{itemize}
\end{pro}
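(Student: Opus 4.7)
For the first bullet, the plan is to pit the unboundedness of $\im{f^*}$ and $\im{f_*}$ (via Lemma \ref{adjZ}) against the finiteness of a constant image. If $h\equiv p$ and $(f,g)\models k\preceq h$, then $k\circ f^*=g^*\circ h$ is the constant map at $g^*(p)$, so $k$ is constant on $\im{f^*}$. Since $\im{f^*}\cong\mathbb Z$ is unbounded in both directions in $\mathbb Z$ and $k$ is monotone, sandwiching any integer $n$ between elements of $\im{f^*}$ forces $k(n)=g^*(p)$, so $k$ is constant on all of $\mathbb Z$, contradicting non-constancy. The case where $k$ is constant and $h$ is not is handled dually using $h\circ f_*=g_*\circ k$ and $\im{f_*}\cong\mathbb Z$.

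For the second bullet, the plan is to compress the taxotopy data from $\mathbb Z$ down to $\mathbf d$ in two stages. Write $\im h=\{p_0<\cdots<p_r\}$ and $\im k=\{q_0<\cdots<q_s\}$ with $r+1,s+1\leq d$. First, I would extract a chain-level adjunction from $(f,g)$. The commutative squares force $g^*(\im h)\subseteq\im k$ and $g_*(\im k)\subseteq\im h$, because the left-hand sides $k\circ f^*$ and $h\circ f_*$ take values in $\im k$ and $\im h$ respectively. Hence $g^*$ and $g_*$ restrict to maps $c:\im h\to\im k$ and $d:\im k\to\im h$, and the adjointness $g^*\dashv g_*$ restricts verbatim to an adjunction $c\dashv d$ between the finite chains $\im h$ and $\im k$. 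Evaluating $k\circ f^*=g^*\circ h$ and $h\circ f_*=g_*\circ k$ at integers driven toward $\pm\infty$ (using that $\im{f^*},\im{f_*}$ are unbounded in $\mathbb Z$) pins down the boundary values $c(p_0)=q_0$, $c(p_r)=q_s$, $d(q_0)=p_0$, and $d(q_s)=p_r$.

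The second stage realizes $c\dashv d$ as an adjunction on $\mathbf d$. Pick consecutive interval partitions $\mathbf d=I_0\sqcup\cdots\sqcup I_r$ and $\mathbf d=J_0\sqcup\cdots\sqcup J_s$ with nonempty parts (possible since $r+1,s+1\leq d$), and define $h'(i)=p_j$ for $i\in I_j$ and $k'(i)=q_j$ for $i\in J_j$; then $\im{h'}=\im h$ and $\im{k'}=\im k$. Define $f^{\prime *}:\mathbf d\to\mathbf d$ by sending each $I_j$ monotonically into $J_{c(j)}$, and $f'_*$ by sending each $J_{j'}$ monotonically into $I_{d(j')}$. The boundary relations $c(0)=0$, $c(r)=s$, $d(0)=0$, $d(s)=r$ guarantee that $0\in I_0$ can be sent to $0\in J_0$ and $d-1\in I_r$ to $d-1\in J_s$ (and symmetrically for $f'_*$), so both adjoints are arranged to fix $\{0,d-1\}$. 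With $g'=g$, the two commutative squares then hold by construction, because $k'$ is constant $q_{c(j)}$ on $J_{c(j)}$ and $g^*h'$ is constant $c(p_j)=q_{c(j)}$ on $I_j$, and dually.

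The principal obstacle is to ensure that $f^{\prime *}\dashv f'_*$ genuinely forms an adjunction on $\mathbf d$: small examples show that a naive partition need not work, because the inequalities $d\circ c\geq\id{\im h}$ and $c\circ d\leq\id{\im k}$ of the chain adjunction must be ``lifted'' to the true adjoint inequalities $f'_*\circ f^{\prime *}\geq\id{\mathbf d}$ and $f^{\prime *}\circ f'_*\leq\id{\mathbf d}$. I would handle this by choosing the plateau sizes $|I_j|$ and $|J_{j'}|$ inductively and in coordination: whenever $d(c(j))>j$, the extra ``slack'' is deposited into $I_{d(c(j))}$ rather than $I_j$, and symmetrically for $J$. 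The hypothesis $r+1,s+1\leq d$ provides exactly the ``budget'' required to accommodate both partitions with the necessary slack, so the construction can always be completed.
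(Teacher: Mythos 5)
Your first bullet is fine and matches the paper's argument: a constant map forces the other map to be constant on the image of $f^*$ (or $f_*$), which is unbounded in both directions by Lemma \ref{adjZ}, hence constant everywhere. Stage one of your second bullet is also sound: the squares do force $g^*(\im h)\subseteq\im k$ and $g_*(\im k)\subseteq\im h$, the restricted pair is an adjunction between the finite chains, and driving $n\to\pm\infty$ pins down the boundary values. The gap is entirely in stage two, and it is not a small one: constructing an adjunction $f'^*\dashv f'_*$ on $\mathbf d$ that fixes $\{0,d-1\}$ and is compatible with the interval partitions is the whole content of the second bullet, and your ``deposit the slack into $I_{d(c(j))}$'' heuristic is not a proof. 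Naive placements demonstrably fail. For instance, with $I_i=\{i\}$ for $i<r$, $I_r=[r,d-1]$, $J_j=\{j\}$ for $j<s$, $J_s=[s,d-1]$ and $r>s$, the requirement $f'_*(J_s)\subseteq I_r$ forces $f'_*(s)\geq r$, while $f'^*(d-1)=d-1$ forces $d-1\in\im{f'^*}$, and then $f'^*f'_*(s)\leq s<d-1$ shows $f'_*(s)$ is strictly below the top of $\im{f'_*}$, which is incompatible with $f'_*(s)\in I_r$ unless $\min I_r$ is placed low enough; dually, if $\im h$ is a maximal chain ($r=d-1$) one is forced to take $J_s=\{d-1\}$ and put the slack in a \emph{lower} block of the $J$-partition, which your rule does not predict. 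So the correct slack placement depends delicately on $c$, $d$ and on both partitions simultaneously, and the assertion that ``the budget $r+1,s+1\leq d$ suffices'' is exactly the claim that needs proving.

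The paper avoids this reconstruction problem altogether. It first uses Corollary \ref{Lisinternal} to replace $h,k$ by canonical-form maps $h',k'$ on $\mathbb Z$ with the same images (constant on $(-\infty,0]$ and on $[d-1,\infty)$, with the slack in the second-from-top plateau), so that $k'\preceq h'$ is still witnessed by \emph{some} adjunction $f$ on $\mathbb Z$. It then proves $f^*(0),f_*(0)\leq 0$ and $f^*(d-1),f_*(d-1)\geq d-1$ from the commutative squares, clamps $f$ to an adjunction $f'$ that is the identity on $(-\infty,0]\cup[d-1,\infty)$, and restricts everything to $\mathbf d$. Adjointness and the commutativity of the squares are thereby \emph{inherited} from the existing witness on $\mathbb Z$ rather than rebuilt from the chain-level data. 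If you want to salvage your route, you should either carry out the combinatorial construction of the $\mathbf d$-adjunction in full (specifying $\im{f'^*}$ and $\im{f'_*}$ as order-isomorphic subsets containing $0$ and $d-1$ and verifying all interval conditions), or, more economically, keep the domain adjunction you already have on $\mathbb Z$ and modify it near the boundary as the paper does.
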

\begin{proof}
The first statement is already proven for maps in $\Pos(\mathbb Z,\mathbb Z)$ with finite images in the proof of Theorem \ref{LZ}. The same argument works in this case.

For the second statement, let $h,k\in\Pos(\mathbb Z,P)$ be non-constant. We use the freedom given by Corollary \ref{Lisinternal} to modify the domain of $h$ to a canonical form keeping the image same. Let $p_0<p_1<\cdots<p_{n-1}$ be the chain given by $\im h$, where $2\leq n\leq d$. Then define the function $h':\mathbb Z\to P$ as follows: $(-\infty,0]\mapsto p_0$, $[d-1,\infty)\mapsto p_{n-1}$, $i\mapsto p_i$ for each $1\leq i\leq n-3$, $[n-2,d-2]\mapsto p_{n-2}$. Similarly, starting from $k$, we obtain the map $k'$ with canonical form. Corollary \ref{Lisinternal} implies that $k'\preceq h'$.

Suppose $f\in\Adj{\mathbb Z}$ and $g\in\Adj P$ witness $k'\preceq h'$. We claim that $f^*(0)\leq 0$. We know from Lemma \ref{adjZ} that $f^*(n)\leq 0$ for infinitely many values of $n$. Let $n_0<0$ be one such value. Now $k'f^*(0)=g^*h'(0)=g^*h'(n_0)=k'f^*(n_0)=k'(0)$. Hence the claim. Similarly we can show that $f_*(0)\leq 0$, $f^*(d-1)\geq d-1$ and $f_*(d-1)\geq d-1$.

Define the map $f'^*\in\Pos(\mathbb Z,P)$ by $f'^*\mathord{\mid}_{(-\infty,0]\cup[d-1,\infty)}=\id{(-\infty,0]\cup[d-1,\infty)}$, $f'^*(n)=f^*(n)$ for $n\in (f^*)^{-1}(0,d-1)$, $f'^*(n)=0$ for $0<n<d-1$ if $f^*(n)\leq 0$ and $f'^*(n)=d-1$ for $0<n<d-1$ if $f^*(n)\geq d-1$. There is a unique right adjoint $f'_*$ for $f'^*$ and it can be checked easily that if $0<f_*(n)<d-1$ then $f'_*(n)=f_*(n)$. We can replace the Galois connection $f$ by $f'$ in the taxotopy square for $k'\preceq h'$ maintaining the commutativity of the taxotopy square.

Finally observe that the maps $h',k'$ restricted to domain $\mathbf d$ have the same images as $h',k'$ respectively. Also the Galois connection $f'$ restricts to a Galois connection on $\mathbf d$ where both restricted adjoints further restrict to the identity map on $\{0,d-1\}$.
\end{proof}

\section{The cone construction}\label{cone}
In the homotopy theory of topological spaces, a contractible space provides an example of a space with ``trivial'' homotopy. Given any topological space $X$, the construction of the cone, $Cone(X)$, is a canonical way of embedding $X$ into a contractible space.

In this section we provide a large class of examples of posets, namely bounded posets, with simple fundamental posets. We will show in Section \ref{taxpos} all bounded posets are null-taxotopic, i.e., they are ``trivial'' with respect to taxotopy.

Following topological intuition we define the cone over a poset.
\begin{definition}
Suppose $P\in\Pos$. The \textbf{cone} over $P$ is a poset obtained from $P$ by adjoining a top and a bottom element. More precisely, we choose two elements $\top,\bot\notin P$. The cone $CP$ has $P\cup\{\top,\bot\}$ as the underlying set, admits an order embedding $P\rightarrowtail CP$, and has relations $\bot<p<\top$ for each $p\in P$.
\end{definition}

\begin{lemma}
Let $a_0<a_1<\cdots<a_{d-1}=\top$ and $\bot=b_0<b_1<\cdots<b_{d-1}$ be two chains in $CP$, where $d\geq 2$. Then there is $f\in\Adj{CP}$ such that for each $0\leq i\leq d-1$ we have $f\models b_i\preceq a_i$.
\end{lemma}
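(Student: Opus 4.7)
The plan is to construct the required $f \in \Adj{CP}$ explicitly, defining its left adjoint in terms of the $a$-chain and then verifying that the corresponding right adjoint exists.

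For the left adjoint, I would set
\[f^*(p) := b_{\alpha(p)}, \qquad \alpha(p) := \min\{\,i : p \leq a_i\,\}.\]
The minimum is well-defined because $a_{d-1} = \top$ ensures $\{i : p \leq a_i\}$ is non-empty for every $p \in CP$. Monotonicity of $f^*$ is immediate (if $p \leq p'$, then $\{i : p' \leq a_i\} \subseteq \{i : p \leq a_i\}$, so $\alpha(p) \leq \alpha(p')$), and since $\alpha(a_i) = i$ we obtain $f^*(a_i) = b_i$ for each $i$.

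For the right adjoint I would invoke the standard poset criterion: $f^*$ admits a right adjoint iff the set $\{p : f^*(p) \leq q\}$ has a maximum for every $q \in CP$. A direct computation shows this set equals the principal downset $a_{\beta(q)}^\downarrow$, where
\[\beta(q) := \max\{\,j : b_j \leq q\,\},\]
which is defined because $b_0 = \bot \leq q$ always and the $b_j$ form a chain. Setting $f_*(q) := a_{\beta(q)}$ thus gives the right adjoint, and since $\beta(b_i) = i$ we obtain $f_*(b_i) = a_i$.

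The step that needs genuine attention is verifying that $\alpha$ and $\beta$ are everywhere defined---this is exactly where the cone hypothesis earns its keep. An element of $P$ viewed inside $CP$ may well be incomparable with a given $a_i$ or $b_j$, but adjoining $\top = a_{d-1}$ and $\bot = b_0$ forces every $p$ to be bounded above by some $a_i$ and every $q$ to be bounded below by some $b_j$, so both $\alpha$ and $\beta$ are total on $CP$. Once this is in place, the remaining verifications are mechanical substitutions, and the single Galois connection $(f^* \dashv f_*) \in \Adj{CP}$ witnesses $b_i \preceq a_i$ for all $i$ simultaneously.
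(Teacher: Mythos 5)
Your construction is exactly the paper's: the case split $f^*(p)=b_0$ for $p\in a_0^\downarrow$ and $f^*(p)=b_i$ for $p\in a_i^\downarrow\setminus a_{i-1}^\downarrow$ is precisely your $f^*(p)=b_{\min\{i\,:\,p\leq a_i\}}$, and dually for $f_*$, with totality secured in the same way by $a_{d-1}=\top$ and $b_0=\bot$. The proposal is correct and only adds the explicit verification that $f^*\dashv f_*$, which the paper leaves to the reader.
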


\begin{proof}
Since $a_{d-1}=\top$, the following assignment defines a total monotone function.
\begin{equation*}
f^*(p):=\begin{cases}b_0&\mbox{ if }p\in a_0^\downarrow,\\b_i&\mbox{ if }p\in(a_i^\downarrow\setminus a_{i-1}^\downarrow)\mbox{ for some }i>0.\end{cases}
\end{equation*}

Similarly since $b_0=\bot$, the following assignment also defines a total monotone function.
\begin{equation*}
f_*(p):=\begin{cases}a_{d-1}&\mbox{ if }p\in b_{d-1}^\uparrow,\\a_i&\mbox{ if }p\in(b_i^\uparrow\setminus b_{i+1}^\uparrow)\mbox{ for some }i<d-1.\end{cases}
\end{equation*}

It is easily verified that $f^*\dashv f_*$.
\end{proof}

\begin{theorem}\label{LCone}
Let $P$ be a finite poset with height at least $2$. Then $\lambda(CP)=\mathbf 3$ and $L(CP)=\mathord{\Diamond}\sqcup\mathbf 3$.
\end{theorem}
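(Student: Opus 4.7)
The plan is to treat the two equalities separately, using the preceding Lemma and earlier results repeatedly.

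For $\lambda(CP) = \mathbf 3$, I note first that any $g \in \Adj{CP}$ satisfies $g^*(\bot) = \bot$ (left adjoints preserve initial objects) and $g_*(\top) = \top$ (dually). This makes $\bot$ and $\top$ strict extrema: a relation $p \preceq \bot$ would force $p = g^*(\bot) = \bot$, and $\top \preceq p$ would force $p = g_*(\top) = \top$. For any $p, q \in P$ I would apply the preceding Lemma with $d = 3$ to the $a$-chain $\bot < q < \top$ and $b$-chain $\bot < p < \top$; reading off index $i = 1$ produces $p \preceq q$, and by symmetry $q \preceq p$, so the interior of $CP$ collapses to a single class. The Lemma at $d = 2$ with $a$-chain $p < \top$ and $b$-chain $\bot < p$ then yields $\bot \preceq p$ (from $i = 0$) and $p \preceq \top$ (from $i = 1$), establishing $\lambda(CP) \cong \mathbf 3$.

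For $L(CP) = \mathord{\Diamond} \sqcup \mathbf 3$, the $\mathbf 3$-summand consists of the constant chains: Remark \ref{lambdaembeds} embeds $\lambda(CP) = \mathbf 3$ into $L(CP)$, and the first bullet of Proposition \ref{inftofin} prohibits any taxotopy relation between a constant chain and a non-constant one. For the non-constant chains, introduce the invariants $B(h) := [\bot \in \im h]$ and $T(h) := [\top \in \im h]$. Since $g^*(\bot) = \bot$, if $\bot \in \im h$ then $\bot \in g^*(\im h) = \im{g^* h} = \im{k f^*} \subseteq \im k$; the argument is dual for $T$ via $g_*(\top) = \top$. Hence $k \preceq h$ forces $B(h) \leq B(k)$ and $T(k) \leq T(h)$, and the pair $(B, T)$ descends to a monotone map from the non-constant taxotopy classes to the four-element poset $Q$ in which $(1, 0)$ lies strictly below the two incomparable elements $(0, 0)$ and $(1, 1)$, which both lie strictly below $(0, 1)$. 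This poset is exactly $\mathord{\Diamond}$.

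Two tasks then remain: (ii) exhibit a representative pair of chains realizing each of the four covering relations of $Q$, and (i) show that each $(B, T)$-fibre of the quotient map is a single taxotopy class. For (ii), each cover admits a short concrete witness, e.g.\ $\{\bot, p\} \preceq \{p, q\}$ (witnessing $L \preceq C$) for any $p < q$ in $P$, realised by a Lemma-style $g \in \Adj{CP}$ together with either the identity or a shift $f \in \Adj \mathbb Z$; the remaining three covers are analogous or dual. For (i), Corollary \ref{Lisinternal} reduces the problem to comparing two chains of the same type but different images; one builds the witnessing $g$ piecewise by the Lemma's recipe, expanding or collapsing the interior while preserving exactly whichever of $\bot, \top$ the type demands, and chooses $f$ to realign the partitions of $\mathbb Z$ induced by the two chains. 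The main obstacle throughout is arranging $f$ so that $\im{f^*}$ and $\im{f_*}$ are both isomorphic to $\mathbb Z$ (the criterion of Lemma \ref{adjZ}); this is precisely the difficulty met in Theorem \ref{LZ} and in the proof of Corollary \ref{Lisinternal}, and it is resolved by the same skip-a-point shift on $\mathbb Z$.
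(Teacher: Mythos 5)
Your proposal is correct and follows essentially the same route as the paper: the cone Lemma applied to matched chains supplies the witnessing adjunctions on $CP$, the $(B,T)$-invariant is just a repackaging of the paper's four classes $I_\emptyset,I_\bot,I_\top,I_{\bot\top}$ with the same $(\mathrm{co})$limit-preservation argument for strictness, and Proposition \ref{inftofin} together with Corollary \ref{Lisinternal} handles the domain $\mathbb Z$ exactly as in the paper. The remaining constructions you sketch for tasks (i) and (ii) are the same matched-subchain recipe the paper uses, left at a comparable level of detail.
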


\begin{proof}
We have already seen in Proposition \ref{inftofin} that elements from $\lambda(CP)$ are not taxotopy related to elements from $L(CP)\setminus\lambda(CP)$.

We first compute $\lambda(CP)$; equivalently, we find taxotopy order between constant maps with values $p$ and $q$. We only need to produce an adjunction on $CP$ depending on whether $p$ and $q$ are elements of $P$ or not.

If $p,q\in P$, then the above lemma applied to the chains $\bot<p<\top$ and $\bot<q<\top$ gives the relation $q\preceq p$ in $\lambda(CP)$. If $p=\top$ and $q\in P$, then the chains $\bot<\top$ and $\bot<q$ give the relation $q\preceq\top$. Dually if $q=\bot$ and $p\in P$, then we obtain $\bot\preceq p$ by considering the chains $p<\top$ and $\bot<\top$.

Finally since a left adjoint preserves $\bot$ and a right adjoint preserves $\top$, we obtain $\top\preceq p \Rightarrow p=\top$ and $q\preceq\bot \Rightarrow q=\bot$. This finishes the proof that $\lambda(CP)=\mathbf 3$ with classes $\bot\prec p\prec\top$ for each $p\in P$.

Now we compute $L(CP)\setminus\lambda(CP)$. We classify all non-constant maps $h:\mathbb Z\to CP$ into four distinct classes depending on the intersection $\im h\cap\{\bot,\top\}$, say $I_\emptyset,I_\bot,I_\top,I_{\bot\top}$. We will show that any two maps in the same class are taxotopy equivalent and, furthermore, the only taxotopy relations are $I_\bot\prec I_{\bot\top}\prec I_\top$ and $I_\bot\prec I_\emptyset\prec I_\top$.

Suppose $h,k$ are non-constant. Proposition \ref{inftofin} tells us that we can think of $h,k$ as maps from $\mathbf d$ to $CP$, where $d$ is the height of $CP$. Since $d\geq 4$, we get $I_\emptyset\neq\emptyset$.

We define and fix an adjunction $e$ on $\mathbf d$ as follows. Since $d\geq4$, the total order $\mathbf d$ can be thought of as a cone over some poset. Applying the above lemma in the case both chains are $0<d-1$ will produce the necessary adjunction. Observe that this adjunction is identity when restricted to $\{0,d-1\}$ and thus is compatible with taxotopy relations in $L(CP)$ in the sense of Proposition \ref{inftofin}.

Let $a_i:=h(i),b_i:=k(i)$ for $i\in\mathbf d$. Then $(a_i)_i,(b_i)_i$ are (non-necessarily strict) chains in $CP$. Since $h,k$ are non-constant, we know that $a_0<a_{d-1}$ and $b_0<b_{d-1}$. It remains to produce adjunction on $CP$ in different cases. We accomplish this by producing chains satisfying the hypothesis of the above lemma which constructs the adjunction.

The chains we are require are strict subchains of $\bot\leq a_0<a_{d-1}\leq\top$ and $\bot\leq b_0<b_{d-1}\leq\top$ of same length, say $\alpha$ and $\beta$ respectively, satisfying the following properties.
\begin{itemize}
\item $\top\in\alpha$, $\bot\in\beta$.
\item The $i^{th}$ entry in $\alpha$ is $a_0$ (resp., $a_{d-1}$) if and only if the $i^{th}$ entry in $\beta$ is $b_0$ (resp. $b_{d-1}$).
\end{itemize}
As an example, suppose $k\in I_\bot$ and $h\in I_\emptyset$, then the chains we use to show $k\preceq h$ are $a_0<a_{d-1}<\top$ and $\bot=b_0<b_{d-1}<\top$. In this way, we can obtain all the nine relations (including reflexivity) to form the diamond.

Now we show that these relations are strict. Suppose $k\preceq h$ for any two non-constant maps $h,k$. The (co)limit preserving properties of adjoints give $\top\in\im k \Rightarrow \top\in\im h$ and $\bot\in\im h \Rightarrow \bot\in\im k$. This completes the proof that $L(CP)=\mathord{\Diamond}\sqcup\mathbf 3$.
\end{proof}

The above theorem does not deal with the cases when the height of the finite poset $P$ is either $0$ or $1$. In the former case $P=\emptyset$ and $CP=\mathbf 2$. It can be checked that $\lambda(\mathbf 2)=\mathbf 2$ and $L(\mathbf 2)=\mathbf 1\sqcup\mathbf 2$. If the height of $P$ is $1$, then $P$ is a finite antichain. In that case, $\lambda(CP)=\mathbf 3$ and $L(CP)=\mathbf 3\sqcup\mathbf 3$.

The following theorem can be proved using very similar methods; the proof is omitted.
\begin{theorem}
If $P\neq\emptyset$ is finite, then $\Lambda(CP)=\mathord{\Diamond}$.
\end{theorem}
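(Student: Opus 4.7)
The plan is to follow the blueprint of Theorem \ref{LCone}: attach to each $h\in\Pos(CP,CP)$ a pair of boolean invariants, verify invariance under taxotopy, show each invariant value yields a single taxotopy class, and read off the diamond. Set $\alpha(h)=0$ iff $h(\bot)=\bot$ and $\beta(h)=0$ iff $h(\top)=\top$; the hypothesis $P\neq\emptyset$ ensures each of the four combinations $(\alpha,\beta)\in\{0,1\}^2$ is attained, by $c_\bot$, $c_{p_0}$, $\id{CP}$, $c_\top$ respectively (where $c_x$ denotes the constant map at $x$ and $p_0$ is any fixed element of $P$).

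First I would show that $k\preceq h$ implies $\alpha(k)\leq\alpha(h)$ and $\beta(k)\geq\beta(h)$. Evaluating $k\circ f^*=g^*\circ h$ at $\bot$ gives $k(\bot)=g^*(h(\bot))$ since $f^*(\bot)=\bot$, so $g^*(\bot)=\bot$ forces $\alpha(h)=0\Rightarrow\alpha(k)=0$; the dual square $h\circ f_*=g_*\circ k$ at $\top$ yields $\beta(k)=0\Rightarrow\beta(h)=0$. In particular the classes $(0,0)$ and $(1,1)$ are pairwise incomparable, while $(0,1)$ and $(1,0)$ sit strictly below and above all other classes.

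Next I would collapse each invariant class to a single taxotopy class. The key tool is the Galois connection $(c_\bot\dashv c_\top)$ on $CP$, valid on any bounded poset. For class $(0,0)$, choosing $f=g=(c_\bot\dashv c_\top)$ on both domain and codomain immediately forces both taxotopy squares to read $c_\bot=c_\bot$ and $c_\top=c_\top$, proving $h\approx k$ in both directions for any two such maps. For the remaining three classes I would keep $f=(c_\bot\dashv c_\top)$ on the domain so that $k\circ f^*=c_{k(\bot)}$ and $h\circ f_*=c_{h(\top)}$, and construct $g$ on the codomain by a single uniform formula: $g_*(x)=\bot$ if $x\not\geq k(\bot)$, $g_*(x)=h(\top)$ if $k(\bot)\leq x<\top$, and $g_*(\top)=\top$. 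Because $k(\bot)^\uparrow$ is closed under meets in $CP$, this $g_*$ preserves all meets and therefore has a left adjoint, which one computes to be $g^*(\bot)=\bot$, $g^*(y)=k(\bot)$ for $\bot<y\leq h(\top)$, and $g^*(y)=\top$ for $y\not\leq h(\top)$. One then checks directly that $\im h\subseteq[h(\bot),h(\top)]$ is mapped to $k(\bot)$ by $g^*$ and $\im k\subseteq[k(\bot),k(\top)]$ is mapped to $h(\top)$ by $g_*$, making both taxotopy squares commute and witnessing $k\preceq h$; symmetry of the construction gives $h\preceq k$.

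Finally the ordering assembles quickly: the invariants of the first step rule out all comparisons other than those in $\mathord{\Diamond}$; Remark \ref{lambdaembeds} applied to $\lambda(CP)=\mathbf 3$ embeds the chain $c_\bot\prec c_{p_0}\prec c_\top$ into $\Lambda(CP)$; and taking $f=g=(c_\bot\dashv c_\top)$ on both sides also witnesses $c_\bot\preceq\id{CP}\preceq c_\top$, supplying the remaining two comparisons. The main technical obstacle I anticipate is verifying monotonicity of $g^*$ and $g_*$ and the adjunction relation in class $(1,1)$ when $P$ contains elements incomparable with the thresholds $k(\bot)$ and $h(\top)$; this is precisely where the cone structure, which supplies a genuine bottom and top beyond those of $P$, together with the meet-closure of the principal up-set $k(\bot)^\uparrow$ in $CP$, becomes indispensable.
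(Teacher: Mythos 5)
Your proof is correct, and it is exactly the argument the paper has in mind: the paper omits the proof of this theorem with the remark that it follows by the methods of Theorem \ref{LCone}, and your invariants $(\alpha,\beta)$ reproduce the classification by $\im h\cap\{\bot,\top\}$ used there, while your codomain adjunction $g$ is precisely the instance of the chain lemma preceding Theorem \ref{LCone} applied to the chains $\bot\leq k(\bot)<\top$ and $\bot< h(\top)\leq\top$. One small quibble: the justification ``$g_*$ preserves all meets and therefore has a left adjoint'' is misplaced since $CP$ need not be a lattice, but this is harmless because the explicit $g^*$ you write down is directly checked to satisfy $g^*(y)\leq x\iff y\leq g_*(x)$ (each set $\{x:y\leq g_*(x)\}$ is $CP$, $k(\bot)^\uparrow$ or $\{\top\}$, all of which have least elements).
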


As corollary to above theorems, we record the fundamental posets of total orders.
\begin{cor}\label{lambdatotalorder}
For $n\geq 4$, we have $\lambda(\mathbf n)=\mathbf 3$, $\Lambda(\mathbf n)=\mathord{\Diamond}$, and $L(\mathbf n)=\mathord{\Diamond}\sqcup\mathbf 3$. If $P$ is an infinite total order with $m$ endpoints, then $\lambda(P)=\mathbf d$ where $d=m+1$.
\end{cor}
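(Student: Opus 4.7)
The plan is to reduce the first assertion to Theorem \ref{LCone} together with the subsequent theorem about $\Lambda(CP)$, and to prove the second assertion by case analysis on $m \in \{0,1,2\}$. For the finite claim with $n \geq 4$, I would observe that $\mathbf n$ is canonically isomorphic to the cone $C(\mathbf{n-2})$ over the finite total order $\mathbf{n-2}$, which is non-empty and has height $n-2 \geq 2$; substituting this identification into the two cone theorems yields $\lambda(\mathbf n) = \mathbf 3$, $L(\mathbf n) = \mathord{\Diamond} \sqcup \mathbf 3$, and $\Lambda(\mathbf n) = \mathord{\Diamond}$ at once.

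For the infinite case, the backbone across all values of $m$ is the observation already exploited in Theorem \ref{LCone}: left adjoints preserve $\bot$ and right adjoints preserve $\top$. Hence any endpoint of $P$ constitutes a singleton class of $\lambda(P)$ strictly above (resp.\ below) every other class, which accounts for $m$ of the $m+1$ claimed classes; the remaining task is to show that the non-endpoint elements coalesce into a single middle class. When $m=2$, writing $P = C(P_0)$ for $P_0 := P \setminus \{\top, \bot\}$ (an infinite, hence non-empty, sub-poset) reduces this to a direct application of the cone lemma preceding Theorem \ref{LCone}: that lemma is stated for an arbitrary underlying poset, so applies verbatim, and the length-three chains $\bot < p < \top$ and $\bot < q < \top$ for $p, q \in P_0$ yield $q \preceq p$, with the swap of the two chains giving the reverse relation.

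For $m \in \{0,1\}$ the cone lemma is no longer directly available, so I would reproduce its spirit by hand; by duality (passing to $P^{op}$) I may assume in the case $m=1$ that the sole endpoint is $\top$. The easy direction $p \preceq q$ for $p < q$ both non-endpoint is witnessed by the ``scissor'' Galois connection $g$ defined by $g^*(x) = x$ on $p^\downarrow$, $g^*(x) = p$ on $(p, q]$, and $g^*(x) = x$ on the remaining upper piece of $P$; a direct calculation shows that its right adjoint exists (given by $g_*(y) = q$ on $[p, q]$ and $g_*(y) = y$ elsewhere), preserves $\top$ when present, and satisfies $g^*(q) = p$ and $g_*(p) = q$ as required. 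For the reverse direction $q \preceq p$ one would construct a compatible ``shift'' Galois connection $h$ with $h^*(p) = q$ and $h_*(q) = p$; for the familiar total orders $\mathbb Z, \mathbb Q, \mathbb R, \mathbb N, \mathbb N^{op}$ this is supplied by the translation $x \mapsto x + (q-p)$, suitably truncated at $\top$ if present.

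The step I expect to be the main obstacle is precisely this construction of the shift Galois connection realising $q \preceq p$: the scissor connection is local around $[p, q]$ and its verification reduces to checking a handful of boundary points, whereas the shift must act globally on $P$ and preserve the joins that arise on the boundary where the translated piece meets the unchanged piece. Extending the shift compatibly across the whole of $P$, and verifying the join-preservation at that boundary, is the single delicate calculation in the second part of the corollary.
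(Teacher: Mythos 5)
Your treatment of the first assertion ($\mathbf n=C(\mathbf{n-2})$, then quote the two cone theorems) is exactly the paper's route, and your $m=2$ case likewise matches in spirit. The problem is the cases $m\in\{0,1\}$, where you yourself flag the unresolved step: the existence of a global ``shift'' Galois connection realising $q\preceq p$. This is a genuine gap, not just a delicate calculation. A translation $x\mapsto x+(q-p)$ exists for $\mathbb Z,\mathbb Q,\mathbb R,\mathbb N,\mathbb N^{op}$, but the corollary is asserted for an \emph{arbitrary} infinite total order; for something like $\mathbb Z\lhd\mathbb Q$ (no endpoints, one discrete and one dense block) there is no order-automorphism, and no obvious adjunction, moving a point of one block onto a point of the other while acting sensibly on the rest. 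So as written your argument does not establish that all non-endpoints form a single taxotopy class in general.

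The paper's proof avoids any global construction by localizing: given non-endpoints $p,q$, choose $a<\min(p,q)$ and $b>\max(p,q)$ (possible precisely because $p,q$ are not endpoints), observe that $[a,b]$ is the cone over the open interval $(a,b)$, apply the cone lemma inside $[a,b]$ to the chains $a<p<b$ and $a<q<b$ to get an adjunction on $[a,b]$ witnessing $q\preceq p$ (and symmetrically $p\preceq q$), and then extend that adjunction to all of $P$ by the identity outside $[a,b]$ --- an extension which is easily checked to remain a Galois connection because $[a,b]$ is a closed interval of a total order and the constructed adjoints map $[a,b]$ into itself. This one argument is uniform in $m$, so your case split is unnecessary; the endpoints are then handled exactly as you say, by (co)limit preservation. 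If you replace your shift construction with this localize-and-extend-by-identity device, your proof closes.
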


\begin{proof}
Only the second statement needs to be proven. Let $P$ be an infinite total order with $m$ endpoints.

If $p,q\in P$ are not endpoints, then there are $a,b\in P$ such that both $p,q$ belong to the open interval $(a,b)$. The closed interval $[a,b]$ is the cone over the open interval $(a,b)$. Moreover every adjunction on $[a,b]$ can be extended to an adjunction on $P$ using identity outside $[a,b]$. The first part of the proof of Theorem \ref{LCone} can be applied to $[a,b]$ to show that $p$ and $q$ are taxotopy equivalent. The minimum/maximum of the poset, if exists, is also the minimum/maximum of the taxotopy order for adjoints preserve (co)limits. This completes the proof.
\end{proof}

\begin{rmk}
It should be noted that, in general, the poset $\lambda(P)$ depends on the whole of $P$. The knowledge of a part of $P$ may not give any information on a part of $\lambda(P)$. In particular, the relations $p\leq q$ and $p\preceq q$ are independent unless either of the elements has a special designation in the whole of $P$ (e.g., when $q$ is the top element). In this respect, we can say that $\lambda(P)$ is ``orthogonal'' to $P$.
\end{rmk}

\section{$\Lambda(S,-)$ as a functor}
Given a monotone map $h:Q\to P$ between two posets, it is natural to ask under what conditions on $h$ we get an induced map $\lambda(h):\lambda(Q)\to\lambda(P)$. In other words we ask for a subcategory $\mathcal C$ of $\Pos$ where $\lambda:\mathcal C\to\Pos$ is a functor.

The answer is very simple if we refer to the adjoint topology on a poset! A map $h:Q\to P$ induces a map $\lambda(Q)\to\lambda(P)$ if and only if $h$ is continuous with respect to the adjoint topologies on $Q$ and $P$. This statement is true irrespective of whether $h$ is actually monotone but we will only restrict our attention to monotone continuous maps. The collection $\Cont{}$ of all monotone continuous maps is a subcategory of $\Pos$.

In other words, $\lambda:\Cont{}\to\Pos$ is a functor, i.e., if $q\preceq q'$ in $\lambda(Q)$ and $h\in\Cont{}(Q,P)$, then $h(q)\preceq h(q')$ in $\lambda(P)$. We can treat $q,q'$ as maps in $\Pos(\mathbf 1,Q)$. This motivates the following definition.
\begin{definition}
Let $\emptyset\neq S\in\Pos$. A map $h\in\Pos(Q,P)$ is said to be $S$-\textbf{continuous} if whenever $k\preceq k'$ in $\Lambda(S,Q)$ then $h\circ k\preceq h\circ k'$ in $\Lambda(S,P)$. If we denote the subcategory of $\Pos$ consisting of $S$-continuous maps as $\Cont S$, then $\Lambda(S,-):\Cont S\to\Pos$ is a functor.
\end{definition}
Note that $\Cont{}=\Cont{\mathbf 1}$ and $\Cont S\subseteq\Cont{}$ for each nonempty $S$, for a taxotopy relation between two constant maps depends only on the existence of an adjunction on the codomain.

$S$-continuity is a statement that guarantees the existence of two adjunctions given two specific adjunctions, i.e., if $(e,g)\models k\preceq k'$, then there exist $e'\in\Adj S,\ f'\in\Adj P$ such that $(e',f')\models h\circ k\preceq h\circ k'$. But the adjunction $e'$ may not be related to $e$. Similarly, we may not expect (bi)commutativity of the square involving only $Q$ and $P$. This makes it hard to work with $S$-continuous maps themselves unless some extra structural information about the posets is known. The following definition introduces a class of maps where the existence clause is fairly simple.
\begin{definition}
A monotone map $h:Q\to P$ is said have the \textbf{extension property} if each $f\in\Adj Q$ yields an adjunction $f^{ext}$ on $P$ such that $(f,f^{ext})\models h\preceq h$. We denote by $\mathcal E$ the subcategory of $\Pos$ consisting of monotone maps with extension property.
\end{definition}
Note that $\mathcal E\subseteq\Cont S$ for all $S\neq\emptyset$.
\begin{examples}
The following have the extension property: the unique map $P\to\mathbf 1$ for each nonempty $P$; any constant map; inclusion of any subset of $\mathbb Z$; inclusion of any connected component of a poset; inclusion of an upper subset $U$ of $P$ if each element of $U$ is strictly above each element of $P\setminus U$; any embedding $\mathbf 3\to CP$ that preserves $\top$ and $\bot$ for nonempty $P$; the unique surjection $CP\to\mathbf 3$ that maps $P$ onto $1\in\mathbf 3$ for nonempty $P$.
\end{examples}

So far we discussed preservation of the taxotopy order. Now we define a class of maps that reflect taxotopy order.
\begin{definitions}
An order embedding $i:Q\rightarrowtail P$ is said to have the \textbf{restriction property} if each $f\in\Adj P$ restricts to an adjunction $f\mathord{\mid}_Q$ on $Q$, i.e., $(f\mathord{\mid}_Q,f)\models i\preceq i$. We denote by $\mathcal R$ the subcategory of $\Pos$ consisting of order embeddings with restriction property.

We denote by $\Subw P$ (resp. $\Subs P$) the collection of all subsets $Q\subseteq P$ such that the inclusion $Q\rightarrowtail P$ is in $\mathcal R$ (resp. in $\mathcal R\cap\mathcal E$).
\end{definitions}
As noted above, an order embedding $i:Q\rightarrowtail P$ with restriction property reflects taxotopy order, i.e., for $k,k'$ in $\Pos(S,Q)$, if $i\circ k\preceq i\circ k'$ in $\Lambda(S,P)$ then $k\preceq k'$ in $\Lambda(S,Q)$.

Suppose $Q'\subseteq Q\subseteq P$. If $Q,Q'\in\Subw P$, then $Q'$ may not belong to $\Subw Q$. But if $Q,Q'\in\Subs P$, then $Q'\in\Subs Q$.

\begin{rmk}
If $\emptyset\neq Q\in\Subs P$, then $f\mapsto f\mathord{\mid}_Q:\Adj P\to\Adj Q$ is a left inverse for any map $g\mapsto g^{ext}:\Adj Q\to\Adj P$. As a consequence, the inclusion induces an order embedding $\Lambda(S,Q)\rightarrowtail\Lambda(S,P)$ for each $S\in\Pos$.

If $P\neq\emptyset$, then the poset $(\Subw P,\subseteq)$ is complete but $(\Subs P,\subseteq)$ need not even be closed under finite meets or joins.
\end{rmk}

\section{Taxotopy of posets}\label{taxpos}
Two topological spaces $X$ and $Y$ are homotopy equivalent if there are continuous maps $f:X\rightleftarrows Y:g$ such that $gf\simeq\id X$ and $fg\simeq\id Y$. Consistent with the theme of this paper, instead of defining taxotopy equivalence relation on the posets directly, our aim now is to define taxotopy preorder between posets using a similar idea.
\begin{definition}
Suppose $S$ is a nonempty poset. A \textbf{weak $S$-adjoint} is a pair of $S$-continuous maps $h^+:Q\rightleftarrows P:h_+$ such that $\id Q\preceq h_+h^+$ in $\Pos(\Lambda(S,Q),\Lambda(S,Q))$ and $h^+h_+\preceq\id P$ in $\Pos(\Lambda(S,P),\Lambda(S,P))$, i.e., $k'\preceq h_+h^+k'$ for each $k'\in\Pos(S,Q)$ and $h^+h_+k\preceq k$ for each $k\in\Pos(S,P)$. We write this as $h^+\simv S h_+$.
\end{definition}
As usual the notations $h^+,h_+$ denote the left and right weak adjoints respectively. A weak $\mathbf 1$-adjoint will simply be referred to as a weak adjoint.

We use the word adjunction in the above definition because a weak $S$-adjunction $h^+\simv S h_+$ induces an actual adjunction $\Lambda(S,h^+)\dashv\Lambda(S,h_+)$ between the fundamental posets.

\begin{definition}\label{nulltaxotopy}
Fix a nonempty $S$. We say that a poset $P$ is \textbf{taxotopic} (resp. $S$-taxotopic) to $Q$, written $P\preceq Q$ (resp. $P\preceq_S Q$), if there exists a weak adjunction (resp. weak $S$-adjunction) $h^+:P\rightleftarrows Q:h_+$ where the right adjoint goes from $P$ to $Q$.

We say that $P$ and $Q$ are \textbf{taxotopy equivalent} (resp. $S$-taxotopy equivalent), written $P\approx Q$ (resp. $P\approx_S Q$), if $P\preceq Q$ and $Q\preceq P$ (resp. $P\preceq_S Q$ and $Q\preceq_S P$).

We say that a poset is \textbf{null-taxotopic} if it is taxotopy equivalent to $\mathbf 1$.
\end{definition}
It is easy to see that the relations $\preceq_S$ are reflexive and transitive, and hence preorders.

\begin{pro}\label{listnulltax}
Each poset in the following list is null-taxotopic: a total order, the cone $CP$ for any poset $P$ and hence any complete lattice, any arbitrary copower of any poset in this list.
\end{pro}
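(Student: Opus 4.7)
The plan is to first isolate a convenient structural reformulation of null-taxotopy, and then verify the reformulation for each family on the list.

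The reformulation I would aim for: a nonempty poset $P$ is null-taxotopic if and only if $\lambda(P)$ has both a minimum class and a maximum class. I would derive this by unpacking the weak adjoint conditions for the pair $(P,\mathbf 1)$. Every map into or out of $\mathbf 1$ is either the unique collapse or point-picking, and both lie in the extension-property class $\mathcal E$ (hence are $\mathbf 1$-continuous), so the continuity hypothesis is automatic. The two hom-poset inequalities in the definition of weak adjoint collapse significantly: the one living in $\Pos(\lambda(\mathbf 1),\lambda(\mathbf 1))$ is trivial, and the one in $\Pos(\lambda(P),\lambda(P))$ amounts to saying that the $p_0\in P$ witnessing $P\preceq\mathbf 1$ satisfies $[p_0]\preceq[p]$ for every $p$ (i.e.\ $[p_0]$ is a minimum of $\lambda(P)$), and dually that the $p_1$ witnessing $\mathbf 1\preceq P$ is a maximum.

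For total orders, Corollary \ref{lambdatotalorder} together with the small cases ($\lambda(\mathbf 1)=\mathbf 1$, $\lambda(\mathbf 2)=\mathbf 2$, and $\lambda(\mathbf 3)=\mathbf 3$, the last by the antichain remark following Theorem \ref{LCone}) shows that $\lambda$ of a nonempty total order is itself a finite total order $\mathbf d$ with $1\leq d\leq 3$, which automatically has a minimum and a maximum. For cones, the lemma preceding Theorem \ref{LCone}, applied to the length-two chains $p<\top$ and $\bot<\top$ (respectively $\bot<\top$ and $\bot<p$), gives $p\preceq\top$ and $\bot\preceq p$ in $\lambda(CP)$ for every $p\in CP$, so $[\bot]$ and $[\top]$ are the required extremes. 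A complete lattice $L$ has unique bottom and top and is, up to relabelling, the cone $C(L\setminus\{\bot,\top\})$, so the same conclusion applies.

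For copowers, suppose $P$ is null-taxotopic with $p_\top,p_\bot\in P$ representing the max/min of $\lambda(P)$, and let $I$ be a nonempty set; write $I\cdotp P=\bigsqcup_{i\in I}P_i$ and let $p_{\bullet,i}\in P_i$ denote the copy of $p_\bullet$. I would show that $[p_{\top,0}]$ is the maximum of $\lambda(I\cdotp P)$ (and dually for the minimum) in two steps. First, for $k\in P_i$, the adjunction $f\in\Adj P$ witnessing $k\preceq p_\top$ in $\lambda(P)$ extends to $\tilde f\in\Adj{I\cdotp P}$ by the identity on the remaining copies; this extension is a Galois connection because distinct copies of $P$ are order-incomparable in $I\cdotp P$, so the adjunction condition reduces copywise. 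That gives $k\preceq p_{\top,i}$ in $\lambda(I\cdotp P)$. Second, the involution $\sigma:I\cdotp P\to I\cdotp P$ swapping $P_0$ and $P_i$ (identity elsewhere) is a self-inverse order-isomorphism and hence lies in $\Adj{I\cdotp P}$ with $\sigma^*=\sigma_*=\sigma$, placing $p_{\top,i}$ and $p_{\top,0}$ in the same $\approx$-class. Transitivity of $\preceq$ then yields $k\preceq p_{\top,0}$, and the dual argument handles the minimum.

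The main obstacle I anticipate is simply keeping the direction conventions of the weak $S$-adjoint and poset-level preorder definitions straight when specialising to $S=Q=\mathbf 1$; once the min/max reformulation of null-taxotopy is pinned down, each family on the list reduces to an inspection using tools already at hand in the preceding sections.
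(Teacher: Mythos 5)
Your proposal is correct and follows essentially the same route as the paper: the reformulation of null-taxotopy as ``$\lambda(P)$ has a minimum and a maximum class'' is exactly what the paper's proof uses implicitly when it names the constant maps at $\bot$ and $\top$ as the weak adjoints of $!:CP\to\mathbf 1$, and your case-by-case verification rests on the same ingredients (Corollary \ref{lambdatotalorder}, the lemma preceding Theorem \ref{LCone}, and componentwise extension of adjunctions for copowers). The only real difference is that you spell out the copower step explicitly, which the paper dispatches with the one-line observation that $\lambda$ of a copower equals $\lambda$ of the poset.
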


\begin{proof}
Let $T$ be a total order without endpoints. Any monotone map $\mathbf 1\to T$ has the extension property and so does the unique map $!:T\to\mathbf 1$. These continuous maps induce isomorphisms between fundamental posets since $\lambda(T)=\mathbf 1=\lambda(\mathbf 1)$ and hence $T\approx\mathbf 1$.

For an infinite total $T'$ order with one endpoint, we have seen that $\lambda(T')=\mathbf 2$. The constant map $\mathbf 2\to\mathbf 1$ admits both adjoints. Moreover these adjoints are induced by some continuous maps $\mathbf 1\to T'$ and hence $T'\approx\mathbf 1$.

For a poset $P$, the unique map $!:CP\to\mathbf 1$ admits both left and right weak adjoints, namely the maps with images $\bot$ and $\top$ respectively.

Finally note that $\lambda$ of an arbitrary copower of a poset is $\lambda$ of the poset itself and hence the proof.
\end{proof}

A smallest example of a non-null-taxotopic poset contains three elements. Let $\mathbf V$ denote the poset with three elements and whose Hasse diagrams look like the letter $V$. Recall that $\mathbf V$ is contractible as a topological space but there are no taxotopy relations between $\mathbf 1$ and $\mathbf V$. Moreover if P is any null-taxotopic poset and $\ddot P$ denotes the poset containing exactly two incomparable elements strictly above all elements of P, then $\mathbf V\approx\ddot P$. In summary, the structure of the poset plays an important role in determining its taxotopy equivalence class.

Now we proceed to find a ``taxotopy reduct'' of a poset, i.e., a subset of a poset, produced in a canonical way, that is taxotopy equivalent to the original poset.
\begin{definition}
Let $P$ be a non-empty poset. Given $a<b$ in $P$, the closed interval $[a,b]$ is said to be a \textbf{tunnel} if the open interval $(a,b)$ contains at least two elements and for each $c\in(a,b)$ and $d\notin[a,b]$, if $c<d$ then $b<d$ and if $c>d$ then $a>d$.
\end{definition}
If the interiors of two tunnels intersect then one of them is contained in the other. We can collapse a tunnel pair to a copy of $\mathbf 3$ without affecting the fundamental poset.
\begin{pro}
Let $P$ be a poset with a tunnel $[a,b]$. If $P^1$ denotes the poset in which the open interval $(a,b)$ is replaced by a single element $c$ satisfying $a<c<b$, then $\lambda(P^1)=\lambda(P)$.
\end{pro}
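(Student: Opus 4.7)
I would show that the canonical surjection $\pi\colon P\to P^1$, which is the identity on $P\setminus(a,b)$ and collapses $(a,b)$ to the new element $c$, descends to an order-isomorphism of fundamental posets $\lambda(P)\cong\lambda(P^1)$. The proof splits into two stages.

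\emph{Stage 1: the tunnel interior is a single taxotopy class.} I would first verify $p\approx q$ in $\lambda(P)$ for every $p,q\in(a,b)$. The tunnel condition identifies $[a,b]$ with the cone $C((a,b))$, with $a$ and $b$ playing the roles of $\bot$ and $\top$, so the lemma preceding Theorem \ref{LCone} applied to the chains $a<p<b$ and $a<q<b$ produces an adjunction $f\in\Adj([a,b])$ fixing both endpoints and satisfying $f^*(p)=q$, $f_*(q)=p$. The tunnel condition forces any $d\notin[a,b]$ comparable to some element of $(a,b)$ to lie strictly above $b$ or strictly below $a$, which guarantees that extending $f$ by the identity on $P\setminus[a,b]$ remains monotone; the extension is therefore an adjunction on $P$ witnessing $q\preceq p$, and swapping $p,q$ gives the reverse relation.

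\emph{Stage 2: the descended map is an order-isomorphism.} By Stage 1, the map $\pi$ descends to a well-defined bijection $\bar\pi\colon\lambda(P)\to\lambda(P^1)$ on underlying sets, since the only non-singleton $\pi$-fibre is $(a,b)$. To conclude I would check that $\bar\pi$ both preserves and reflects the taxotopy preorder. Choosing a section $\sigma\colon P^1\to P$ with $\sigma(c)=c_0$ for a fixed $c_0\in(a,b)$ (one verifies $\sigma$ is an order embedding using the tunnel condition), given an adjunction witnessing a taxotopy relation on one side I would construct a witnessing adjunction on the other side via the composites $\pi\circ g\circ\sigma$ and $\sigma\circ\bar g\circ\pi$. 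Monotonicity of all these composites follows from the monotonicity of $\pi$, $\sigma$, and the adjoints involved, together with the tunnel property. The adjunction inequality is then verified by a case analysis on whether the arguments fall into $(a,b)$ or outside, using the freedom provided by Stage 1 to modify the adjunctions when necessary.

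\emph{Main obstacle.} The chief difficulty is that the naive composites $\pi\circ g\circ\sigma$ and $\sigma\circ\bar g\circ\pi$ do not in general define adjunctions: the adjoint inequality can fail because collapsing $(a,b)$ to $c$ may identify pairs in $P^1$ whose preimages in $(a,b)$ are incomparable. To repair this I would use Stage 1 adjunctions to replace a given $g$ by a taxotopy-equivalent one whose action on $(a,b)$ is compatible with $\pi$, in the sense that $\pi\circ g^*|_{(a,b)}$ and $\pi\circ g_*|_{(a,b)}$ are constant, so that the descent is well-defined. Verifying that the modified composite is indeed an adjunction and still witnesses the original relation when $x,y\notin(a,b)$ (using that composites of left adjoints are left adjoints and that Stage 1 adjunctions are identity outside $[a,b]$), together with the parallel treatment of the cases when $x$ or $y$ lies in $(a,b)$ (where Stage 1 allows us to reduce first to the base point $c_0$), is the most intricate part of the proof and will consist largely of case analysis.
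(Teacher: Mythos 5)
Your plan follows the paper's proof essentially verbatim: the paper likewise observes that $[a,b]=C((a,b))$ forces all of $(a,b)$ into a single taxotopy class, and then asserts that the collapse map $\pi$ and any section $\sigma$ (extended by the identity outside the tunnel) are continuous and form a pair of weak adjoints, whence $\lambda(P)\cong\lambda(P^1)$. The only difference is one of packaging: where you verify by hand that the induced bijection preserves and reflects $\preceq$, the paper invokes the weak-adjoint formalism of Section \ref{taxpos}, and it leaves unwritten exactly the case analysis you flag as ``the most intricate part''.
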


\begin{proof}
Since the tunnel $[a,b]$ is the cone over the open interval $(a,b)$, the class of each element in $(a,b)$ in $\lambda(P)$ is the same. Using the tunnel condition, the unique map $(a,b)\to\{c\}$ and any embedding of $\{c\}$ into $(a,b)$ can be extended to monotone maps between $P$ and $P^1$ using identity on the complements. These extensions are continuous and form a pair weak adjoints.
\end{proof}

If $P$ is a finite poset, then there is a finite number of maximal tunnels which could be reduced using the above procedure to obtain a tunnel-free or a ``reduced'' poset. It is simpler to apply the techniques in the following sections for computing fundamental posets to such reduced posets.

\section{Fundamental posets of disjoint unions}
One of the important goals of this article is to compute the fundamental poset of a given poset using the fundamental posets of its parts. The case of a connected poset will be dealt with in later sections but now we concentrate on the posets with more than one connected component.

To state our results we need the definition of the generalized taxotopy relation.
\begin{definition}
Suppose $P$ and $Q$ are connected posets. For $p\in P,q\in Q$, we say that $p$ is \textbf{taxotopic} to $q$, written $p\preceq_P^Qq$, if there is an adjunction $f^*:Q\rightleftarrows P:f_*$ such that $f^*(q)=p$ and $f_*(p)=q$.

More generally, for a fixed non-empty poset $S$, we say that $k:S\to P$ is taxotopic to $h:S\to Q$, written $k{}_S\preceq_P^Q h$, if there are adjunctions $e^*\dashv e_*$ on $S$ and $f^*:Q\rightleftarrows P:f_*$ such that $f^*\circ h=k\circ e^*$ and $f_*\circ k=h\circ e_*$.
\end{definition}
Note that the former case obtained when $S=1$. We can recover the fundamental poset $\Lambda(S,P)$ by setting $Q=P$ in the latter definition.

The following observation is crucial.
\begin{pro}
Let $P=\bigsqcup_{i\in I}P_i$ be the decomposition of a poset $P$ into its connected components. Any adjunction $f$ on $P$ induces a permutation $\phi:I\to I$ such that $f_*(P_i)\subseteq P_{\phi(i)}$.
\end{pro}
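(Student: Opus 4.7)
The plan is a short three-step argument using the fact that the connected components of a poset are the equivalence classes of the equivalence relation generated by the order, combined with the unit/counit inequalities of the adjunction.

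First, I would record the general observation that any monotone map $h\colon P\to P$ sends each connected component of its domain into a single connected component of its codomain. The reason is that the connectedness equivalence relation on a poset is generated by the relation $x\leq y$, and a monotone map preserves comparability, hence preserves the generating relation and therefore the equivalence. Applying this to $f_*$ yields a function $\phi\colon I\to I$ with $f_*(P_i)\subseteq P_{\phi(i)}$, and applying it to $f^*$ yields a function $\psi\colon I\to I$ with $f^*(P_j)\subseteq P_{\psi(j)}$.

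Next, I would exploit the adjunction $f^*\dashv f_*$. For each $p\in P_i$, the unit inequality $p\leq f_*f^*(p)$ forces $p$ and $f_*f^*(p)$ to lie in the same connected component, since comparable elements are connected. But by the two inclusions above, $f^*(p)\in P_{\psi(i)}$ and then $f_*f^*(p)\in P_{\phi(\psi(i))}$, so $\phi(\psi(i))=i$. Dually, the counit inequality $f^*f_*(p)\leq p$ applied for $p\in P_i$ gives $\psi(\phi(i))=i$.

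Combining, $\phi$ and $\psi$ are two-sided inverses, so $\phi$ is a bijection $I\to I$, i.e.\ a permutation, with the desired property $f_*(P_i)\subseteq P_{\phi(i)}$. I do not expect any serious obstacle here; the one point requiring the slightest care is the first step, where one must verify that monotone maps do preserve the connected-component partition, but this follows immediately once connectedness is phrased in terms of the equivalence relation generated by $\leq$.
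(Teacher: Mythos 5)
Your proof is correct and complete: the paper states this proposition without giving a proof, and your argument is the natural one the authors evidently had in mind. The two steps --- that a monotone map preserves the partition into connected components because connectedness is the equivalence relation generated by comparability, and that the unit and counit inequalities $p\leq f_*f^*(p)$ and $f^*f_*(p)\leq p$ force the induced maps $\phi$ and $\psi$ on $I$ to be mutually inverse --- are exactly what is needed, and no step fails.
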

A permutation on the index set, $I$, that can be obtained from an adjunction in this way will be termed an admissible permutation. An ordered pair $(i,j)$ will be termed admissible, written $i\preceq j$, if there exists an admissible permutation $\phi$ on $I$ such that $j=\phi(i)$. It is clear that the admissibility relation, $\preceq$, on the ordered pairs is indeed a preorder. Furthermore if $i\preceq j$, $j\preceq k$, $p_i\preceq_{P_i}^{P_j} p_j$ and $p_j\preceq_{P_j}^{P_k} p_k$, then $p_i\preceq_{P_i}^{P_k}p_k$.

\begin{pro}\label{lambdadisjoint}
Let $S$ be a connected poset and $\{P_i\}_{i\in I}$ be a family of non-empty connected posets. Then
\begin{equation*}
\Lambda(S,\bigsqcup_{i\in I}P_i)=\left(\bigsqcup_{i\in I}\Pos(S,P_i),\bigsqcup_{i\preceq j}{}_S\preceq_{P_i}^{P_j}\right)/\left(\bigsqcup_{i\approx j}{}_S\preceq_{P_i}^{P_j}\cap{}_S\preceq_{P_j}^{P_i}\right).
\end{equation*}
\end{pro}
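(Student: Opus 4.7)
The plan is to identify both the underlying set and the preorder of $\Lambda(S,P)$, where $P = \bigsqcup_{i \in I} P_i$. Since $S$ is connected and monotone maps preserve connectedness, the image of any map $S \to P$ lies in a unique component, yielding the set-theoretic decomposition $\Pos(S, P) = \bigsqcup_{i \in I} \Pos(S, P_i)$.

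The key step is to show that, for $k \in \Pos(S, P_i)$ and $h \in \Pos(S, P_j)$, the relation $k \preceq h$ in $(\Pos(S, P), \preceq)$ holds if and only if $i \preceq j$ in the admissibility preorder on $I$ and $k \, {}_S\preceq_{P_i}^{P_j} \, h$. For the forward direction, suppose $(e, f) \models k \preceq h$. The preceding proposition gives a permutation $\phi$ of $I$ with $f_*(P_l) \subseteq P_{\phi(l)}$; chasing images through the equation $h \circ e_* = f_* \circ k$ forces $\phi(i) = j$, so $i \preceq j$, and the restrictions of $f^*$ and $f_*$ to the pair of components $P_j$ and $P_i$ form an adjunction $P_j \rightleftarrows P_i$ which, together with $e$, witnesses the generalized taxotopy $k \, {}_S\preceq_{P_i}^{P_j} \, h$.

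The main obstacle is the reverse direction, which requires extending a local adjoint pair between two components to a global adjunction on $P$. Given witnessing adjunctions $e$ on $S$ and $f_{ij}^* : P_j \rightleftarrows P_i : f_{ij*}$, I would invoke the hypothesis $i \preceq j$ to select an admissible permutation $\phi$ of $I$ with $\phi(i) = j$, arising from some adjunction $g$ on $P$ which, by the preceding proposition, restricts to component-wise adjoint pairs $g_l^* : P_{\phi(l)} \rightleftarrows P_l : g_{l*}$ for every $l \in I$. I would then define a new adjunction $f$ on $P$ by replacing $g_i$ with the given $f_{ij}$ while retaining $g_l$ for all $l \neq i$; this is well-defined because the data of an adjunction on $\bigsqcup_l P_l$ is precisely a permutation of $I$ together with a choice of adjoint pair $P_{\phi(l)} \rightleftarrows P_l$ for each $l$, and altering a single such choice leaves the adjointness unaffected. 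The bicommutativity of the taxotopy square for $(e, f)$ then holds componentwise and reduces on the $(P_i, P_j)$-block exactly to the assumed witness of the generalized taxotopy, yielding $(e, f) \models k \preceq h$.

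Combining the two directions, the preorder on $\bigsqcup_i \Pos(S, P_i)$ induced by taxotopy in $P$ agrees with the union $\bigsqcup_{i \preceq j} {}_S\preceq_{P_i}^{P_j}$; transitivity of this union across admissible pairs was already recorded before the proposition. Taking the posetal reflection (i.e.\ quotienting by the intersection of mutual relations across admissibly equivalent indices) then yields $\Lambda(S, P)$ as the claimed quotient.
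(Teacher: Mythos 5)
Your proposal is correct and follows the same route as the paper: the connectedness of $S$ gives the set-level decomposition, and the component-permutation induced by any adjunction on the disjoint union (from the preceding proposition) identifies the taxotopy relation with the generalized relations ${}_S\preceq_{P_i}^{P_j}$ over admissible pairs. You merely spell out in detail the block-restriction and block-replacement arguments that the paper leaves implicit in its one-line proof, and both directions check out.
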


\begin{proof}
The image of a monotone map with connected domain is contained in a connected component of the codomain. Therefore $k\preceq h$ in $\Lambda(S,\bigsqcup_{i\in I}P_i)$ if and only if $k{}_S\preceq_{P_i}^{P_j}h$ for some $i\preceq j$.
\end{proof}

When the domain has more than one connected component, the situation is more complicated.
\begin{definitions}
\textbf{The folding quotient:} Given $Q\in\Set$ and a poset $(Q^2,\leq)$ with \textbf{coordinate symmetry}, i.e., $(q_1,q_2)\leq(q'_1,q'_2)$ if and only if $(q_2,q_1)\leq(q'_2,q'_1)$, the folding quotient $\mbox{Fold}(Q^2,\leq)$ is obtained by identifying the ordered pairs $(q_1,q_2)$ and $(q_2,q_1)$.

\textbf{The open book condition:} Let $P$ be a non-empty connected poset. We say that the ordered pairs $(h_1,h_2),(k_1,k_2)\in\Lambda(P)\times\Lambda(P)$ are linked by an open book condition if there exist $f,g,e\in\Adj P$ such that $(f,e)\models h_1\preceq k_1$ and $(g,e)\models h_2\preceq k_2$. We write this as $(h_1,h_2)\preceq^o(k_1,k_2)$. Note that $\preceq^o$ satisfies coordinate symmetry.
\end{definitions}

\begin{pro}
Let $P$ be a connected poset. Then
\begin{equation*}
\Lambda(P\sqcup P)=\mbox{Fold}(\Lambda(P)^2,\preceq)\sqcup\mbox{Fold}(\Lambda(P)^2,\preceq^o).
\end{equation*}
\end{pro}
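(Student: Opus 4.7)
The plan is to analyze $\Lambda(P\sqcup P)$ by tracking how each monotone map and each adjunction acts on the two connected components. Since $P$ is connected, every $h\in\Pos(P\sqcup P,P\sqcup P)$ is determined by a function $\sigma_h:\{1,2\}\to\{1,2\}$ together with essential data $h_1,h_2\in\Pos(P,P)$ extracted from the restrictions $h|_{P_i}:P_i\to P_{\sigma_h(i)}$. The earlier proposition on admissible permutations forces $\sigma_f$ to be a bijection for every $f\in\Adj{P\sqcup P}$, and a direct check of the adjunction inequality shows that a component-preserving $f$ is exactly a pair in $\Adj P\times\Adj P$, while a component-swapping $f$ is specified by two adjunctions $a\dashv d$ and $b\dashv c$ in $P$ read off from its crossed restrictions.

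Applying $\sigma$ to the two taxotopy identities yields $\sigma_k\circ\sigma_f=\sigma_g\circ\sigma_h$, and since $\sigma_f,\sigma_g$ are bijections, $\sigma_k$ is bijective exactly when $\sigma_h$ is. Consequently $\Pos(P\sqcup P,P\sqcup P)$ decomposes into two $\preceq$-closed subsets---the ``bijective'' maps with $\sigma\in\{\mathrm{id},\mathrm{swap}\}$, and the ``constant'' maps with $\sigma$ constant at $1$ or $2$---which yields the disjoint union on the right-hand side.

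For the bijective piece, the swap automorphism of $P\sqcup P$ is a self-inverse adjunction, so $h\approx h\circ\mathrm{swap}$ and $h\approx\mathrm{swap}\circ h$; conjugating gives the identification $h\approx\mathrm{swap}\circ h\circ\mathrm{swap}$, which, for $\sigma_h=\mathrm{id}$, transposes the data to $(h_2,h_1)$. Thus every class has a preserving representative modulo data transposition. Restricting to preserving $h,k$, the $\sigma$-equation forces $\sigma_f=\sigma_g$; the both-$\mathrm{id}$ case unpacks $k\preceq h$ into independent relations $(f_i,g_i)\models k_i\preceq h_i$ in $\Lambda(P)$, giving the product order, whereas the both-$\mathrm{swap}$ case gives independent $k_2\preceq h_1$ and $k_1\preceq h_2$ from the crossed data of $f,g$, i.e.\ the product order on the transposed pair. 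Mixed-parity witnesses force $k$ to swap and, after the swap identification, reproduce the same pair of relations with different underlying adjunctions. Hence the first piece is $\mbox{Fold}(\Lambda(P)^2,\preceq)$.

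For the constant piece, the codomain swap normalizes both $h$ and $k$ to be constant at $1$, after which the $\sigma$-equation forces $\sigma_g=\mathrm{id}$ so that $g=g_1\sqcup g_2$; crucially only $g_1$ enters the equations because every image lies in $P_1$. When $f$ is preserving, the equations unpack to $(f_1,g_1)\models k_1\preceq h_1$ and $(f_2,g_1)\models k_2\preceq h_2$ with a \emph{common} codomain adjunction $g_1$, which is exactly $(k_1,k_2)\preceq^o(h_1,h_2)$. When $f$ is swapping, the same analysis produces the open-book relation on the transposed pair, which the fold $h\approx h\circ\mathrm{swap}$---arising from the domain swap acting on constant-$\sigma$ maps---absorbs. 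The main obstacle is the meticulous bookkeeping: one must verify for each of the two pieces that every combination of $(\sigma_f,\sigma_g)$ either yields the claimed relation (product or open-book) or, via the appropriate swap identification, reduces to one that does, and check that no relations leak across the bijective/constant divide.
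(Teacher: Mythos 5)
Your proposal is correct and follows essentially the same route as the paper: decompose $\Pos(P\sqcup P,P\sqcup P)$ according to whether the two components land in the same or different components of the codomain, use the swap involution $r$ to obtain the folding, and unpack the taxotopy squares into the product order in the component-separating case and the open-book order (common codomain adjunction) in the component-collapsing case. Your explicit tracking of the component-permutation data $\sigma$ just makes precise the steps the paper leaves as ``easy to see.''
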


\begin{proof}
Since the image of a connected component of a poset under a monotone map is contained in a connected component of the codomain, we can think of a map $h=h_1\sqcup h_2\in\Pos(P\sqcup P,P\sqcup P)$ as a pair of maps $h_1,h_2\in\Pos(P,P)$ by corestriction. We can classify maps in $\Pos(P\sqcup P,P\sqcup P)$ into two classes depending on whether the images of $h_1$ and $h_2$ are included in the same connected component of the codomain. It is easy to see that there are no taxotopy relations between maps from distinct classes.

For simplicity, we name the components of $P\sqcup P$ as $P_1$ and $P_2$. Let $r$ be the unique involution which is the coproduct of $\id P:P_1\to P_2$ and $\id P:P_2\to P_1$.

Case I: Suppose $h=h_1\sqcup h_2$ and $k=k_1\sqcup k_2$ with $h_1\in\Pos(P_1,P_1)$ and $h_2,k_2\in\Pos(P_2,P_2)$. Then $h\preceq k$ in $\Lambda(P\sqcup P)$ if and only if $h_1\preceq k_1$ and $h_2\preceq k_2$ in $\Lambda(P,P)$. Finally the folding of the poset $\Lambda(P)^2$ comes from the fact that $(h_1\sqcup h_2)\approx(h_1\sqcup h_2)\circ r=(h_2\sqcup h_1)$.

Case II: Suppose $\im h,\im k\subseteq P$. Then $h\preceq k$ in $\Lambda(P\sqcup P)$ if an only if $h\preceq k$ in $\Lambda(P\sqcup P,P)$. One can think of a monotone map in $\Pos(P\sqcup P,P)$ as a pair of maps in $\Pos(P,P)$. But the taxotopy order in $\Lambda(P\sqcup P,P)$ is not the full product order in $\Lambda(P)^2$. In fact it is isomorphic to $\mbox{Fold}(\Lambda(P)^2,\preceq^o)$ where $\preceq^o$ is the order obtained from the open book condition.
\end{proof}

\begin{example}
We illustrate the above result by computing $\Lambda(P\sqcup P)$ where $P=\mathbf 2$. We know that $\Lambda(P)=\mathbf 3$. The folding quotient of $\Lambda(P)^2$ with usual taxotopy order is isomorphic to $C\mathord{\Diamond}$ where $C$ denotes the cone operation. Only the unique non-identity adjunction on $\mathbf 2$ can be used as the middle adjunction $e$ in an open book diagram. Computations reveal that the Hasse diagram for the order $\preceq^o$ on $\mathbf 3\times\mathbf 3$ can be obtained by removing the outer edges of the Hasse diagram for the lattice $\mathbf 3\times\mathbf 3$ and adding in two edges for $(0,0)\prec^o(1,1)\prec^o(2,2)$. The folding quotient of this poset looks like the coproduct $\mathbf X\sqcup\mathbf 1$, where $\mathbf X$ denotes the poset with five points whose Hasse diagram looks like the letter $X$. Therefore $\Lambda(P\sqcup P)=C\mathord{\Diamond}\sqcup\mathbf X\sqcup\mathbf 1$.
\end{example}

The above proposition could be thought of as the special case of $P=P_1\sqcup P_2$ where any ordered pair from the index set $\{1,2\}$ is admissible. The following result deals with the other extreme case.
\begin{pro}
Let $P_1,P_2$ be two connected posets and $P=P_1\sqcup P_2$. If the admissibility preorder on the index set $\{1,2\}$ is the identity relation, then
\begin{equation*}
\Lambda(P)=\left[\Lambda(P_1)\times\Lambda(P_2)\sqcup\Lambda(P_1,P_2)\times\Lambda(P_2,P_1)\right]\sqcup\left[\Lambda(P_1\sqcup P_2,P_1)\sqcup\Lambda(P_1\sqcup P_2,P_2)\right].
\end{equation*}
\end{pro}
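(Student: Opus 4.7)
The plan is to exploit the hypothesis that the admissibility preorder on $\{1,2\}$ is the identity relation: by the preceding proposition on admissible permutations, every $f\in\Adj{P}$ induces an admissible permutation of $\{1,2\}$, so both $f^*$ and $f_*$ must preserve the decomposition $P=P_1\sqcup P_2$. Splitting the unit and counit inequalities componentwise then yields a canonical isomorphism $\Adj{P}\cong\Adj{P_1}\times\Adj{P_2}$, and I will write $f=f_1\sqcup f_2$ accordingly throughout.

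Next I would classify the monotone maps $h\in\Pos(P,P)$. Since each $P_i$ is connected and the image of a connected poset under a monotone map lies in a single connected component of the codomain, each $h$ falls into exactly one of the following four types:
\begin{itemize}
\item Type A: $h(P_1)\subseteq P_1$ and $h(P_2)\subseteq P_2$;
\item Type B: $h(P_1)\subseteq P_2$ and $h(P_2)\subseteq P_1$;
\item Type C: $h(P_1)\cup h(P_2)\subseteq P_1$;
\item Type D: $h(P_1)\cup h(P_2)\subseteq P_2$.
\end{itemize}
I would then show that a taxotopy relation $k\preceq h$ is impossible whenever $h,k$ belong to different types: in the commutative square $kf^*=g^*h$, restricting to $P_i$ yields $k(f_i^*(P_i))=g^*(h(P_i))$, and since $g^*$ preserves components, the connected component containing $k(P_i)$ must coincide with the one containing $h(P_i)$; disjointness of $P_1$ and $P_2$ then forces $h$ and $k$ to share the same type.

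For the identification within each type I would restrict both taxotopy squares using the splittings $f=f_1\sqcup f_2$ and $g=g_1\sqcup g_2$. For Type A the square uncouples into one taxotopy square in $\Lambda(P_1)$ and a second in $\Lambda(P_2)$, and the converse direction glues independent witnesses component-by-component; this identifies the Type A piece with $\Lambda(P_1)\times\Lambda(P_2)$. Type B uncouples symmetrically into squares in $\Lambda(P_1,P_2)$ and $\Lambda(P_2,P_1)$, yielding $\Lambda(P_1,P_2)\times\Lambda(P_2,P_1)$. For Types C and D both sides of the square land in a single $P_i$, so only the factor $g_i$ of the codomain adjunction is visible in the equations, while the full $f\in\Adj{P}$ acts on the domain; this is precisely the data defining $\Lambda(P_1\sqcup P_2,P_i)$.

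The main obstacle I anticipate is verifying that the Types C and D pieces realize \emph{all} of $\Lambda(P_1\sqcup P_2,P_i)$ rather than a sub-preorder. Restriction from $\Lambda(P)$ is immediate, but the converse requires that, given a witness $(e,g_i)\models k\preceq h$ in $\Lambda(P_1\sqcup P_2,P_i)$, one produce a genuine adjunction on $P$ whose $i$-component equals $g_i$; here the product decomposition $\Adj{P}\cong\Adj{P_1}\times\Adj{P_2}$ allows us to pad $g_i$ with the identity adjunction on the other component, and because $h(P),k(P)\subseteq P_i$ the padded half of the taxotopy square is trivially satisfied. A final bookkeeping check that the four classes are pairwise order-isolated, together with the identifications above, assembles the four posets into the claimed disjoint union.
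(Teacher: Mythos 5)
Your proof is correct and follows essentially the argument the paper intends: the proposition is stated there without proof, as the analogue of the preceding computation of $\Lambda(P\sqcup P)$, and your four-way classification by component pattern, the decoupling of the taxotopy squares via $\Adj{P}\cong\Adj{P_1}\times\Adj{P_2}$, and the padding-by-identity argument for the one-component types supply exactly the intended details. Note that, unlike the $\Lambda(P\sqcup P)$ case, no folding or open-book analysis is needed here precisely because the identity admissibility hypothesis rules out component-swapping adjunctions, which your argument correctly reflects.
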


\section{Seifert-van Kampen theorem for $\lambda$}
The Seifert-van Kampen theorem for fundamental groups in the homotopy theory of topological spaces states that if a topological space $X$ can be written as the union of two path connected open subspaces $U_1,U_2$ such that $U_1\cap U_2$ is also path connected, then, for $x_0\in U_1\cap U_2$, we have the pushout diagram $\pi_1(X;x_0)=\pi_1(U_;x_0)\ast_{\pi_1(U_1\cap U_2;x_0)}\pi_1(U_2;x_0)$ in the category of groups. The following result provides its analogue in the taxotopy theory of posets.

\begin{theorem}[Seifert-van Kampen theorem for $\lambda$]\label{vanKampenlambda}
If $P_1,P_2$ are subsets of a poset $P$ such that all the order embeddings in the pushout diagram $P=P_1\cup_{P_1\cap P_2}P_2$ are continuous and have the restriction property, then there is a pushout diagram
\begin{equation*}
\lambda(P)=\lambda(P_1)\cup_{\lambda(P_1\cap P_2)}\lambda(P_2)
\end{equation*}
of order embeddings.
\end{theorem}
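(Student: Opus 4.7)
The plan is to exhibit the canonical comparison map from the pushout in $\Pos$ to $\lambda(P)$ and verify that it is an order isomorphism. Continuity of each of the four inclusions in the square induces a commutative square of monotone maps $\lambda(P_1\cap P_2)\to\lambda(P_s)\to\lambda(P)$ for $s=1,2$, so the universal property of the pushout produces a unique monotone map $\phi:\lambda(P_1)\cup_{\lambda(P_1\cap P_2)}\lambda(P_2)\to\lambda(P)$. Together, continuity (order preservation) and the restriction property (order reflection, as noted after the definition of the restriction property) ensure that each of the four induced maps on fundamental posets is already an order embedding, so the pushout is really taken along order embeddings.

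The heart of the argument is the following stability observation: if $f\in\Adj P$ witnesses $x\preceq y$ in $\lambda(P)$, so that $f^*(y)=x$ and $f_*(x)=y$, then by the restriction property $f$ restricts to an adjunction on each of $P_1$, $P_2$, and $P_1\cap P_2$. In particular $x\in P_s$ forces $y=f_*(x)\in P_s$, and dually $y\in P_s$ forces $x=f^*(y)\in P_s$. Hence each of $P_1, P_2, P_1\cap P_2$ is closed under single-step (and therefore, by transitivity of $\preceq$, all) taxotopy relations in $\lambda(P)$, which means the image of $\lambda(P_s)\rightarrowtail\lambda(P)$ is a union of taxotopy-components of $\lambda(P)$, and likewise for the intersection.

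Given this, proving that $\phi$ is an order isomorphism is straightforward. For surjectivity on underlying sets: since $P=P_1\cup P_2$, every $[p]\in\lambda(P)$ has a representative in some $P_s$. For injectivity: if $[x]\in\lambda(P_1)$ and $[y]\in\lambda(P_2)$ satisfy $\phi[x]=\phi[y]$, i.e., $x\approx y$ in $\lambda(P)$, then stability applied to the two witnessing adjunctions forces both $x$ and $y$ into $P_1\cap P_2$, and restricting those witnesses yields $x\approx y$ already inside $\lambda(P_1\cap P_2)$, so $[x]$ and $[y]$ are identified in the pushout. For order reflection: if $\phi[x]\preceq\phi[y]$ in $\lambda(P)$ with $x\in P_s$, stability forces $y\in P_s$, the witnessing adjunction restricts to $P_s$, and we obtain $x\preceq y$ in $\lambda(P_s)$, hence $[x]\preceq[y]$ in the pushout.

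The main obstacle is conceptual rather than computational: one must isolate the stability phenomenon above and see that the restriction property, applied to every adjunction witnessing a taxotopy relation, completely controls how elements can move between $P_1$ and $P_2$. Once this is recognized, the pushout claim becomes almost formal. A secondary technical point is to spell out what ``pushout in $\Pos$ of order embeddings'' concretely means here---the set-theoretic union modulo identification along $\lambda(P_1\cap P_2)$, equipped with the partial order generated by the two given orders---and to verify that no further transitive closure or antisymmetric quotient is needed, which follows because the two pieces embed as unions of components and only meet along a sub-union of components.
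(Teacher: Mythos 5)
Your proof is correct and follows essentially the same route as the paper's: both reduce to the observation that the restriction property forces every adjunction witnessing a relation in $\lambda(P)$ to restrict to $P_1$, $P_2$ and $P_1\cap P_2$, so that taxotopy classes and relations never cross between $P_1\setminus P_2$ and $P_2\setminus P_1$ except through the intersection. The paper verifies the universal property directly (defining the mediating map to an arbitrary cocone by cases), whereas you check that the comparison map from the concrete pushout is an order isomorphism; your explicit ``stability observation'' is precisely the point that the paper's terse well-definedness and monotonicity check leaves implicit.
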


\begin{proof}
Since all order embeddings in the pushout diagram $P=P_1\cup_{P_1\cap P_2}P_2$ are continuous and have the restriction property, we get induced order embeddings by applying $\lambda(-)$.

It only remains to show that $\lambda(P)$ is indeed the pushout of the diagram $\lambda(P_1)\leftarrowtail\lambda(P_1\cap P_2)\rightarrowtail\lambda(P_2)$. Let $\lambda(P_1)\xrightarrow{h_1} R\xleftarrow{h_2}\lambda(P_2)$ be any cocone over this diagram in $\Pos$. We define $h:\lambda(P)\to R$ by $h(p):=\begin{cases}h_1(p)&\mbox{if }p\in P_1\\h_2(p)&\mbox{otherwise}\end{cases}$. Since $h_1(p)=h_2(p)$ for $p\in P_1\cap P_2$, we get that $h$ is well-defined. It is clear that $h_1$ and $h_2$ factor through $h$ and $h$ is the unique map with this property.
\end{proof}

One cannot expect to obtain analogous result when $\lambda(-)$ is replaced by $\Lambda(S,-)$ with $S\neq\mathbf 1$ for the image of $S$ under a monotone map may intersect both $P_1$ and $P_2$ but may not be contained in $P_1\cap P_2$. We will see in Corollary \ref{vanKampenLambdastandard} that one needs to reverse the arrows in the original diagram to get a similar result for $\Lambda(S,-)$ under different hypotheses.

\begin{rmk}
The restriction hypothesis on the inclusions in the above theorem is necessary. For a counterexample consider $P\sqcup P$ expressed as the pushout $P\cup_\emptyset P$. Proposition \ref{lambdadisjoint} gives that $\lambda(P\sqcup P)=\lambda(P)$ but $\lambda(P)\neq\lambda(P)\cup_{\lambda(\emptyset)}\lambda(P)$! This happens because the inclusions $P\rightarrowtail P\sqcup P$ do not have the restriction property.

In fact we shall give up continuity in the next sections but retain the restriction property to obtain a van Kampen theorem for $\Lambda(S,-)$ when $S\neq\mathbf 1$.
\end{rmk}

\section{Covering a poset}
Our topological intuition says that one could compute $\Lambda(S,-)$ of a poset out of $\Lambda(S,-)$ of its subsets provided those subsets cover the original poset ``nicely''. This section introduces the notion of a cover of a poset that allows us to construct an adjunction out of its restrictions at all elements of the cover.
\begin{definition}
Let $P$ be a nonempty poset. A collection $\mathcal Q$ of subsets of $P$ is said to be a \textbf{cover} if it satisfies the following properties.
\begin{itemize}
\item $\emptyset\notin\mathcal Q\subseteq\Subw P$.
\item $\mathcal Q$ is closed under nonempty finite intersections and $\bigvee\mathcal Q=P$.
\item For each $Q'\subseteq Q$ in $\mathcal Q$, the inclusion $Q'\rightarrowtail Q$ is in $\mathcal R$.
\end{itemize}
\end{definition}

For the most general form of the van Kampen theorem, Theorem \ref{vanKampengen}, we need a stronger form of a cover.
\begin{definition}
We say that a cover $\mathcal Q$ of a poset $P$ is \textbf{chain-compact} if for every maximal chain $I$ in $P$ (existence by Zorn's lemma) there exists a finite subset $\mathcal Q'$ of $\mathcal Q$ such that
\begin{itemize}
\item $I\subseteq\bigcup\mathcal Q'$,
\item for each $a<b$ in $Q\cap I$, $[a,b]\cap I\subseteq Q$,
\item for each $a<b$ in $I$, if there does not exist any $Q\in\mathcal Q$ with $a,b\in Q$, then there is a finite subchain $a=p_0<p_1<\hdots<p_l=b$ of $I$ such that for each $1\leq i\leq l$, there is some $Q\in\mathcal Q'$ such that $[p_{i-1},p_i]\cap I\subseteq Q$.
\end{itemize}
\end{definition}
Unlike the usual use of the word ``compact'', here we attribute this adjective to a cover rather than the poset.

The third condition in this definition says that all $\leq$-signs are finitely covered and therefore there could be finite covers of posets which are not chain-compact! For example, consider the poset consisting of exactly four points and whose Hasse diagram looks like $\Join$. Two subsets consisting of the maximal and minimal points form its cover. But this finite cover is not chain-compact since the inclusions of minimal elements in the maximal elements are not contained in any of the covers.

\begin{definition}
Let $\mathcal Q$ be a cover of $P$.

A family of maps $(k_Q:Q\to Q)_{Q\in\mathcal Q}$ is said to be \textbf{compatible} if whenever $Q'\subseteq Q$ in $\mathcal Q$ we have $k_{Q'}(q')=k_Q(q')$ for each $q'\in Q'$.

A family of adjunctions $(f_Q\in\Adj Q)_{Q\in\mathcal Q}$ is said to be \textbf{compatible} if whenever $Q'\subseteq Q$ in $\mathcal Q$ we have $(f_Q)\mathord{\mid}_{Q'}=f_{Q'}$.

A cover $\mathcal Q$ of $P$ is said to have the \textbf{extension property} if for every compatible family $\{f_Q\in\Adj Q:Q\in\mathcal Q\}$ of adjunctions, there exists $f\in\Adj P$ with $f\mathord{\mid}_Q=f_Q$ for each $Q\in\mathcal Q$.
\end{definition}

\begin{pro}\label{compactextension}
A chain-compact cover $\mathcal Q$ of a poset $P$ has the extension property.
\end{pro}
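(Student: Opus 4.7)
The plan is to define the global adjoints $f^*, f_*:P\to P$ pointwise from the given compatible family $(f_Q)_{Q\in\mathcal Q}$, check well-definedness using intersection-closure of the cover, derive monotonicity from chain-compactness, and finally deduce the unit/counit inequalities from their local versions.

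The first step is a straightforward gluing. For each $p\in P$, pick any $Q\in\mathcal Q$ containing $p$ (such a $Q$ exists since $\bigvee\mathcal Q=P$) and set $f^*(p):=f_Q^*(p)$, and dually $f_*(p):=f_{Q*}(p)$. Independence of the choice of $Q$ follows because any other $Q'\ni p$ produces $Q\cap Q'\in\mathcal Q$ (closure under finite intersections), and compatibility of the family then forces the two candidate values to agree through $f_{Q\cap Q'}$. Note that $f^*(p),f_*(p)\in Q$ whenever $p\in Q$, since each $f_Q$ is an endo-adjunction on $Q$; this is what will let us evaluate compositions like $f_*\circ f^*$ inside a single piece.

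The main obstacle is monotonicity of $f^*$ (the case of $f_*$ being dual), because this is the only statement whose hypothesis and conclusion may lie in completely different pieces of the cover. My plan is: given $p\leq q$ in $P$, extend $\{p,q\}$ to a maximal chain $I$ via Zorn's lemma, and feed $I$ into chain-compactness to obtain a finite $\mathcal Q'\subseteq\mathcal Q$ satisfying the three bullets. If $p$ and $q$ already share a common covering piece $Q\in\mathcal Q$, then monotonicity of $f_Q^*$ transfers directly to $P$. Otherwise the third bullet furnishes a finite subchain $p=p_0<p_1<\cdots<p_l=q$ of $I$ such that each consecutive pair lies in a common $Q_i\in\mathcal Q'$; applying the single-piece case along this chain and using transitivity in $P$ delivers $f^*(p)\leq f^*(q)$. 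This is exactly where chain-compactness is genuinely needed---without a finite bridging subchain, an order relation in $P$ need not be witnessed locally.

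The adjunction identities then follow locally: for $p\in P$ and $Q\in\mathcal Q$ containing $p$, the element $f^*(p)$ also lies in $Q$, so $f_*(f^*(p))=f_{Q*}(f_Q^*(p))\geq p$ in $Q$, and the inequality transfers to $P$ because the inclusion $Q\rightarrowtail P$ is an order embedding (being in $\mathcal R$). The dual inequality $f^*f_*\leq\id P$ is analogous, so $f^*\dashv f_*$ as required, and $f\mathord{\mid}_Q=f_Q$ for each $Q\in\mathcal Q$ by construction.
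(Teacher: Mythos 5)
Your proposal is correct and follows essentially the same route as the paper: glue the local adjoints pointwise, prove monotonicity by extending $p\leq q$ to a maximal chain and using the finite bridging subchain supplied by chain-compactness, and verify the unit/counit inequalities locally. You actually spell out two details the paper leaves implicit (well-definedness via closure under intersections, and the local verification of $f^*\dashv f_*$), which is a welcome addition rather than a divergence.
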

\begin{proof}
Let $\mathcal Q$ be a chain-compact cover of $P$. We show that whenever $(k_Q:Q\to Q)_{Q\in\mathcal Q}$ is a compatible family of monotone maps, the function $k:P\to P$ defined by $k(p):=k_Q(p)$ for $p\in Q$ is monotone.

Suppose $p<p'$ in $P$. We prove that $k(p)\leq k(p')$.

If $p,p'\in Q$ for some $Q\in\mathcal Q$, we are done using monotonicity of $k_Q$.

Otherwise, assuming Zorn's lemma, there is a maximal chain $I$ in $P$ with $p,p'\in I$. Since $\mathcal Q$ is chain-compact, there is a finite subset $\mathcal Q'$ of $\mathcal Q$ and a finite subchain $p=p_0<p_1<\hdots<p_l=b$ of $I$ such that for each $1\leq i\leq l$, there is some $Q_i\in\mathcal Q'$ such that $[p_{i-1},p_i]\cap I\subseteq Q_i$. So we obtain $k(p)=k_{Q_1}(p_0)\leq k_{Q_1}(p_1)=k_{Q_2}(p_1)\leq k_{Q_2}(p_2)=k_{Q_3}(p_2)\leq\hdots\leq k_{Q_l}(p_l)=k(p')$.

Since left and right adjoints from a compatible family of adjunctions form compatible families of monotone maps, by above process we can obtain two monotone maps in $\Pos(P,P)$ which could be verified to be adjoint to each other.
\end{proof}

The following result will be useful in the next section.
\begin{pro}\label{compactcomparison}
Suppose $S\subseteq\mathbb Z$, $P\in\Pos$ and $h\preceq h'$ in $\Lambda(S,P)$. Then $\im h$ has a maximum element if and only if $\im{h'}$ has a maximum element.
\end{pro}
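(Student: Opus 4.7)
The plan is to fix witnessing adjunctions $f \in \Adj S$ and $g \in \Adj P$ satisfying $h \circ f^* = g^* \circ h'$ and $h' \circ f_* = g_* \circ h$, and to construct an explicit maximum of one image from a maximum of the other by means of the standard unit and counit inequalities $\id{P} \leq g_* \circ g^*$ and $g^* \circ g_* \leq \id{P}$ on $P$, together with their analogues on $S$.

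For the forward implication, suppose $M = h(s_0)$ is the maximum of $\im h$. I would show that $g_*(M) = h'(f_*(s_0))$, which already lies in $\im{h'}$, is its maximum. For any $s \in S$, the element $g^*(h'(s)) = h(f^*(s))$ belongs to $\im h$ and is therefore bounded above by $M$; applying the monotone $g_*$ yields $g_*(g^*(h'(s))) \leq g_*(M)$, and the unit inequality $h'(s) \leq g_*(g^*(h'(s)))$ completes the estimate. This half is purely formal and makes no use of the hypothesis $S \subseteq \mathbb Z$.

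The converse is the main obstacle: the analogous argument starting from $g_*(h(s)) = h'(f_*(s)) \leq M'$ and applying $g^*$ only yields $g^*(g_*(h(s))) \leq g^*(M')$, after which the counit inequality $g^* \circ g_* \leq \id{P}$ goes the wrong way for comparing $h(s)$ with $g^*(M')$. To bridge this gap I would exploit that $S \subseteq \mathbb Z$ is totally ordered. If $S$ has a maximum, the claim is immediate from monotonicity of $h$; otherwise, since $h'$ is monotone and $\im{h'}$ has maximum $M' = h'(s_0')$, the map $h'$ must be eventually constant at $M'$ on $\{s \in S : s \geq s_0'\}$, whence $h(f^*(s)) = g^*(M')$ for every such $s$. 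The key remaining step is to show that $\im{f^*}$ is cofinal in $S$: an upper bound $N \in S$ for $\im{f^*}$ would, via the adjunction $f^* \dashv f_*$ on $S$, force $s \leq f_*(N)$ for all $s \in S$, contradicting the assumption that $S$ has no maximum. Since $S$ is totally ordered, cofinality allows me, for each $s \in S$, to choose $s'' \in S$ with $f^*(s'') \geq s$ and then set $s' = \max(s'', s_0')$, so that $s' \geq s_0'$ and $f^*(s') \geq s$; monotonicity of $h$ then gives $h(s) \leq h(f^*(s')) = g^*(M')$. Therefore $g^*(M') = h(f^*(s_0'))$ lies in $\im h$ and is its maximum.
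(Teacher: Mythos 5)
Your proof is correct, and it takes a genuinely different route from the paper's. The paper (writing $e\in\Adj S$, $f\in\Adj P$ for what you call $f$ and $g$) works with the fixed sets $I:=\im h\cap\im{g^*}$ and $I':=\im{h'}\cap\im{g_*}$, uses the order isomorphism $I\to I'$ induced by the adjunction on $P$ together with cofinality of $I'$ in $\im{h'}$, and then argues by cases on whether $S$ has a maximum, whether $h,h'$ are constant, and whether $\im{h'}$ has a maximum; the last case also invokes the adjunction on $S$ in a rather delicate way. You instead name the candidate maxima outright --- $g_*(M)=h'(f_*(s_0))$ for the forward direction and $g^*(M')=h(f^*(s_0'))$ for the converse --- and verify the upper-bound property directly: from $g^*(h'(s))=h(f^*(s))\leq M$ and the unit inequality $\id P\leq g_*g^*$ in one direction, and from eventual constancy of $h'$ above $s_0'$ together with cofinality of $\im{f^*}$ in $S$ in the other. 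This buys two things. First, your forward implication is completely formal and holds for arbitrary posets $S$ and $P$. Second, your converse uses only that $S$ is a total order without a maximum (so that an upper bound for $\im{f^*}$ would force $f_*(N)=\max S$, and binary maxima $\max(s'',s_0')$ exist), so your argument in fact establishes the proposition for any totally ordered $S$, whereas the paper's route leans on $S\subseteq\mathbb Z$ to identify $(\max I)^\uparrow\cap\im h$ with $\mathbb N$. Your proof is also arguably cleaner at the one point where the paper's write-up is obscure (the appeal to ``$e_*(p)$ for $p>\max I$ in $P$'', which does not typecheck as written).
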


\begin{proof}
Let $h\preceq h'$ in $\Lambda(S,P)$. Suppose $e\in\Adj S,f\in\Adj P$ witness this relation. Let $I:=\im h\cap\im{f^*}$ and $I':=\im{h'}\cap\im{f_*}$. Then the adjunction $f$ induces an order isomorphism $I\to I'$.

If $S$ has a maximum element, then $h(\max S)$ and $h'(\max S)$ are maximum elements in $\im h$ and $\im{h'}$ respectively and hence there is nothing to be proven.

Suppose that $S$ has no maximum element. It can be easily shown that $h$ is constant if and only if $h'$ is constant. So we assume that both $h,h'$ are non-constant. Note that $I'$ is cofinal in $\im{h'}$. There are two cases.

If $\im{h'}$ does not have a maximum element, then so does $I'$ and hence $I$. If $\im h$ has a maximum element, then $f^*f_*(\max\im h)$ is the largest element of $I$, which does not exist. Hence $\im h$ does not have a maximum element.

If $\im{h'}$ has a maximum element but $\im h$ does not have a maximum element. By assumptions, $(\max I)^\uparrow\cap\im h$ is order isomorphic to $\mathbb N$ and, for each $p$ in this set, we know that $f^*f_*(p)=\max I$.

The set $h'^{-1}(\max I')$ is an upper set in $S$ but $h^{-1}(\max I)$ is not. For any $p>\max I$ in $P$, $e_*(p)$ is an upper bound for the set $h'^{-1}(\max I')$ which is a contradiction. Hence $\im h$ has a maximum element. This completes the proof of both cases and hence of the proposition.
\end{proof}

Duality provides analogue of this result when maximum is replaced by minimum.

\section{Seifert-van Kampen theorem for $\Lambda(S,-)$}
We need a lot of technical machinery to state and prove the main result of this section. For a cover $\mathcal Q$ of a poset $P$, the concepts of a $\mathcal Q$-indexed matching family and the amalgamation property we introduce here are reminiscent of the concepts with the same name in topos theory (cf. \cite[\S III.4]{Moerdijk}) but we repeat them in our context for the convenience of the reader.
\begin{definition}
We say that a nonempty total order $S$ is \textbf{homogeneous} if for any $P\in\Pos$ and $h,k\in\Pos(S,P)$ whenever $\im h=\im k$ then there are $e_1,e_2\in\Adj S$ such that $(e_1,\id P)\models h\preceq k$ and $(e_2,\id P)\models k\preceq h$.
\end{definition}
We have already seen in Corollary \ref{Lisinternal} that $\mathbb Z$ is homogeneous. It can be easily verified that all subsets of $\mathbb Z$ are homogeneous. Whenever we assume $S\subseteq\mathbb Z$ in this section, we actually refer to the fact that $S$ is homogeneous.

\begin{definition}
Given $\emptyset\neq P\in\Pos$, $\emptyset\neq Q\in\Subw P$ and a homogeneous total order $S$, the inclusion map $i:Q\rightarrowtail P$ is said to \textbf{induce taxotopy retracts of $S$-chains} if the following condition is satisfied.

For each $h\in\Pos(S,P)$ form the pullback $S\xleftarrow{j} S\times_PQ\xrightarrow{k}Q$. If $S\times_PQ\neq\emptyset$, then there is a diagonal $\widetilde{h}:S\to Q$ such that $\widetilde{h}\circ j=k$.

If a diagonal $\widetilde{h}$ exists, then it is referred to as a \textbf{taxotopy retract} of the map $h$ with respect to $Q$.
\end{definition}
A taxotopy retract of $h$ exists only if $\im h\cap Q\neq\emptyset$. Though we do not require uniqueness of the taxotopy retract, since $S$ is homogeneous, the diagonal, if exists, is unique up to taxotopy equivalence.

The inclusion $i:Q\rightarrowtail P$ naturally induces a (monotone) map $i\circ(-):\Pos(S,Q)\to\Pos(S,P)$. The above definition provides a (not necessarily monotone) partial function $\widetilde{(-)}: \Pos(S,P)\rightharpoonup\Pos(S,Q)$ such that for each $k\in\Pos(S,Q)$ we have $\widetilde{i\circ k}=k$.

\begin{rmk}
If the taxotopy retract $\widetilde{h}\in\Pos(S,Q)$ of $h\in\Pos(S,P)$ exists, then for each $s\in S$ the element $\widetilde{h}(s)$ must be equal to either $\max\{h(s^\downarrow)\cap Q\}$ or $\min\{h(s^\uparrow)\cap Q\}$.

If $S\subseteq\mathbb Z$, then both these elements exist unless the sets are empty, and we can define $\widetilde{h}:S\to Q$ as follows.
\begin{equation*}
\widetilde{h}(s):=\begin{cases}\max\{h(s^\downarrow)\cap Q\}&\mbox{if }\{h(s^\downarrow)\cap Q\}\neq\emptyset,\\\min\{h(s^\uparrow)\cap Q\}&\mbox{otherwise}.\end{cases}
\end{equation*}
This construction in fact guarantees that whenever $S\subseteq\mathbb Z$, each inclusion of posets induces taxotopy retracts of $S$-chains irrespective of whether it has the restriction property.
\end{rmk}

\begin{pro}\label{retractnormal}
Suppose $S\subseteq\mathbb Z$ and $Q\in\Subw P$. Given $S$-chains $\alpha,\beta$ in $P$, if $\alpha\preceq\beta$ in $\Lambda(S,P)$ then $\alpha\cap Q\neq\emptyset$ if and only if $\beta\cap Q\neq\emptyset$. If both $\alpha\cap Q,\beta\cap Q$ are nonempty, then $(\alpha\cap Q)\preceq(\beta\cap Q)$ in $\Lambda(S,Q)$.
\end{pro}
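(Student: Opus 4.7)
The plan is to exploit the restriction $f\mathord{\mid}_Q\in\Adj Q$ afforded by $Q\in\Subw P$, which gives $f^*(Q), f_*(Q)\subseteq Q$. Let $(e,f)$ witness $\alpha\preceq\beta$ in $\Lambda(S,P)$. The first assertion is immediate: the commutativity squares $\alpha\circ e^*=f^*\circ\beta$ and $\beta\circ e_*=f_*\circ\alpha$ transport membership in $Q$ in both directions, so $\alpha\cap Q\neq\emptyset\iff\beta\cap Q\neq\emptyset$; in particular, writing $A:=\alpha^{-1}(Q)$ and $B:=\beta^{-1}(Q)$, the same argument refines to the set inclusions $e^*(B)\subseteq A$ and $e_*(A)\subseteq B$.

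For the second assertion, take $g:=f\mathord{\mid}_Q\in\Adj Q$. For any nonempty $X\subseteq S\subseteq\mathbb Z$ define the monotone projection $r_X^\downarrow(s):=\max(s^\downarrow\cap X)$ when this set is nonempty and $\min(s^\uparrow\cap X)$ otherwise, and symmetrically $r_X^\uparrow$. Both are well-defined (all relevant extrema exist in $\mathbb Z$) and fix $X$ pointwise. Set $e'^*:=e^*\circ r_B^\uparrow$ and $e'_*:=e_*\circ r_A^\downarrow$; their images lie in $e^*(B)\subseteq A$ and $e_*(A)\subseteq B$ respectively. Because $r_A^\downarrow$ fixes $A$ and $r_B^\uparrow$ fixes $B$, the nested compositions collapse, and combining $e_*e^*\geq\id{S}$, $e^*e_*\leq\id{S}$ with the primary-branch inequalities $r_B^\uparrow(s)\geq s$ and $r_A^\downarrow(s)\leq s$ produces $e'_*\circ e'^*\geq\id{S}$ and $e'^*\circ e'_*\leq\id{S}$, so $e'^*\dashv e'_*$ in $\Adj S$.

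By the Remark preceding the proposition the taxotopy retract is unique up to taxotopy equivalence in $\Lambda(S,Q)$, so I may represent $\alpha\cap Q$ and $\beta\cap Q$ by $\widetilde\alpha(s):=\alpha(r_A^\downarrow(s))$ and $\widetilde\beta(s):=\beta(r_B^\uparrow(s))$, each of which agrees with $\alpha, \beta$ on $A, B$ respectively, confirming the retract axiom. Then
\begin{equation*}
g^*(\widetilde\beta(s))=f^*(\beta(r_B^\uparrow(s)))=\alpha(e^*(r_B^\uparrow(s)))=\widetilde\alpha(e'^*(s)),
\end{equation*}
using the taxotopy square of $(e,f)$ at the middle equality and $e'^*(s)\in A$ to collapse $r_A^\downarrow$ at the last. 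The dual equation is symmetric, so $(e',g)\models(\alpha\cap Q)\preceq(\beta\cap Q)$.

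The chief subtlety lies in the fallback branches of $r_A^\downarrow, r_B^\uparrow$: when $s$ is below $\min A$ or above $\max B$, the inequalities $r_A^\downarrow(s)\leq s$ and $r_B^\uparrow(s)\geq s$ can fail and with them the adjunction property of $(e'^*,e'_*)$. Lemma \ref{adjZ} together with the inclusions $e^*(B)\subseteq A$ and $e_*(A)\subseteq B$ forces $A, B$ to share boundedness above and below (since adjunctions on $\mathbb Z$ have images order-isomorphic to $\mathbb Z$), so the fallback regions occur simultaneously on both sides; in those regions $\widetilde\alpha, \widetilde\beta$ become constant, and homogeneity of $S$ permits the necessary adjustments to $e'$ while preserving the adjunction and the taxotopy equations.
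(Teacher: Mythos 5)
Your first statement and the reduction to $g:=f\mathord{\mid}_Q$ are fine, and your verification that the two taxotopy squares commute for $(e',g)$ against the representatives $\widetilde\alpha=\alpha\circ r_A^\downarrow$ and $\widetilde\beta=\beta\circ r_B^\uparrow$ is correct as far as it goes. The genuine gap is exactly the one you name in your last paragraph and then leave open: $e'^*=e^*\circ r_B^\uparrow$ and $e'_*=e_*\circ r_A^\downarrow$ are simply not adjoint once a fallback branch is invoked, and this happens in completely ordinary situations. Take $P=\mathbf V$ (one minimal element $c$ below two incomparable maximal elements), so that $\Adj{\mathbf V}$ consists of the identity and the swap and hence $\{c\}\in\Subw{\mathbf V}$; let $S=\mathbb Z$, let $h=k$ send $(-\infty,0]$ to $c$ and $(0,\infty)$ to a maximal element, and let $e,f$ be identities. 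Then $A=B=(-\infty,0]$, so for $s>0$ the fallback branch gives $e'^*(s)=e^*(0)=0$ and $e'_*e'^*(s)=0<s$, violating $e'_*e'^*\geq\id S$. Your proposed repair does not close this: Lemma \ref{adjZ} describes $\Adj{\mathbb Z}$, not $\Adj S$ for an arbitrary $S\subseteq\mathbb Z$ (for finite $S$ there are constant left adjoints, and $e^*(B)\subseteq A$ then forces nothing about cofinality of $A$), so the claim that $A$ and $B$ ``share boundedness'' is unsupported; and ``homogeneity permits the necessary adjustments'' is an assertion with no construction behind it --- homogeneity only lets you swap a representative for another with the same image at the price of changing the domain adjunction yet again, which threatens the very squares you verified.

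The paper avoids the difficulty entirely by performing no surgery on the domain adjunction: it takes the canonical greatest-lower-bound retracts $h_Q,k_Q$ of the Remark and observes that the original pair $(e,f\mathord{\mid}_Q)$ already witnesses $h_Q\preceq k_Q$. If you want to salvage your route, either drop $e'$ and check the squares for $(e,f\mathord{\mid}_Q)$ directly against those retracts, or treat the cases where $A$ or $B$ fails to be cofinal or coinitial in $S$ separately and exhibit an explicit adjunction on $S$ in each; as written, the second statement is not proved.
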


\begin{proof}
Suppose $h,k$ are maps in $\Pos(S,P)$ with images $\alpha,\beta$ respectively with $\alpha\preceq\beta$. Then there are $f\in\Adj P$ and $e\in\Adj S$ such that $(e,f)\models h\preceq k$, i.e., $(e,f)\models\alpha\preceq\beta$.

Suppose $\alpha\cap Q\neq\emptyset$. Since $Q\in\Subw P$, we know that $f\mathord{\mid}_Q\in\Adj Q$. Then $(f\mathord{\mid}_Q)_*(\alpha\cap Q)\subseteq f_*(\alpha)\cap Q=\beta\cap Q$ and $(f\mathord{\mid}_Q)^*(\beta\cap Q)\subseteq f^*(\beta)\cap Q=\alpha\cap Q$. Since $f^*$ and $f_*$ are bijections when restricted to $\beta$ and $\alpha$ respectively, we get that $(f\mathord{\mid}_Q)_*(\alpha\cap Q)=\beta\cap Q$ and $(f\mathord{\mid}_Q)^*(\beta\cap Q)=\alpha\cap Q$. A similar argument applies if $\beta\cap Q\neq\emptyset$. This proves the first statement.

Suppose $\alpha\cap Q$ and $\beta\cap Q$ are both nonempty. Define the taxotopy retracts $h_Q,k_Q\in\Pos(S,Q)$ of $h,k$ as in the above remark with images $\alpha\cap Q$ and $\beta\cap Q$ respectively. For each $s\in S$, if $h(s)\in Q$ then $h(s)=h_Q(s)$ and if $k(s)\in Q$ then $k(s)=k_Q(s)$. Finally observe that $(e,f\mathord{\mid}_Q)\models h_Q\preceq k_Q$.
\end{proof}

We want to give a uniform definition of the taxotopy retract of each map, and thus we adjoin a bottom element $\bot$ to each poset $P$, which we denote by $P_\bot$.
\begin{definition}
Let $S$ be a homogeneous total order. A family $\{h_Q:S\to Q_\bot\mid Q\in\mathcal Q\}$ of monotone maps indexed by the elements of a cover $\mathcal Q$ of $P$ is a \textbf{matching family} if for each $s\in S$ and $Q'\subseteq Q$ in $\mathcal Q$, if $h_Q(s^\downarrow)\cap Q'$ is nonempty then its maximum exists and  $h_{Q'}(s)$ equals this maximum; otherwise $h_{Q'}(s)=\bot$.
\end{definition}
Note that if $Q'\subseteq Q$ then $h_{Q'}$ is a taxotopy retract of $h_Q$ as intended. In fact the second condition of the definition actually provides a way to construct $h_{Q'}$ from $h_Q$ which could be called the ``greatest lower bound'' construction where $\bot$ is assumed to be the greatest lower bound of the empty set; we denote this assignment by $h_{Q'}=\rho_{Q,Q'}(h_Q)$.

\begin{definition}
A matching family $\{h_Q:S\to Q_\bot\mid Q\in\mathcal Q\}$ indexed by the elements of a cover $\mathcal Q$ of $P$ is said to have the \textbf{amalgamation property} if $\max\{h_Q(s)\mid Q\in\mathcal Q\}$ exists, denoted $p_s$, for each $s\in S$.
\end{definition}

Assuming the amalgamation property for a matching family, we can define the function, called the amalgam $h:S\to P_\bot$ by $h(s):=p_s$. To verify that $h$ is monotone let $s_1\leq s_2$ in $S$. Then there is some $Q\in\mathcal Q$ such that $p_{s_1}=h_Q(s_1)$. Clearly $h_Q(s_1)\leq h_Q(s_2)\leq p_{s_2}$.

\begin{pro}\label{amalgamretract}
Suppose $S$ is a homogeneous total order and $\{h_Q:S\to Q_\bot\mid Q\in\mathcal Q\}$ is a matching family of monotone maps indexed by the elements of a cover $\mathcal Q$ of $P$. Further suppose that the amalgam $h:S\to P_\bot$ for the matching family exists. If $p_s\in Q$ for some $Q\in\mathcal Q$, then $h_Q(s)=p_s$. In particular, for each  $Q\in\mathcal Q$ either $\im h\cap Q=\emptyset$ or $h_Q$ is a taxotopy retract of $h$.
\end{pro}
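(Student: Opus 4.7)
The plan is to deduce $h_Q(s)=p_s$ by sandwiching the value through the intersection $Q\cap Q^*$, where $Q^*\in\mathcal Q$ is chosen so that $h_{Q^*}(s)=p_s$ (such a $Q^*$ exists because the maximum defining $p_s$ is attained). The ``in particular'' assertion will then follow immediately from the main claim applied pointwise.

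First I would observe that $p_s\in Q$ forces $p_s\neq\bot$, so from $h_{Q^*}(s)=p_s$ and the fact that $h_{Q^*}$ takes values in $Q^*_\bot$ we obtain $p_s\in Q^*$, and hence $p_s\in Q\cap Q^*$. Non-emptiness of this intersection together with the closure of $\mathcal Q$ under non-empty finite intersections gives $Q\cap Q^*\in\mathcal Q$. Next I would apply the matching family condition to the pair $Q\cap Q^*\subseteq Q^*$ at $s$: monotonicity of $h_{Q^*}$ makes $p_s=h_{Q^*}(s)$ the maximum of $h_{Q^*}(s^\downarrow)$, and since $p_s$ also lies in $Q\cap Q^*$, the set $h_{Q^*}(s^\downarrow)\cap(Q\cap Q^*)$ is non-empty with maximum $p_s$, so the matching condition yields $h_{Q\cap Q^*}(s)=p_s$. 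Finally I would apply the matching condition to the pair $Q\cap Q^*\subseteq Q$ at $s$: since $h_{Q\cap Q^*}(s)=p_s\neq\bot$, the ``otherwise'' branch is ruled out, so $h_Q(s^\downarrow)\cap(Q\cap Q^*)$ is non-empty with maximum $p_s$; in particular $p_s\leq h_Q(s)$. The reverse inequality $h_Q(s)\leq p_s$ is built into the definition of $p_s$ as a maximum over $\mathcal Q$, and hence $h_Q(s)=p_s$.

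For the consequence, fix $Q\in\mathcal Q$ and assume $\im h\cap Q\neq\emptyset$. For every $s\in S$ with $h(s)\in Q$ we then have $h(s)=p_s\in Q$, so the main claim gives $h_Q(s)=p_s=h(s)$; this is exactly the diagonal equation $h_Q\circ j=k$ on the pullback $S\times_P Q$ that characterises $h_Q$ as a taxotopy retract of $h$ with respect to $Q$. The only subtlety in the whole argument is the step $Q\cap Q^*\in\mathcal Q$, which is why exhibiting $p_s$ as an explicit common element of $Q$ and $Q^*$ is the key maneuver; after that, the two applications of the matching condition transport the value $p_s$ through the inclusions $Q^*\supseteq Q\cap Q^*\subseteq Q$ without further issue.
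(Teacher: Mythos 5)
Your proof is correct and follows essentially the same route as the paper's: both arguments pivot on the intersection $Q\cap Q^*\in\mathcal Q$ (using closure of the cover under nonempty finite intersections), establish $h_{Q\cap Q^*}(s)=p_s$ via the matching condition for $Q\cap Q^*\subseteq Q^*$, and then transport this through the inclusion $Q\cap Q^*\subseteq Q$ to get $p_s\in h_Q(s^\downarrow)$ and hence $p_s\leq h_Q(s)\leq p_s$. Your handling of the ``in particular'' clause, identifying $h_Q$ with the diagonal of the pullback, is also consistent with (and slightly more explicit than) the paper's.
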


\begin{proof}
By definition, the element $p_s$ equals $h_Q(s)$ for some $Q\in\mathcal Q$. We claim that if $p_s\in Q'$ for some $Q'\in\mathcal Q$, then $h_{Q'}(s')=p_s$ for some $s'\leq s$.

Since $p_s\in Q\cap Q'$, the definition of the matching family says that $h_{Q\cap Q'}(s)=h_Q(s)=p_s$. Since $h_{Q'}(s^\downarrow)\cap Q\neq\emptyset$, we again have $p_s=h_{Q\cap Q'}(s)=\max\{h_{Q'}(s^\downarrow)\cap Q\}$, which in turn gives $s'\leq s$ such that $h_{Q'}(s')=p_s$. This completes the proof of the claim.

Now $p_{s'}\geq h_{Q'}(s')$ by definition but the latter equals $p_s$. Hence $p_{s'}=p_s$. Therefore $p_s=h_{Q'}(s')\leq h_{Q'}(s)\leq p_s$ implies that $h_{Q'}(s)=p_s$. The final statement is clear.
\end{proof}

It follows immediately from the proof above that each $h_Q$ can be obtained by applying the greatest lower bound construction to $h$; we denote this by $h_Q=\rho_Q(h)$. If one wants to define a total function $\rho_Q:\Pos(S,P_\bot)\to\Pos(S,Q_\bot)$, then every bounded above subset of $S$ should contain a maximum which is guaranteed if $S\subseteq\mathbb Z$.

Therefore if $S\subseteq\mathbb Z$ then the assignment $h\in\Pos(S,P_\bot)\mapsto(\rho_Q(h):S\to Q_\bot)_{Q\in\mathcal Q}$ defines a bijection between $\Pos(S,P_\bot)$ and all $\mathcal Q$-indexed matching families with amalgamation property, where the inverse takes a matching family to its amalgam. Furthermore this bijection restricts to a bijection between $\Pos(S,P)$ and the set of all matching families satisfying the following condition: for each $s\in S$ there is $Q\in\mathcal Q$ with $h_Q(s)\neq\bot$.

\begin{pro}\label{compactrestriction}
Let $S\subseteq\mathbb Z$ and $\mathcal Q$ be a chain-compact cover of $P$. Suppose $(h_Q:S\to Q_\bot)$ is a matching family with amalgamation $h:S\to P_\bot$. Then $\im h$ has no maximum element if and only if, for some $Q\in\mathcal Q$, $\im{h_Q}$ has no maximum element.
\end{pro}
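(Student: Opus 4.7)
My plan is to prove the two directions separately by contrapositive, using as a preliminary identification the equality $\im{h_Q} \setminus \{\bot\} = \im h \cap Q$ whenever $\im h \cap Q$ is nonempty. This follows from Proposition \ref{amalgamretract}: there $h_Q$ is a taxotopy retract of $h$ with $h_Q(s) = \max(h(s^\downarrow) \cap Q)$ whenever the latter is nonempty, so for any $s$ with $h(s) \in Q$, monotonicity of $h$ forces $h_Q(s) = h(s)$; when $\im h \cap Q = \emptyset$ the image $\im{h_Q}$ is contained in $\{\bot\}$, which trivially has a maximum. Consequently $\im{h_Q}$ has a maximum if and only if $\im h \cap Q$ is empty or has a maximum.

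\emph{Backward direction} (some $\im{h_Q}$ has no maximum implies $\im h$ has no maximum). I would prove the contrapositive. Suppose $\im h$ has a maximum $p$. The crucial fact is that since $h : S \to P$ is monotone with $S \subseteq \mathbb Z$, the chain $\im h$ is order-isomorphic to the quotient of $S$ by the equivalence $s \sim s'$ iff $h(s) = h(s')$, whose classes are intervals of $S$. Every subset of $\mathbb Z$ has order type one of finite, $\mathbb N$, $\mathbb N^{op}$, or $\mathbb Z$, and a quotient by interval classes preserves this property; hence $\im h$ itself has order type from this list, and in each of them every nonempty bounded-above subset attains its supremum. Applying this to $\im h \cap Q \subseteq \im h$, which is bounded above by $p$, yields the required maximum. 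Chain-compactness is not needed here.

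\emph{Forward direction} ($\im h$ has no maximum implies some $\im{h_Q}$ has no maximum). I again take the contrapositive and assume each $\im{h_Q}$ has a maximum. Extend the chain $\im h$ to a maximal chain $I$ in $P$ by Zorn's lemma. Chain-compactness delivers a finite $\mathcal Q' \subseteq \mathcal Q$ covering $I$. Intersecting with $\im h \subseteq I$ yields the finite decomposition $\im h = \bigcup_{Q \in \mathcal Q'}(\im h \cap Q)$. By the preliminary identification, each nonempty summand has a maximum. A finite union of subchains of the chain $\im h$, each with a maximum, has a maximum equal to the largest of the summand maxima, so $\im h$ has a maximum, contradicting the hypothesis.

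The main obstacle is the forward direction, where chain-compactness is essential for compressing a potentially infinite cover into a finite one; without it, the collection of summand maxima might itself fail to attain a supremum, even if every summand has one. The backward direction, by contrast, reduces cleanly to a structural observation about subsets of $\mathbb Z$ once the preliminary identification is in hand.
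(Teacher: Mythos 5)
Your argument is correct and follows essentially the same route as the paper: the identification $\im{h_Q}\setminus\{\bot\}=\im h\cap Q$ coming from the greatest-lower-bound formula of Proposition \ref{amalgamretract}, chain-compactness only for the direction in which $\im h$ lacks a maximum, and the discrete attained-supremum property of chains arising as interval quotients of subsets of $\mathbb Z$ for the other. Your backward direction merely spells out what the paper compresses into ``the sequence $(h_Q(s))_{s\in S}$ will eventually stabilize,'' and your forward direction is the contrapositive of the paper's cofinality argument; no gaps.
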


\begin{proof}
Suppose $\im h$ has no maximum element. By chain-compactness of $\mathcal Q$, there exists finitely many $Q_i$ whose union covers $\im h$. There is at least one $Q_i$ such that $Q_i\cap\im h$ is cofinal in $\im h$. In other words, $\im{h_{Q_i}}$ has no maximum element.

For converse, suppose that $\max\im h$ exists. For each $Q\in\mathcal Q$, we have $h_Q(s):=\max\{h(s^\downarrow)\cap Q\}$ by Proposition \ref{amalgamretract}. Thus the sequence $(h_Q(s))_{s\in S}$ will eventually stabilize and hence $\max\im{h_Q}$ exists.
\end{proof}

Duality provides analogue of this result when $\max(-)$ is replaced by $P\cap\min(-)$.

We can think of the poset $(\mathcal Q,\subseteq)$ as a category. The assignment $Q\mapsto\Pos(S,Q_\bot)$ together with the maps $(Q'\subseteq Q)\mapsto(\rho_{Q,Q'}:\Pos(S,Q_\bot)\to\Pos(S,Q'_\bot))$ can be thought of as a diagram in $\Pos$ of shape $\mathcal Q^{op}$, i.e., a functor $F_S:\mathcal Q^{op}\to\mathrm{Pos}$.
\begin{lemma}\label{limpos}
Let $S\subseteq\mathbb Z$ and $\mathcal Q$ be a cover of a nonempty poset $P$. If every $\mathcal Q$-indexed matching family of $S$-chains has the amalgamation property, then
\begin{equation*}
\Pos(S,P_\bot)=\varprojlim F_S.
\end{equation*}
\end{lemma}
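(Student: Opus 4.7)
The plan is to identify $\varprojlim F_S$ concretely with the poset of $\mathcal Q$-indexed matching families, and then invoke the bijection between matching families (with amalgamation) and $\Pos(S,P_\bot)$ already noted in the paragraph preceding the lemma.

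First I would unpack the limit. Since limits in $\Pos$ are computed as in $\Set$ with the order inherited pointwise from the product, $\varprojlim F_S$ is the subposet of $\prod_{Q\in\mathcal Q}\Pos(S,Q_\bot)$ whose underlying set is the collection of families $(h_Q)_{Q\in\mathcal Q}$ satisfying $h_{Q'}=\rho_{Q,Q'}(h_Q)$ whenever $Q'\subseteq Q$ in $\mathcal Q$. Unwinding the greatest-lower-bound definition of $\rho_{Q,Q'}$, this compatibility condition is literally the matching family condition stated before the definition of the amalgamation property.

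Next I would define $\Phi:\Pos(S,P_\bot)\to\varprojlim F_S$ by $\Phi(h):=(\rho_Q(h))_{Q\in\mathcal Q}$ and its candidate inverse $\Psi$ by sending a matching family to its amalgam, which exists by hypothesis. That $\Phi(h)$ lies in the limit follows from the associativity of the greatest-lower-bound construction: applying $\rho_Q$ and then $\rho_{Q,Q'}$ agrees with applying $\rho_{Q'}$ directly. Proposition \ref{amalgamretract} gives $\Phi\circ\Psi=\mathrm{id}$, while $\Psi\circ\Phi=\mathrm{id}$ follows because, for each $s\in S$, either $h(s)=\bot$ (in which case each $\rho_Q(h)(s)=\bot$ and the amalgam recovers $\bot$) or $h(s)\in Q$ for some $Q\in\mathcal Q$, in which case $\rho_Q(h)(s)=h(s)$ dominates every other $\rho_{Q'}(h)(s)$.

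Finally, I would promote this set-theoretic bijection to an isomorphism of posets. Monotonicity of $\Psi$ is immediate: a componentwise inequality of matching families passes through the suprema defining the amalgams. The reverse direction, namely that $h\leq h'$ pointwise forces $\rho_Q(h)\leq\rho_Q(h')$ pointwise for every $Q\in\mathcal Q$, is the main technical point I expect to be the obstacle; here one needs to combine the hypothesis $Q\in\Subw P$, the amalgamation assumption, and the total-order structure of $S\subseteq\mathbb Z$ to control how values of $h$ can enter and leave $Q$ along a pointwise comparison, and so ensure that the greatest-lower-bound prescription is compatible with the pointwise order.
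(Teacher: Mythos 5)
Your argument is essentially the paper's, repackaged. The paper verifies the universal property directly: given an arbitrary cone $(\eta_Q:R\to\Pos(S,Q_\bot))_{Q\in\mathcal Q}$, each $(\eta_Q(r))_{Q\in\mathcal Q}$ is a matching family, its amalgam defines the mediating map $\eta$, and monotonicity of $\eta$ is checked exactly by your observation that componentwise inequalities pass through the maxima defining the amalgam. Your explicit pair $\Phi=(\rho_Q)_{Q\in\mathcal Q}$ and $\Psi=$ amalgam, with Proposition \ref{amalgamretract} giving $\Phi\circ\Psi=\mathrm{id}$ and the cover condition giving $\Psi\circ\Phi=\mathrm{id}$, is the same computation run against the concrete description of $\varprojlim F_S$ as the subobject of the product cut out by the compatibility conditions.

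The step you flag as the obstacle --- that $h\leq h'$ pointwise forces $\rho_Q(h)\leq\rho_Q(h')$ pointwise --- is not resolved in the paper either: it is hidden in the unproved opening sentence that $(\rho_Q)_{Q\in\mathcal Q}$ ``is a cone'', and the same issue already arises in asserting that $F_S$ lands in $\Pos$. You were right to be suspicious, because for the pointwise order the claim is false in general. Take $P=\mathbf V$ with bottom element $a$ and incomparable maximal elements $b,c$; then $\Adj P$ consists of the identity and the automorphism swapping $b$ and $c$, so $Q=\{a\}$ lies in $\Subw P$ and $\{\{a\},P\}$ is a cover, yet for $S=\{0\}$ the constant chains $h=a\leq h'=b$ give $\rho_Q(h)=a$ and $\rho_Q(h')=\bot$, and $a\not\leq\bot$ in $Q_\bot$. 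So neither your $\Phi$ nor the paper's cone is monotone for the pointwise order. The lemma should therefore be read, as the proof of Theorem \ref{vanKampengen} in effect reads it by passing through the forgetful functor to $\Set$, as an identification of underlying sets together with the compatibility of the maps $\rho_Q$ and $\rho_{Q,Q'}$; the order actually transported into the limit in the application is the taxotopy preorder $\preceq^*$, which is handled separately by Corollary \ref{retractmap}, not the pointwise order. Your steps establishing the bijection and the monotonicity of $\Psi$ already deliver everything the lemma is used for; the remaining step cannot be completed as stated and does not need to be.
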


\begin{proof}
First observe that, since $S\subseteq\mathbb Z$, $(\rho_Q:\Pos(S,P_\bot)\to\Pos(S,Q_\bot))_{Q\in\mathcal Q}$ is a cone over the diagram on the right hand side.

Let $(\eta_Q:R\to\Pos(S,Q_\bot))_{Q\in\mathcal Q}$ be any cone. Then, for each $r\in R$, the family $(\eta_Q(r):S\to Q_\bot)_{Q\in\mathcal Q}$ is a matching family since whenever $Q'\subseteq Q$ we have $\eta_{Q'}(r)=\rho_{Q,Q'}(\eta_Q(r))$. Since every matching family has the amalgamation property, there is a unique amalgam $\eta(r)\in\Pos(S,P_\bot)$ for this matching family. This defines a function $\eta:R\to\Pos(S,P_\bot)$. It remains to check that this map is monotone.

Suppose $r\leq r'$ in $R$. Then $\eta_Q(r)\leq\eta_Q(r')$ for each $Q\in\mathcal Q$, i.e., $\eta_Q(r)(s)\leq\eta_Q(r')(s)$ for each $s\in S$. Hence $\eta_Q(r)(s)\leq\eta_Q(r')(s)\leq\max\{\eta_Q(r')(s):Q\in\mathcal Q\}=:\eta(r')(s)$ for each $Q$. Therefore $\eta(r)(s):=\max\{\eta_Q(r)(s):Q\in\mathcal Q\}\leq\eta(r')(s)$. This completes the proof.
\end{proof}

\begin{definition}
Let $S,P$ be any two posets. For $h,k\in\Pos(S,P_\bot)$, we say that $k\preceq^*h$ if there are $e\in\Adj S$, $f\in\Adj P$ such that $(e,f^{ext})\models k\preceq h$ where $f^{ext}\in\Adj{P_\bot}$ is the unique extension of $f$. The notation $\Lambda^*(S,P_\bot)$ will denote the posetal reflection of the preorder $(\Pos(S,P_\bot),\preceq^*)$.
\end{definition}
Since $P\rightarrowtail P_\bot\in\mathcal E$, there is an induced map $\Lambda(S,P)\to\Lambda^*(S,P_\bot)$.

Note that if $k\preceq^*h$ in $\Lambda^*(S,P_\bot)$, then $k\preceq h$ in $\Lambda(S,P_\bot)$ but the converse is not true in general.

Suppose $h,k\in\Pos(S,P\bot)$ and $k\preceq^*h$ in $\Lambda^*(S,P_\bot)$. If $\im h,\im k\subseteq Q$ for some $Q\in\Subw P$, then $k\preceq h$ in $\Lambda(S,Q)$. In particular, there is an order embedding $\Lambda(S,P)\rightarrowtail\Lambda^*(S,P_\bot)$.

If $\im h,\im k\subseteq Q_\bot$ for some $Q\in\Subw P$, then $k\preceq^*h$ in $\Lambda^*(S,Q_\bot)$.

\begin{pro}\label{retractgen}
Suppose $S\subseteq\mathbb Z$ and $Q\in\Subw P$. Given $S$-chains $\alpha,\beta$ in $P_\bot$, if $\alpha\preceq^*\beta$ in $\Lambda^*(S,P_\bot)$ then $\alpha\cap Q\neq\emptyset$ if and only if $\beta\cap Q\neq\emptyset$. Furthermore if both $\alpha\cap Q$ and $\beta\cap Q$ are nonempty, then
\begin{itemize}
\item $(\alpha\cap Q)\preceq(\beta\cap Q)$ in $\Lambda(S,Q)$;
\item $(\alpha\cap Q_\bot)\preceq^*(\beta\cap Q_\bot)$ in $\Lambda^*(S,Q)$.
\end{itemize}
\end{pro}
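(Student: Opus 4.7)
The strategy is to adapt the proof of Proposition~\ref{retractnormal} to the $P_\bot$ and $\preceq^*$ setting. Fix representatives $h, k \in \Pos(S, P_\bot)$ of $\alpha, \beta$, and witnesses $e \in \Adj S$, $f \in \Adj P$ such that $(e, f^{ext}) \models h \preceq k$, so that $h \circ e^* = (f^{ext})^* \circ k$ and $k \circ e_* = (f^{ext})_* \circ h$. The key structural observation is that $Q \in \Subw P$ gives $f\mathord{\mid}_Q \in \Adj Q$, and the restriction of $f^{ext}$ to $Q_\bot$ coincides with the extension $(f\mathord{\mid}_Q)^{ext}$: both adjoints fix $\bot$ and map $Q$ into $Q$.

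For the ``iff'' statement, if $h(s) \in Q$ then $(f^{ext})_*(h(s)) = f_*(h(s)) \in Q$ by the restriction property, and this value equals $k(e_*(s))$ by the second commutative square, so $\beta \cap Q$ is non-empty. The converse follows symmetrically from $(f^{ext})^*$ and the first square.

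Assuming both intersections are non-empty, I would construct retracts in two versions. For the first bullet, define $h_Q, k_Q \in \Pos(S, Q)$ by the formula in the Remark ($\max$ over $h(s^\downarrow) \cap Q$ when non-empty, otherwise $\min$ over $h(s^\uparrow) \cap Q$); these are well-defined because $S \subseteq \mathbb Z$ and both intersections are non-empty, and monotonicity forces $h_Q(s) = h(s)$ whenever $h(s) \in Q$, so $\im{h_Q} = \alpha \cap Q$, and similarly for $k_Q$. For the second bullet, define $h_{Q_\bot}, k_{Q_\bot} \colon S \to Q_\bot$ via the matching-family formula ($\max$ over $h(s^\downarrow) \cap Q$ when non-empty, else $\bot$), with images $\alpha \cap Q_\bot$ and $\beta \cap Q_\bot$ respectively.

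The bullets then reduce to checking the commutative squares $(e, f\mathord{\mid}_Q) \models h_Q \preceq k_Q$ in $\Lambda(S, Q)$ and $(e, (f\mathord{\mid}_Q)^{ext}) \models h_{Q_\bot} \preceq k_{Q_\bot}$ in $\Lambda^*(S, Q_\bot)$. This is the main obstacle: the key calculation is that if $h_Q(s) = h(s^*)$ for some $s^* \leq s$ realising the max in $Q$, then $(f\mathord{\mid}_Q)_*(h_Q(s)) = f_*(h(s^*)) = k(e_*(s^*))$ must coincide with $k_Q(e_*(s))$. One inequality comes from monotonicity of $e_*$ and $k$; the other uses that $f^*$ preserves finite joins (being a left adjoint) together with the first commuting square to pull back candidate maxima on the $k$-side to values on the $h$-side, where maximality of $s^*$ closes the argument. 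In the $\preceq^*$-setting the default value $\bot$ on either side of the square is matched by the $\bot$-preservation of $(f^{ext})^*$ and $(f^{ext})_*$, so the second bullet follows by the same chase together with the observation that $\bot \in \im{h_{Q_\bot}}$ precisely when some $h(s^\downarrow)\cap Q$ is empty, a condition preserved on both sides of the square.
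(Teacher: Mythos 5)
Your proposal is correct and follows essentially the route the paper intends: the proof of this proposition is omitted in the paper and explicitly deferred to Proposition \ref{retractnormal}, whose argument (the restriction property gives $f\mathord{\mid}_Q\in\Adj Q$, the two commuting squares transport membership in $Q$ between $\alpha$ and $\beta$, and the $\max$/$\min$ retracts from the Remark satisfy the restricted squares) is exactly what you adapt, together with the correct additional observations that the restriction of $f^{ext}$ to $Q_\bot$ is $(f\mathord{\mid}_Q)^{ext}$ and that both adjoints of $f^{ext}$ fix $\bot$. The only cosmetic point is that your closing step does not need $f^*$ to ``preserve finite joins'': for a candidate $t\leq e_*(s)$ with $k(t)\in Q$ one has $e^*(t)\leq e^*e_*(s)\leq s$ and $f^*(k(t))=h(e^*(t))\in Q$ by the first square and the restriction property, so maximality of $h(s^*)$ in $h(s^\downarrow)\cap Q$ together with the adjunction equivalence $f^*(y)\leq x\iff y\leq f_*(x)$ already yields $k(t)\leq f_*(h(s^*))$.
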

The proof of this proposition is similar to the proof of Proposition \ref{retractnormal} and is omitted.

\begin{cor}\label{retractmap}
Suppose $S\subseteq\mathbb Z$ and $Q\in\Subw P$. If $h,k\in\Pos(S,P_\bot)$ with $h\preceq^*k$ in $\Lambda^*(S,P_\bot)$, then $\rho_Q(h)\preceq^*\rho_Q(k)$ in $\Lambda^*(S,Q_\bot)$.
\end{cor}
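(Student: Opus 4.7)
The plan is to transfer the witnessing adjunctions for $h\preceq^* k$ to witnesses for $\rho_Q(h)\preceq^*\rho_Q(k)$ by restricting the adjunction on $P$ down to $Q$. Fix $e\in\Adj S$ and $f\in\Adj P$ with $(e,f^{ext})\models h\preceq k$, so that $h\circ e^*=f^{ext,*}\circ k$ and $k\circ e_*=f^{ext}_*\circ h$. Since $Q\in\Subw P$, the adjunction $f$ restricts to $f|_Q\in\Adj Q$, and $(f|_Q)^{ext}\in\Adj{Q_\bot}$ coincides with the restriction of $f^{ext}$ to $Q_\bot$ (both extensions fix $\bot$). My claim, which yields the corollary, is that $(e,(f|_Q)^{ext})$ witnesses $\rho_Q(h)\preceq\rho_Q(k)$ in $\Pos(S,Q_\bot)$.

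Verifying the claim reduces to two pointwise identities at each $s\in S$, and by an entirely analogous argument I need only treat $\rho_Q(h)(e^*(s))=(f|_Q)^{ext,*}(\rho_Q(k)(s))$. When $\rho_Q(k)(s)\in Q$, write it as $k(s_0)$ for the largest $s_0\leq s$ with $k(s_0)\in Q$; the original commuting square yields $f^*(k(s_0))=h(e^*(s_0))\in h(e^*(s)^\downarrow)\cap Q$ (using that $f^*$ preserves $Q$), which gives the bound $\rho_Q(h)(e^*(s))\geq f^*(k(s_0))$. For the reverse inequality, any hypothetical $h(t)\in Q$ with $t\leq e^*(s)$ and $h(t)>f^*(k(s_0))$ would, upon applying $f^{ext}_*$ and invoking the bijection $f^*:\im{f_*}\rightleftarrows\im{f^*}:f_*$ restricted to $Q$, produce $k(e_*(t))\in Q$ strictly exceeding $k(s_0)$, contradicting maximality of $s_0$.

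The main obstacle is the complementary case $\rho_Q(k)(s)=\bot$, which demands $\rho_Q(h)(e^*(s))=\bot$. A hypothetical $t\leq e^*(s)$ with $h(t)\in Q$ still gives $k(e_*(t))=f^{ext}_*(h(t))\in Q$ via the commuting square, but the unit $e_*e^*\geq\id S$ only guarantees $e_*(t)\leq e_*e^*(s)$, which may strictly exceed $s$ and so not immediately produce a witness $s'\leq s$ with $k(s')\in Q$. I plan to handle this by first invoking Proposition \ref{retractgen} at the image level to enforce the dichotomy $\im h\cap Q=\emptyset\Leftrightarrow\im k\cap Q=\emptyset$, and then restricting attention to $t$ in the fixed set $E^*=\im{e^*}$ of the interior operator $e^*e_*$ (between which $e^*$ and $e_*$ are mutually inverse with $E_*=\im{e_*}$), so that linearity of $S\subseteq\mathbb Z$ forces either $e_*(t)\leq s$ directly or a contradiction via monotonicity of $k$ and the maximality considerations above. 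The entire computation is the map-level analogue of the chain-level bookkeeping in the proof of Proposition \ref{retractnormal}, which the authors indicated transfers directly to the $\Lambda^*$-setting of Proposition \ref{retractgen}.
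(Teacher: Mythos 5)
Your proof founders at the ``reverse inequality'' step of the second paragraph, and the gap is not repairable: the central claim that $(e,(f\mathord{\mid}_Q)^{ext})$ witnesses $\rho_Q(h)\preceq\rho_Q(k)$ is false. From $h(t)\in Q$ with $t\leq e^*(s)$ and $h(t)>f^*(k(s_0))$, applying $f_*$ yields only $f_*(h(t))\geq f_*f^*(k(s_0))\geq k(s_0)$, with no strictness: the bijection $f^*:\im{f_*}\rightleftarrows\im{f^*}:f_*$ that you invoke lives only between the fixed sets of the closure and interior operators, and $h(t)$ need not lie in $\im{f^*}$. An element of $\im h\cap Q$ outside $\im{f^*}$ can be strictly larger than $f^*(k(s_0))$ and still be collapsed onto $k(s_0)$ by $f_*$, so no index $s'\leq s$ with $k(s')\in Q$ and $k(s')>k(s_0)$ is produced and no contradiction arises.

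Worse, the conclusion itself fails on such configurations, so no choice of witnesses can save the argument. Let $P=\{b_1,b_2,c,d,a_1,a_2\}$ with $b_1,b_2<c<d<a_1,a_2$ ($b_1,b_2$ incomparable minimal, $a_1,a_2$ incomparable maximal) and $Q=\{c,d\}$; the existence of both adjoints forces every $f\in\Adj P$ to permute $\{b_1,b_2\}$ and $\{a_1,a_2\}$ and to map $\{c,d\}$ into itself under both $f^*$ and $f_*$, so $Q\in\Subw P$. Take $f$ to be the identity on $\{b_1,b_2,a_1,a_2\}$ with $f^*(c)=f^*(d)=c$ and $f_*(c)=f_*(d)=d$; take $e^*(n)=2n$, $e_*(n)=\lfloor n/2\rfloor$; take $k(s)=d$ for $s\leq 0$ and $k(s)=a_1$ for $s\geq 1$, and $h(t)=c$ for $t\leq 0$, $h(1)=d$, $h(t)=a_1$ for $t\geq 2$. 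Direct verification gives $h\circ e^*=f^*\circ k$ and $k\circ e_*=f_*\circ h$, so $(e,f^{ext})\models h\preceq^* k$. Yet $\rho_Q(k)$ is the constant map $d$ while $\rho_Q(h)$ is non-constant (value $c$ for $t\leq 0$, value $d$ for $t\geq 1$), and by the first part of Proposition \ref{inftofin} no non-constant chain is taxotopic to a constant one; hence $\rho_Q(h)\not\preceq^*\rho_Q(k)$. The culprit is exactly $h(1)=d\in Q\setminus\im{f^*}$. The same defect is hidden in the paper's own route through Propositions \ref{retractnormal} and \ref{retractgen}, whose (sketched) proofs assert the incorrect identities $f^*(\beta)\cap Q=\alpha\cap Q$ and claim $f^*,f_*$ restrict to bijections between the full images $\alpha$ and $\beta$; only the intersections $\im h\cap\im{f^*}$ and $\im k\cap\im{f_*}$ correspond bijectively (as in the proof of Proposition \ref{compactcomparison}). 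Any correct version of the corollary must either work with those intersections or impose hypotheses excluding elements of $Q\cap\im h$ outside the fixed sets.
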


\begin{proof}
The case when $\im h\cap Q,\im k\cap Q$ are nonempty are covered by the lemma above. We also know that $\im h\cap Q=\emptyset$ if and only if $\im k\cap Q=\emptyset$. In this situation, the taxotopy retracts are constant $\bot$ maps and thus are taxotopy equivalent.
\end{proof}

Now we are ready to state and prove the most general van Kampen type theorem for $\Lambda(S,-)$ whenever $S\subseteq\mathbb Z$. The theory of matching families and chain-compact covers we have developed so far enables us to recover $\Lambda(S,P)$ from the data present in the images of $\Lambda(S,-)$ at elements of a chain-compact cover of $P$.
\begin{theorem}[Seifert-van Kampen theorem for $\Lambda(S,-)$]\label{vanKampengen}
Let $S\subseteq\mathbb Z$ (and hence homogeneous) and $\mathcal Q$ be a chain-compact cover of a nonempty poset $P$. If every $\mathcal Q$-indexed matching family of $S$-chains has the amalgamation property, then for $h,h'\in\Pos(S,P_\bot)$, $h\preceq^*h'$ in $\Lambda^*(S,P_\bot)$ if and only if $\rho_Q(h)\preceq^*\rho_Q(h')$ in $\Lambda^*(S,Q_\bot)$ for each $Q\in\mathcal Q$. More formally,
\begin{equation*}
\Lambda^*(S,P_\bot)=\varprojlim\{(\Lambda^*(S,Q_\bot))_{Q\in\mathcal Q},(\Lambda^*(\rho_{Q,Q'}):\Lambda^*(S,Q_\bot)\to\Lambda^*(S,Q'_\bot))_{Q'\subseteq Q}\}.
\end{equation*}
\end{theorem}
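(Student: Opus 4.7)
The proof splits into the two directions of the stated biconditional on $\preceq^*$; the limit formulation then follows by combining the biconditional with the set-theoretic identification from Lemma \ref{limpos}, after passing to equivalence classes.

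For the forward direction, suppose $h\preceq^* h'$ in $\Lambda^*(S,P_\bot)$. For each $Q\in\mathcal Q\subseteq\Subw P$, Corollary \ref{retractmap} gives $\rho_Q(h)\preceq^*\rho_Q(h')$ in $\Lambda^*(S,Q_\bot)$, and naturality of the greatest-lower-bound construction ensures that the maps $\Lambda^*(\rho_Q)$ form a cone from $\Lambda^*(S,P_\bot)$ to the diagram. This direction is essentially formal.

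The reverse direction is the heart of the argument. Suppose that for each $Q\in\mathcal Q$ we have witnesses $(e_Q,f_Q)\in\Adj S\times\Adj Q$ with $(e_Q,f_Q^{ext})\models\rho_Q(h)\preceq\rho_Q(h')$. My plan is to produce global witnesses $(e,f)\in\Adj S\times\Adj P$ in two stages. First, I use the homogeneity of $S$ together with Lemma \ref{adjZ} and Corollary \ref{Lisinternal} to replace each $e_Q$ by a single uniformly chosen $e\in\Adj S$: on the $S$-side, the local taxotopy relations depend only on image structure, and the structure of $\im h$ and $\im h'$ themselves is controlled by the $\im\rho_Q(h),\im\rho_Q(h')$ through Proposition \ref{compactrestriction}. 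Second, with $e$ now fixed, I construct a compatible family $\{f_Q'\in\Adj Q\}_{Q\in\mathcal Q}$ by defining $f_Q'$ canonically from the triple $(e,h,h')$: on the image of $\rho_Q(h)$ set $f_{Q,*}'\rho_Q(h)=\rho_Q(h')e_*$ (and dually for $f_Q'^*$ on the image of $\rho_Q(h')$), then extend by the appropriate closure/interior behaviour off those images. I then apply Proposition \ref{compactextension}, valid because $\mathcal Q$ is chain-compact, to glue the family into a single $f\in\Adj P$ with $f|_Q=f_Q'$. Pointwise commutativity of the global taxotopy square on $P_\bot$ then follows by Proposition \ref{amalgamretract}: for each $s\in S$ the value $h(s)$ lies in some $Q_\bot$, and there the square collapses to the local one witnessed by $(e,f_Q'^{ext})$.

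The main obstacle is forcing compatibility of the local adjunctions. A priori the witnesses $f_Q$ handed to us need not satisfy $(f_Q)|_{Q\cap Q'}=(f_{Q'})|_{Q\cap Q'}$, so gluing via Proposition \ref{compactextension} is unavailable. The canonical recipe above circumvents this by reading off $f_Q'$ directly from $h,h'$ through $e$, so that compatibility on $Q\cap Q'$ reduces to the matching-family identity $\rho_{Q,Q\cap Q'}(\rho_Q(h))=\rho_{Q\cap Q'}(h)$ and the corresponding identity for $h'$, combined with the restriction property of each $Q\in\Subw P$. Once this compatibility is established the argument concludes cleanly, and the stated equality of limits follows: Lemma \ref{limpos} identifies the underlying hom-sets, the biconditional just proved identifies the preorder relations, and together these statements realise $\Lambda^*(S,P_\bot)$ as $\varprojlim\Lambda^*(S,Q_\bot)$ in $\Pos$.
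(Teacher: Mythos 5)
Your forward direction matches the paper's and is fine. The converse is where your proposal diverges from the paper's proof, and it contains a genuine gap. You correctly identify the central obstacle --- the local witnesses $f_Q$ need not restrict compatibly on overlaps, so Proposition \ref{compactextension} cannot be applied to them directly --- but your proposed repair does not work as described. First, you cannot fix a single $e\in\Adj S$ \emph{before} having a compatible family on the codomain side: the adjunction $e$ must align the partition of $S$ induced by $h$ with that induced by $h'$ according to the correspondence between $\im h$ and $\im{h'}$ determined by the codomain adjunctions (the bijection between closed and open elements), and that correspondence is exactly the data you do not yet have globally. Homogeneity of $S$ only lets you adjust the domain adjunction when the codomain adjunction is the identity and the images coincide; it does not uniformize the $e_Q$ across the cover while preserving the commuting squares. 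Second, your canonical recipe defines $f'_Q$ only on $\im{\rho_Q(h)}\cup\im{\rho_Q(h')}$ and then appeals to ``the appropriate closure/interior behaviour off those images''; but a partial order-isomorphism between two chains in an arbitrary $Q$ need not extend to a Galois connection on all of $Q$ (this works for cones precisely because the chains can be arranged to contain $\top$ and $\bot$, and fails in general). So the family $\{f'_Q\}$ you want to glue may simply not exist.

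The paper resolves this differently: compatibility of the codomain witnesses is taken as part of the hypothesis in the converse direction (it is what membership of a related pair in the displayed limit actually encodes, and the forward direction shows such compatible witnesses always exist, via Proposition \ref{retractgen}). One then glues the compatible $f_Q$ into $f\in\Adj P$ by Proposition \ref{compactextension}, and only \emph{afterwards} constructs $e\in\Adj S$ explicitly from the partial matching $\chi$ between the $f$-open and $f$-closed elements of $\im h$ and $\im{h'}$, using Propositions \ref{compactcomparison} and \ref{compactrestriction} to verify that $\im h$ and $\im{h'}$ have maxima/minima and contain $\bot$ simultaneously (so that the embeddings of $I,I'$ into $S$ can be normalized at the endpoints), and finally invoking homogeneity to pass from the constructed $h_1,h'_1$ back to $h,h'$. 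If you want to keep your order of construction, you would need to first prove that compatible witnesses can always be extracted from arbitrary componentwise witnesses; as stated, that is precisely the unproved step on which your argument rests.
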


\begin{proof}
Under the hypothesis of the theorem, we showed the existence of the limit of $F_S:\mathcal Q^{op}\to\Pos$ in Lemma \ref{limpos}. The forgetful functor $\Pos\to\Set$ is continuous since it has a left adjoint and hence the limit of $F_S$ exists in $\Set$ and it is precisely the underlying set of $\Pos(S,P_\bot)$. So we only need to show that all morphisms in the limit diagram are well-behaved with respect to the taxotopy preorder $\preceq^*$.

For any $h_Q\in\Pos(S,P_\bot)$ and $Q'\subseteq Q$ in $\mathcal Q$, we know $\rho_{Q,Q'}(h_Q)$ is a taxotopy retract of $h_Q$. Therefore if $h_Q,h'_Q\in\Pos(S,Q_\bot)$ satisfy $h_Q\preceq^*h'_Q$ in $\Lambda^*(S,Q\bot)$, then by Corollary \ref{retractmap}, we obtain that $\rho_{Q,Q'}(h_Q)\preceq\rho_{Q,Q'}(h'_Q)$. In other words, the map $\rho_{Q,Q'}$ induces a map between $\Lambda^*(S,-)$. Moreover the proof of Proposition \ref{retractgen} tells us that the adjunction on $Q'_\bot$ witnessing the latter inequality can be chosen to be the restriction of the adjunction witnessing the former.

For similar reasons the map $\rho_Q:\Pos(S,P_\bot)\to\Pos(S,Q_\bot)$ preserves taxotopy order and hence there is an induced map $\Lambda^*(S,P_\bot)\to\Lambda^*(S,Q_\bot)$. In summary, if $h\preceq^*h'$ in $\Lambda^*(S,P_\bot)$ then the matching families $(\rho_Q(h))_{Q\in\mathcal Q}$ and $(\rho_Q(h'))_{Q\in\mathcal Q}$ have component-wise taxotopy relations in respective posets where adjunctions commute with the maps $\rho_{Q,Q'}$ in the diagram.

For the converse, let $(h_Q)_{Q\in\mathcal Q}$ and $(h'_Q)_{Q\in\mathcal Q}$ be two matching families with $h_Q\preceq^*h'_Q$ in $\Lambda^*(S,Q_\bot)$ such that for each $Q'\subseteq Q$ the adjunction $f_{Q'}$ on $Q'$ witnessing $h_{Q'}\preceq^*h'_{Q'}$ is the restriction of the adjunction $f_Q\in\Adj Q$ witnessing $h_Q\preceq^*h'_Q$, i.e., the adjunctions commute with the maps $\rho_{Q,Q'}$ in the diagram. As every matching family has the amalgamation property, we obtain the amalgams $h,h'\in\Pos(S,P_\bot)$. We need to find $e\in\Adj S$ and $f\in\Adj P$ such that $(e,f^{ext})\models h\preceq^* h'$ in $\Lambda^*(S,P_\bot)$.

Since $(f_Q)_{Q\in\mathcal Q}$ is a compatible family of adjunctions indexed by the elements of a chain-compact cover, Proposition \ref{compactextension} yields a unique extension $f\in\Adj P$ such that $f\mathord{\mid}_Q=f_Q$, i.e., $f$ commutes with the maps $\rho_Q$ and $\rho_{Q,Q'}$.

Now we construct $e\in\Adj S$ with the required property. Let $I,I'$ be total orders isomorphic to the sets of distinct elements in $\im h,\im{h'}$ respectively with order isomorphisms $\psi:I\to\im h$, $\psi':I'\to\im{h'}$. Let $\chi:I\rightharpoonup I'$ denote the partial matching on $I\times I'$ that is induced by the bijection between $f$-open and $f$-closed elements in $\im h,\im{h'}$. Let $J,J'$ denote the domain and codomain of the map $\chi$. There is a natural adjunction $I\rightleftarrows I'$ with fixed sets $J$ and $J'$.

We first show that $\im h$ has no maximum element if and only if $\im{h'}$ has no maximum element. Proposition \ref{compactrestriction} states that $\im h$ has no maximum if and only if, for each $Q\in\mathcal Q$, $\im{h_Q}$ has no maximum. Since $h_Q\preceq^*h'_Q$ in $\Lambda^*(S,Q_\bot)$, by Proposition \ref{compactcomparison}, this happens if and only if, for each $Q$, $\im{h'_Q}$ has no maximum element. Again by Proposition \ref{compactrestriction}, this happens if and only if the image of the amalgam $\im{h'}$ has no maximum element.

Dually, $\im h\cap P$ has no minimum if and only if $\im{h'}\cap P$ has no minimum. Recall that $\bot\notin\im h$ if and only if for each $s\in S$ there is $Q\in\mathcal Q$ with $h_Q(s)\neq\bot$. As a consequence we obtain that $\bot\notin\im h$ if and only if $\bot\notin\im{h'}$.

Since both $I,I'$ are images of $S$ under monotone maps, there are embeddings $\phi:I\to S$ and $\phi':I'\to S$. We impose some further restrictions on $\phi$ and $\phi'$.
\begin{itemize}
\item If $\bot\in\im h\cap\im{h'}$, then $I,I'$ have a least element, say $i_\bot, i'_\bot$ respectively, then we assume that $\phi(i_\bot)=\phi'(i'_\bot)$.
\item If $\min(\im h\cap P),\min(\im{h'}\cap P)$ exist then $i_0:=\min(I\setminus\{i_\bot\})$ and $i'_0:=\min(I'\setminus\{i'_\bot\})$ exist, then we assume that $\phi(i_0)=\phi'(i'_0)$.
\item If $i_1:=\max I$ and $i'_1:=\max I'$ exist, then we assume that $\phi(i_1)=\phi'(i'_1)$.
\end{itemize}
Define $e^*:S\rightleftarrows S:e_*$ as follows.
\begin{eqnarray*}
e^*(s')&:=&\begin{cases}\chi^{-1}(\min(s'^\uparrow\cap\phi'(J')))&\mbox{if }\emptyset\neq s'^\uparrow\cap\phi'(J')\neq\phi'(J'),\\
s'&\mbox{otherwise}.\end{cases}\\
e_*(s)&:=&\begin{cases}\chi(\max(s^\downarrow\cap\phi(J)))&\mbox{if }\emptyset\neq s^\downarrow\cap\phi(J)\neq\phi(J),\\
s&\mbox{otherwise}.\end{cases}
\end{eqnarray*}
The minimum and maximum in the above definitions exist since $S\subseteq\mathbb Z$. It is clear from the construction that $e\in\Adj S$.

Now define $h_1:S\to P_\bot$ by $h_1(s):=\psi(\max(s^\downarrow\cap\phi(I)))$ and $h'_1:S\to P_\bot$ by $h'_1(s'):=\psi'(\min(s'^\uparrow\cap\phi'(I')))$. Clearly $(e,f^{ext})\models h_1\preceq h'_1$. Since $S$ is homogeneous, $\im h=\im{h_1}$ and $\im{h'}=\im{ h'_1}$ imply that $h_1\approx h$ and $h'_1\approx h'$. Since homogeneity only affects the adjunction on the domain, we can conclude $h_1\approx^*h$ and $h'_1\approx^*h'$. Hence $h\preceq h'$.
\end{proof}

Although this theorem talks about $\Lambda^*(S,P_\bot)$, we can recover $\Lambda(S,P)$ from it as there is an order embedding $\Lambda(S,P)\rightarrowtail\Lambda^*(S,P_\bot)$.

The introduction of the bottom point in each poset is necessary for finding a good representative in the taxotopy equivalence class of the taxotopy retract. One could alternatively work with a top element and obtain a similar statement. This vaguely reminds the authors of the necessity of choosing a base point to define the fundamental group (not groupoid!) in topological homotopy theory. The introduction of the taxotopy preorder $\preceq^*$ further justifies this point as, in the topological world, the base point must be preserved.

\begin{rmk}
If we consider the cover $\mathcal Q$ as a category, then we know that $P$ is the colimit of this category. The topological van Kampen theorem takes the pushout diagram of inclusions in $\Top_*$ to a pushout diagram in the category of groups. We have already seen in Theorem \ref{vanKampenlambda} that a similar statement holds for $\lambda$ but one cannot expect, entirely for cardinality reason, a colimit diagram for $\Lambda(S,-)$ when $S$ is not a singleton. We shall see an example in Corollary \ref{Parispointgen} where the set of chains in the original poset has about the same size as the product of the sizes of the sets of chains in individual components of the cover. Hence limits are more appropriate in when $S\neq\mathbf 1$.
\end{rmk}

Note that if a chain-compact cover is finite, then every matching family automatically has the amalgamation property.

The following special case of the above theorem looks more like its topological counterpart.
\begin{cor}\label{vanKampenLambdastandard}
Suppose $S\subseteq\mathbb Z$ (and hence homogeneous). If $P=Q\cup_{Q\cap Q'}Q'$ such that $\{Q,Q',Q\cap Q'\}$ is a chain-compact cover of $P$, then there is a pullback diagram of the $\rho$s
\begin{equation*}
\Lambda^*(S,P_\bot)=\Lambda^*(S,Q_\bot)\times_{\Lambda^*(S,(Q\cap Q')_\bot)}\Lambda^*(S,Q'_\bot).
\end{equation*}
In particular,
\begin{eqnarray*}
\lambda^*(P_\bot)&=&\lambda^*(Q_\bot)\times_{\lambda^*((Q\cap Q')_\bot)}\lambda^*(Q'_\bot),\\
L^*(P_\bot)&=&L^*(Q_\bot)\times_{L^*((Q\cap Q')_\bot)}L^*(Q'_\bot).
\end{eqnarray*}
\end{cor}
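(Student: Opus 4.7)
The plan is to apply Theorem \ref{vanKampengen} to the three-element cover $\mathcal Q = \{Q, Q', Q\cap Q'\}$ and recognise that the resulting limit has the shape of a pullback. Since $S \subseteq \mathbb Z$, the total order $S$ is homogeneous. The cover $\mathcal Q$ is finite and chain-compact by hypothesis, so the observation immediately preceding the corollary guarantees that every $\mathcal Q$-indexed matching family of $S$-chains has the amalgamation property. Hence all hypotheses of Theorem \ref{vanKampengen} are met.

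Applying the theorem yields
$$\Lambda^*(S, P_\bot) = \varprojlim F_S,$$
where $F_S : \mathcal Q^{op} \to \Pos$ sends each $R \in \mathcal Q$ to $\Lambda^*(S, R_\bot)$ and each inclusion $R'' \subseteq R$ to the induced map $\Lambda^*(\rho_{R, R''})$. In the poset $(\mathcal Q, \subseteq)$ the elements $Q$ and $Q'$ are (generically) incomparable, so the only non-identity morphisms are the inclusions $Q \cap Q' \subseteq Q$ and $Q \cap Q' \subseteq Q'$. Consequently $F_S$ is the cospan
$$\Lambda^*(S, Q_\bot) \xrightarrow{\Lambda^*(\rho_{Q, Q \cap Q'})} \Lambda^*(S, (Q \cap Q')_\bot) \xleftarrow{\Lambda^*(\rho_{Q', Q \cap Q'})} \Lambda^*(S, Q'_\bot),$$
whose limit in $\Pos$ is by definition the pullback displayed in the corollary.

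The specialisations to $\lambda^*$ and $L^*$ follow by taking $S = \mathbf 1$ and $S = \mathbb Z$ respectively; both are subsets of $\mathbb Z$ and hence homogeneous, so the hypotheses transfer without modification. The whole argument is then a formal translation: invoke the general theorem, then identify the indexing category as a cospan. The only edge cases worth noting are the degenerate situations $Q \subseteq Q'$ (or vice versa), where $\mathcal Q$ collapses to two or one distinct elements; in these cases one side of the pullback becomes an identity and the asserted equality reduces correctly. Thus no substantive obstacle arises beyond verifying the shape of $F_S$.
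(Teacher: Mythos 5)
Your proposal is correct and matches the paper's intent exactly: the paper presents this corollary as an immediate specialisation of Theorem \ref{vanKampengen}, relying on the remark that a finite chain-compact cover automatically satisfies the amalgamation property, and the limit over the cospan-shaped diagram $\mathcal Q^{op}$ is precisely the stated pullback. Your identification of $\lambda^*$ and $L^*$ as the cases $S=\mathbf 1$ and $S=\mathbb Z$ is also the intended reading.
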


The finiteness of the chain-compact cover is crucial in the above corollary.
\begin{cor}
Let $P$ be a finite poset and $\mathcal Q$ be any chain-compact cover of $P$. Then $\Lambda^*(S,P_\bot)=\varprojlim(\Lambda^*(S,Q_\bot))_{Q\in\mathcal Q}$.
\end{cor}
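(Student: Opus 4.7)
The goal is to deduce this statement directly from Theorem \ref{vanKampengen}; beyond the hypotheses already assumed (chain-compactness of $\mathcal Q$, and $S \subseteq \mathbb Z$ understood from context), the only thing that needs verification is that every $\mathcal Q$-indexed matching family of $S$-chains admits an amalgam.

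The first step is to observe that finiteness of $P$ forces the cover itself to be finite: since $\mathcal Q$ is a subset of the finite set $\mathcal P(P)\setminus\{\emptyset\}$, it has only finitely many members. This puts us squarely in the situation of the remark recorded immediately before the corollary, where it is noted that a finite chain-compact cover automatically ensures the amalgamation property.

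The heart of the plan is then to extract that amalgamation property. For each fixed $s \in S$, the set $\{h_Q(s)\mid Q\in\mathcal Q\}$ is a finite subset of the finite poset $P_\bot$, and the compatibility forced by closure of $\mathcal Q$ under nonempty finite intersections -- together with the defining identity $h_{Q'}(s) = \max\{h_Q(s^\downarrow)\cap Q'\}$ for $Q'\subseteq Q$ -- rules out the existence of two distinct maximal candidates: any two such candidates would have to come from different $Q_1,Q_2\in\mathcal Q$, and applying the matching condition on the intersection $Q_1\cap Q_2\in\mathcal Q$ (which lies below both in $\mathcal Q$) forces comparability. Hence the finite set really does have a top element, which is exactly the amalgamation property.

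With amalgamation in hand alongside the assumed chain-compactness, Theorem \ref{vanKampengen} applies verbatim and yields the identification $\Lambda^*(S,P_\bot) = \varprojlim(\Lambda^*(S,Q_\bot))_{Q\in\mathcal Q}$. The only genuinely delicate step is the extraction of amalgamation from finiteness of $\mathcal Q$; once that is established, the corollary is a straightforward specialisation of the main theorem, requiring no new taxotopy-theoretic input.
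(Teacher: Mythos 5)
Your overall route is exactly the one the paper intends: finiteness of $P$ forces the cover $\mathcal Q\subseteq\mathcal P(P)$ to be finite, a finite chain-compact cover is claimed to give the amalgamation property, and Theorem \ref{vanKampengen} then applies verbatim. The paper offers no proof of the corollary beyond the one-line note preceding Corollary \ref{vanKampenLambdastandard} asserting that finite chain-compact covers automatically have the amalgamation property, so the only substantive content of your write-up is your justification of that note --- and that justification does not work as stated. First, $\mathcal Q$ is only required to be closed under \emph{nonempty} finite intersections, so for incomparable $Q_1,Q_2\in\mathcal Q$ the set $Q_1\cap Q_2$ need not belong to $\mathcal Q$, and there need be no member of $\mathcal Q$ below both through which to compare. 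Second, and more seriously, even when $Q_1\cap Q_2\in\mathcal Q$ the matching condition only asserts $\rho_{Q_1,Q_1\cap Q_2}(h_{Q_1})=h_{Q_1\cap Q_2}=\rho_{Q_2,Q_1\cap Q_2}(h_{Q_2})$: it identifies the greatest-lower-bound retracts of $h_{Q_1}$ and $h_{Q_2}$ on the intersection, i.e., it constrains $\max\bigl(h_{Q_i}(s^\downarrow)\cap(Q_1\cap Q_2)\bigr)$, not the values $h_{Q_1}(s)$ and $h_{Q_2}(s)$ themselves. Those values may lie in $Q_1\setminus Q_2$ and $Q_2\setminus Q_1$ respectively and be incomparable in $P$ while their retracts to $Q_1\cap Q_2$ agree (or are both $\bot$). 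So ``forces comparability'' is not a consequence of the matching condition; finiteness of $\{h_Q(s)\mid Q\in\mathcal Q\}$ gives you maximal elements of that set but not a maximum, which is what amalgamation demands.

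To close the gap you must bring in chain-compactness (or some other structural input) rather than relying on finiteness plus intersection-compatibility alone: for instance, one would want to connect a maximal candidate $h_{Q_1}(s)$ to any other candidate $h_{Q_2}(s)$ along a maximal chain of $P$ finitely covered by members of $\mathcal Q$, propagating the matching condition link by link; alternatively, one can sidestep the issue for the families that actually arise in the limit comparison, namely those of the form $(\rho_Q(h))_{Q\in\mathcal Q}$ for a global $h$, where amalgamation is automatic by Proposition \ref{amalgamretract} --- but then one must check that the cone in Lemma \ref{limpos} only ever produces such families, which is precisely what the amalgamation hypothesis is there to guarantee. As written, the key step of your argument would fail, so the corollary is not yet proved.
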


In the following result we use a construction motivated by the sum of ordinals and work with a cover that is not chain-compact.
\begin{por}
For $P_1,P_2\in\Pos$, let $P_1\lhd P_2$ be the poset that admits a monotone bijection $P_1\sqcup P_2\to P_1\lhd P_2$ and where each element of $P_1$ is a strictly below each element of $P_2$. Then $\Adj{P_1\lhd P_2}=\Adj{P_1}\times\Adj{P_2}$ and hence
\begin{eqnarray*}
\Lambda(\mathbb N^{op}\lhd\mathbb N,P_1\lhd P_2)&=&(\Lambda(\mathbb N^{op},P_1)\times\Lambda(\mathbb N,P_2))\sqcup\Lambda(\mathbb N^{op}\lhd\mathbb N,P_1)\\&&\sqcup\Lambda(\mathbb N^{op}\lhd\mathbb N,P_2).
\end{eqnarray*}
\end{por}

\section{Finding covers and rigid subsets}
So far we have proved two results for the computation of fundamental posets of a poset $P$ both of which rely on the existence of proper subsets in $\Subw P$. In this section we construct two decreasing chains of elements of $\Subw P$ and compute fundamental posets in some concrete examples. Another goal of this section is to find two ``rigid'' subsets of the poset which always embed into the fixed set of the closure/interior operator induced by an adjunction on a poset. We always assume that our posets are finite but the results could be generalized to infinite posets subject to some finiteness conditions.

In a bounded finite poset $P$, the special elements $\top$ and $\bot$ can be thought of as $\bigwedge\emptyset$ and $\bigvee\emptyset$ respectively, and hence such elements are preserved by right and left adjoints respectively. The following definitions describe the iterated generalized empty meets and joins. The preservation properties for such elements under adjoints will be shown in Porism \ref{rigid}.
\begin{definitions}
For a finite connected poset $P$, the notations $\max P$ and $\min P$ denote the subsets of $P$ containing all maximal and minimal elements respectively. Define
\begin{equation*}
T(P):=\bigcap_{x\in\max P}x^\downarrow,\quad B(P):=\bigcap_{y\in\min P}y^\uparrow.
\end{equation*}
Set $T_0(P)=B_0(P):=P$ and, for each $n\geq 1$, define $T_{n+1}(P):=T(T_n(P))$ and $B_{n+1}(P):=B(B_n(P))$.

Since $P$ is finite, the sequences $(T_n(P))_{n\geq 0}$ and $(B_n(P))_{n\geq 0}$ must stabilize.

Define $t(P):=\min\{n\mid T_n(P)=T_{n+1}(P)\mbox{ or }T_{n+1}(P)=\emptyset\}$ and $b(P):=\min\{n\mid B_n(P)=B_{n+1}(P)\mbox{ or }B_{n+1}(P)=\emptyset\}$.
\end{definitions}
We drop reference to the poset $P$ if it is clear from the context.

The following result describes the images of maximal elements under left adjoints.
\begin{pro}
Let $P$ be a finite connected poset and let $f\in\Adj P$. Then there is a permutation $\eta:\max P\to\max P$ such that, for all $a,b\in\max P$, $f^*(a)\leq b$ if and only if $b=\eta(a)$. Dually, there is a permutation $\rho:\min P\to\min P$ such that, for all $a,b\in\min P$, $f_*(b)\geq a$ if and only if $a=\rho(b)$.
\end{pro}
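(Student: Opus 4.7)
The plan is to construct $\eta$ explicitly as the inverse of the restriction of $f_*$ to $\max P$, and to extract the permutation property from the bijective correspondence between the fixed sets of the closure and interior operators induced by the adjunction; the statement about $\rho$ is deduced either by a symmetric argument or by applying the same result to the dualized adjunction $f_*\dashv f^*$ on $P^{op}$.

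First I would set up the structural observations I plan to exploit. From the unit $a\leq f_*f^*(a)$ and maximality of $a\in\max P$ I get $f_*f^*(a)=a$, so $\max P\subseteq f_*(P)$ (and dually $\min P\subseteq f^*(P)$). The Galois bijection $f^*:f_*(P)\rightleftarrows f^*(P):f_*$ is an order isomorphism, so it restricts to a bijection between $\max f_*(P)$ and $\max f^*(P)$; the equality $\max f_*(P)=\max P$ is immediate (any element of $P$ strictly above a maximum of $f_*(P)$ lies under a maximum of $P\subseteq f_*(P)$, which is absurd). So $f^*$ gives a bijection $\max P\to\max f^*(P)$ whose inverse is $f_*$.

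Next I translate the biconditional through the adjunction. For $a,b\in\max P$, the condition $f^*(a)\leq b$ is equivalent to $a\leq f_*(b)$, which, since $a$ is maximal, is equivalent to $f_*(b)=a$; conversely, $f_*(b)=a$ gives $f^*(a)=f^*f_*(b)\leq b$ by the counit. Hence the claimed permutation is exactly $\eta=(f_*|_{\max P})^{-1}$, provided that $f_*|_{\max P}$ is a bijection of $\max P$ onto itself. For existence of $\eta(a)$, finiteness of $P$ yields some $b\in\max P$ with $b\geq f^*(a)$, and the previous paragraph then forces $f_*(b)=a$.

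The main obstacle, and the core of the argument, is showing that $f_*|_{\max P}$ is a well-defined bijection $\max P\to\max P$. Well-definedness (that $f_*(b)\in\max P$ for $b\in\max P$) I would attack by contradiction: if some $c\in\max P$ satisfied $c>f_*(b)$, then adjunction gives $f^*(c)\not\leq b$ while $f^*(c)$ and $f^*(b)$ both lie in $\max f^*(P)$, and chasing the Galois bijection forces an order-theoretic clash with the maximality of $b$ in $P$. Injectivity is similar in spirit: if $b,b'\in\max P$ with $f_*(b)=f_*(b')=a$, then $f^*f_*(b)=f^*(a)=f^*f_*(b')$, so $b,b'$ share the same interior $f^*(a)\in\max f^*(P)$; combined with $f_*f^*(b)=b$ and $f_*f^*(b')=b'$, the injectivity of $f_*$ restricted to $f^*(P)$ then forces $b=b'$. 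Bijectivity is automatic from injectivity and finiteness of $\max P$. The dual argument applied to $f_*$ with the roles of $f^*\dashv f_*$, $\max$, and the closure operator swapped for $f_*\dashv f^*$ (viewing $P^{op}$), $\min$, and the interior operator produces the permutation $\rho$.
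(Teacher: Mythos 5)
Your reduction of the biconditional is correct and clean: for $a,b\in\max P$ the adjunction gives $f^*(a)\leq b\iff a\leq f_*(b)\iff f_*(b)=a$, so the whole proposition comes down to showing that each $a\in\max P$ has exactly one preimage in $\max P$ under $f_*$, and your existence argument (some maximal $b$ lies above $f^*(a)$, forcing $f_*(b)=a$) is fine. But the two load-bearing claims --- that $f_*$ maps $\max P$ into $\max P$ and that it does so injectively --- are not actually proved. For well-definedness you assert that $c>f_*(b)$ leads to ``an order-theoretic clash'' by chasing the Galois bijection, but no clash is exhibited: all one gets is $f^*(c)\not\leq b$ together with $f^*f_*(b)<f^*(c)$, which is perfectly consistent in a general poset ($f^*(c)$ simply sits under some other maximal element). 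For injectivity the argument is circular: from $f_*(b)=f_*(b')=a$ you only learn $f^*f_*(b)=f^*f_*(b')=f^*(a)$, which is a tautology, and the injectivity of $f_*$ on $f^*(P)$ does not apply to $b,b'$, since maximal elements lie in $f_*(P)$ (where it is $f^*$ that is injective), not in $f^*(P)$. Applying $f_*$ to the displayed equality just returns $f_*(b)=f_*(b')$.

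Both claims are true, but they genuinely require a global counting step using finiteness of $\max P$; no purely local chase suffices. The paper's version: the sets $U(a)=\{b\in\max P:f^*(a)\leq b\}$ are nonempty (finiteness) and pairwise disjoint, because $f^*(a_1),f^*(a_2)\leq b$ forces $a_i=f_*f^*(a_i)\leq f_*(b)$ and hence $a_1=f_*(b)=a_2$ by maximality; the pigeonhole principle then makes each $U(a)$ a singleton. A variant closer to your formulation: your existence step already shows every $b\in\max P$ equals $f_*(m)$ for some $m\in\max P$, so $f_*(\max P)\supseteq\max P$; since $|f_*(\max P)|\leq|\max P|<\infty$, the restriction $f_*\mathord{\mid}_{\max P}$ must be a bijection of $\max P$ onto itself, which simultaneously delivers well-definedness and injectivity. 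Either way, insert the finiteness count explicitly and your $\eta=(f_*\mathord{\mid}_{\max P})^{-1}$ strategy goes through; the dual statement for $\rho$ is then fine as you describe.
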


\begin{proof}
We just show the statement for the maximal elements; the proof for the minimal elements follows by duality.

Let $a_1,a_2\in\max P$. If $(f^*a_1)^\uparrow\cap(f^*a_2)^\uparrow\neq\emptyset$, then there is some $b\in\max P$ such that $f^*a_1,f^*a_2\leq b$. Applying $f_*$ we obtain $a_1,a_2\leq f_*b$ since $f_*f^*a_i=a_i$ for $i=1,2$. This is impossible unless $a_1=a_2$ since both are maximal. Since $\max P$ is finite, an application of the pigeonhole principle completes the proof.
\end{proof}

Now we fulfill our promise to provide a way to compute the fundamental poset of a finite connected poset using the knowledge of the fundamental posets of its subposets.
\begin{theorem}\label{Parispoint}
Let $P$ be a finite connected poset with $t:=t(P)$ and $b:=b(P)$. Suppose $T_t(P)=\{x\}$ and $\min B_b(P)=\{y\}$ are cutsets with $x\geq y$. Then
\begin{equation*}
\lambda(P)=\lambda(T_t)\cup_{\lambda(T_t\cap B_b)}\lambda(B_b)=\lambda(x^\downarrow)\cup_{\lambda([y,x])}\lambda(y^\uparrow).
\end{equation*}
\end{theorem}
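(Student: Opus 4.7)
The plan is to realize $P$ as a pushout of posets $P = x^\downarrow \cup_{[y,x]} y^\uparrow$ and then invoke the Seifert--van Kampen theorem for $\lambda$ (Theorem~\ref{vanKampenlambda}).

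First, the reduction to a pushout. Since $T_t(P) = \{x\}$ is a cutset, every maximal chain of $P$ passes through $x$, so every $p \in P$ lies on a maximal chain containing $x$ and is therefore comparable to $x$; this yields $P = x^\downarrow \cup x^\uparrow$. Analogously, $\min B_b(P) = \{y\}$ being a cutset gives $P = y^\downarrow \cup y^\uparrow$. Combining these with $x \geq y$ (so $y^\downarrow \subseteq x^\downarrow$ and $x^\uparrow \subseteq y^\uparrow$) yields $P = x^\downarrow \cup y^\uparrow$, with intersection $x^\downarrow \cap y^\uparrow = [y,x]$. Hence one obtains the pushout of order embeddings $P = x^\downarrow \cup_{[y,x]} y^\uparrow$ in $\Pos$.

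Next, to apply Theorem~\ref{vanKampenlambda} one must verify that the four inclusions in this pushout are continuous and have the restriction property. The key tool is the rigidity of $x$ and $y$ under $\Adj P$, to be established in Porism~\ref{rigid}: iterating the preceding proposition on the permutations of $\max P$ and $\min P$ through the sequences $T_i(P)$ and $\min B_i(P)$ collapses them (by hypothesis) to singletons, forcing every $f \in \Adj P$ to satisfy $f^*(x) = f_*(x) = x$ and $f^*(y) = f_*(y) = y$. Monotonicity of $f^*, f_*$ together with fixedness of $x$ (resp.\ $y$) then immediately yields the restriction property for the outer inclusions $x^\downarrow, y^\uparrow \hookrightarrow P$.

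For the inner inclusions and for continuity of all four, the plan is to use an ``extension by identity'' construction. Given $g \in \Adj(x^\downarrow)$, define $\tilde{g}^* = g^*$ on $x^\downarrow$ and $\tilde{g}^* = \mathrm{id}$ on $x^\uparrow \setminus \{x\}$ (and dually for $\tilde{g}_*$); well-definedness at $x$ uses $g_*(x) = x$ (since $x$ is the top of $x^\downarrow$), monotonicity uses that every $p \in x^\uparrow \setminus \{x\}$ strictly dominates every element of $x^\downarrow$, and adjointness follows by a case split on whether the two arguments lie in $x^\downarrow$ or in $x^\uparrow \setminus \{x\}$. Applying rigidity to $\tilde{g}$ then forces $g$ to fix $y$, so $g$ restricts to $[y,x]$, giving the restriction property for $[y,x] \hookrightarrow x^\downarrow$. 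Continuity of each inclusion is witnessed by the same lifting: any adjunction witnessing $p_1 \preceq p_2$ on the smaller piece extends via identity on the complement to one on the larger piece, still witnessing $p_1 \preceq p_2$. The dual construction handles the inclusions into $y^\uparrow$, using $y^\downarrow \setminus \{y\}$ and the fact that $y$ is the bottom of $y^\uparrow$.

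The main obstacle I expect is ensuring that these piecewise extensions remain genuinely adjoint across the seam at $x$ (resp.\ $y$). The case analysis is routine once rigidity supplies the boundary conditions $g_*(x) = x$ and $g^*(y) = y$, but without these fixed-point constraints the extension could fail either monotonicity or adjointness, so the rigidity input from Porism~\ref{rigid} is essential for the argument to close.
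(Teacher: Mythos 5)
Your proposal is correct and follows essentially the same route as the paper's proof: both use the cutset hypothesis to extend adjunctions by the identity across $x$ and $y$ (giving continuity/extension for all four inclusions), use the iterated permutation argument on $\max$ and $\min$ to pin down $x$ and $y$ and hence obtain the restriction property, and then invoke Theorem~\ref{vanKampenlambda}; the only cosmetic difference is that you work directly with the pieces $x^\downarrow,y^\uparrow,[y,x]$ where the paper phrases the pushout via $T_t$ and $B_b$ and the chain $T_{n+1}\in\Subw{T_n}$. One small overstatement: rigidity gives $f^*(y)=y$ but only $f_*(y)\geq y$ in general (dually $f_*(x)=x$ but only $f^*(x)\leq x$ when $T_t$ is not literally a singleton), which is all you actually need for $y^\uparrow$ and $x^\downarrow$ to lie in $\Subw P$.
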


\begin{proof}
Let $\{z\}$ be a cutset in a poset $Q$. Then the inclusions $z^\downarrow\rightarrowtail Q$ and $z^\uparrow\rightarrowtail Q$ have the extension property for one can extend an adjunction on the domain by identity maps.

Note that $\{x\}$ is a cutset in $P$ as well as in $B_b$; similarly $\{y\}$ is a cutset in $P$ as well as in $T_t$. Moreover $T_t\cap B_b=[y,x]$. Hence all four inclusions $[y,x]\rightarrowtail x^\downarrow\rightarrowtail P$, $[y,x]\rightarrowtail y^\uparrow \rightarrowtail P$ have extension.

\textbf{Claim:} $T(P)\in\Subw P$.

Let $f\in\Adj P$ and $p\in T(P)$. We want to show that $f^*p,f_*p\in T(P)$.

Since $p\in T(P)$, we have $p\leq a$ for all $a\in\max P$. Hence $f^*p\leq f^*a$ for all $a\in\max P$. From the above proposition we obtain a unique upper bound for $f^*a$ in $\max P$ and for each $b\in\max P$ there is a unique $a\in\max P$ such that $f^*a\leq b$. Hence we conclude that $f^*p\leq b$ for all $b\in\max P$. This shows that $f^*p\in T(P)$.

Now we show that $f_*p\in T(P)$. Since $p\leq b$ for all $b\in\max P$, we get $f_*p\leq f_*b$ for all $b\in\max P$. But $\{f_*b:b\in\max P\}=\max P$. Hence $f_*p\leq a$ for all $a\in\max P$. This completes the proof of the claim.

Since $T_{n+1}(P)=T(T_n(P))$, we get $T_{n+1}(P)\in\Subw{T_n(P)}$ for each $0\leq n\leq t-1$ using a similar argument. Since $\mathcal R$ is a category, the inclusion $T_t\rightarrowtail P$ has the restriction property. Dually we also obtain that $B_b\in\Subw P$.

Finally note that $x\geq y$ implies the equalities $T_n(B_b)=T_n(P)\cap B_b$ and $B_n(T_t)=B_n(P)\cap T_t$ for each $n\geq 0$. In particular, $T_t(B_b)=[y,x]=T_t\cap B_b=B_b(T_t)$. Hence $[y,x]\in\Subw{T_t}\cap\Subw{B_b}$.

Applying Theorem \ref{vanKampenlambda} to the pushout $P=T_t\cup_{T_t\cap B_b}B_b$ where all inclusions have extension as well as restriction properties, we get the required pushout diagram of their $\lambda$s.
\end{proof}

The following generalization of the above has essentially the same proof.
\begin{theorem}
Let $P$ be a finite connected poset with $t:=t(P)$ and $b:=b(P)$. Suppose the following conditions are satisfied for some $n\leq t$ and $m\leq b$.
\begin{itemize}
\item The antichains $\max T_n$ and $\min B_m$ are cutsets in $P$.
\item For each $p\in\min B_m$ there is $q\in\max T_n$ such that $p\leq q$.
\item The inclusions $T_n\rightarrowtail P$ and $B_m\rightarrowtail P$ have the extension property.
\end{itemize}
Then $\lambda(P)=\lambda(T_n)\cup_{\lambda(T_n\cap B_m)}\lambda(B_m)$.
\end{theorem}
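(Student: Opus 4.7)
The plan is to apply the van Kampen theorem for $\lambda$ (Theorem \ref{vanKampenlambda}) to the pushout decomposition $P = T_n \cup_{T_n \cap B_m} B_m$, in direct parallel with the proof of Theorem \ref{Parispoint}. Three things need to be checked: that $T_n \cup B_m = P$, and that all four inclusions in the pushout square are continuous and have the restriction property.

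First I would establish $T_n \cup B_m = P$. An easy induction shows each $T_k$ is a lower set of $P$, since $T(Q) = \bigcap_{x \in \max Q} x^\downarrow$ is always a lower set of $Q$; dually each $B_k$ is an upper set. The second bullet then yields $\min B_m \subseteq T_n$. Given any $p \in P$, choose a maximal chain $C$ through $p$; by the cutset hypotheses, $C$ meets $\max T_n$ at some $q$ and $\min B_m$ at some $r$. Since $r \in T_n$, $q \in \max T_n$, and $r$ and $q$ are comparable on $C$, maximality of $q$ in $T_n$ forces $r \leq q$. Then $p$ on $C$ satisfies either $p \leq q$, whence $p \in T_n$, or $p \geq r$, whence $p \in B_m$.

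For the restriction property of the outer inclusions, I would iterate the Claim from the proof of Theorem \ref{Parispoint}---namely $T(Q) \in \Subw Q$ for any finite connected $Q$---to conclude $T_k \in \Subw{T_{k-1}}$ for each $1 \le k \le n$, which composes to $T_n \in \Subw P$ since $\mathcal R$ is a category; the $B$-case is dual. Continuity of these inclusions is supplied by the extension hypothesis, since $\mathcal E \subseteq \Cont{}$. For the two inner inclusions $T_n \cap B_m \rightarrowtail T_n$ and $T_n \cap B_m \rightarrowtail B_m$, I would prove by induction on $k$ that $B_k(P) \cap T_n \in \Subw{T_n}$: this uses the dual of the Claim applied inside $T_n$, together with the observation that an adjunction $f \in \Adj{T_n}$ must permute the relevant minimal cutsets precisely because the second bullet places them inside $T_n$ and hence visible to $f$. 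Taking $k = m$ gives $T_n \cap B_m \in \Subw{T_n}$, and dually $T_n \cap B_m \in \Subw{B_m}$.

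The main obstacle I anticipate is this last inductive step: verifying that at each level $k$ the adjunctions on $T_n$ genuinely preserve $B_k(P) \cap T_n$, which requires a careful identification of the minimal elements of that stratum with the analogous cutset data inside $T_n$. The second bullet of the hypothesis---every element of $\min B_m$ sitting below some element of $\max T_n$---is precisely what prevents adjunctions on $T_n$ from ferrying elements of $T_n \cap B_m$ outside $B_m$. Once these restriction properties are established, Theorem \ref{vanKampenlambda} applies to the pushout $P = T_n \cup_{T_n \cap B_m} B_m$ and yields the asserted pushout diagram of fundamental posets.
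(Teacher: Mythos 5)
Your proposal follows exactly the route the paper intends: the paper omits the proof of this theorem, stating only that it has ``essentially the same proof'' as Theorem \ref{Parispoint}, and your argument is precisely that proof adapted to antichain cutsets --- establish $T_n\cup B_m=P$ from the cutset and comparability hypotheses, obtain the restriction property of all four inclusions by iterating the Claim $T(Q)\in\Subw{Q}$ and its dual, obtain continuity from the extension hypothesis, and invoke Theorem \ref{vanKampenlambda}. The inner-inclusion step you flag as the main obstacle is indeed where the remaining details live (one must verify $B_k(T_n)=B_k(P)\cap T_n$ at every stage $k\le m$, which needs $\min B_k(P)\subseteq T_n$ along the way, not only at $k=m$), but your identification of the second bullet and the cutset condition as the relevant inputs matches how the paper treats the corresponding step in Theorem \ref{Parispoint}.
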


The following result guarantees the existence of a certain poset in each fixed set.
\begin{por}[Rigid subsets]\label{rigid}
Let $P$ be a finite connected poset. Given any $f\in\Adj P$ the closure operator $f_*f^*$ fixes $\bigsqcup_{0\leq n\leq t}\max T_n(P)$ pointwise and the interior operator $f^*f_*$ fixes $\bigsqcup_{0\leq m\leq b}\min B_m(P)$ pointwise.
\end{por}

\begin{proof}
In the proof of Theorem \ref{Parispoint} we showed that for any finite connected poset $Q$, $T(Q)\in\Subw Q$. Iterating the same argument an adjunction on $P$ restricts to an adjunction on $T_n(P)$ for any $0\leq n\leq t$.

Recall that for any adjunction $g$ on a finite connected poset $Q$, the closure operator $g_*g^*$ fixes $\max Q$ pointwise. Combining the above two statements we observe that $f_*f^*$ fixes $\max T_n(P)$ pointwise. The second statement follows by duality.
\end{proof}

The following result follows immediately from Corollary \ref{vanKampenLambdastandard} and the proof of Theorem \ref{Parispoint}.
\begin{cor}\label{Parispointgen}
Let $S\subseteq\mathbb Z$ and $P$ be a finite connected poset with $t:=t(P)$ and $b:=b(P)$. Suppose the following conditions are satisfied for some $n\leq t$ and $m\leq b$.
\begin{itemize}
\item The antichains $\max T_n$ and $\min B_m$ are cutsets in $P$.
\item For each $p\in\min B_m$ there is $q\in\max T_n$ such that $p\leq q$.
\end{itemize}
Then $\Lambda^*(S,P_\bot)=\Lambda^*(S,(T_n)_\bot)\times_{\Lambda^*(S,(T_n\cap B_m)_\bot)}\Lambda^*(S,(B_m)_\bot)$.
\end{cor}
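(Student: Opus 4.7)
The plan is to invoke Corollary \ref{vanKampenLambdastandard} with $Q := T_n$ and $Q' := B_m$, so the main task is to verify that $\{T_n, B_m, T_n\cap B_m\}$ is a chain-compact cover of $P$. Once this is done, the stated pullback description of $\Lambda^*(S,P_\bot)$ is immediate from that corollary applied to the decomposition $P = T_n \cup_{T_n \cap B_m} B_m$.

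The memberships $T_n \in \Subw P$, $B_m \in \Subw P$, and $T_n \cap B_m \in \Subw{T_n} \cap \Subw{B_m}$ are recycled from the proof of Theorem \ref{Parispoint}. The key claim there, that $T(Q) \in \Subw Q$ for any finite connected poset $Q$, depends only on the permutation proposition for maximal elements under adjoints and does not use any cutset hypothesis; iterating gives $T_k \in \Subw P$ for all $k$, and dually for $B_k$. The intersection statements follow by applying the same iteration inside $T_n$ and inside $B_m$, together with the comparability observation in the next paragraph.

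The structural fact driving the cover property is the inclusion $\min B_m \subseteq T_n$, which is immediate from the second hypothesis combined with the fact that $T_n$ is a down-set of $P$. Given $z \in P$ and any maximal chain $C$ through $z$, the cutset hypothesis picks out unique elements $a \in \max T_n \cap C$ and $b \in \min B_m \cap C$; since $b \in T_n \cap C$ and $a$ is the maximum of $T_n \cap C$, we have $b \leq a$ in $C$. Comparing $z$ with $a$ then gives either $z \leq a$ (so $z \in T_n$) or $z > a \geq b$ (so $z \in B_m$, since $B_m$ is an up-set). Hence $T_n \cup B_m = P$.

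The hard part is the third clause of chain-compactness: for $a < b$ in a maximal chain $I$ lying in no common cover element, we must have $a \in T_n \setminus B_m$ and $b \in B_m \setminus T_n$, and I would use $a^* := \max(T_n \cap I)$ as the midpoint in the subchain $a < a^* < b$. Verifying $a^* \in T_n \cap B_m$ uses $\min(B_m \cap I) \leq a^*$ from the previous paragraph combined with the up-set property of $B_m$, while $a \leq a^* < b$ follows from $a \in T_n \cap I$ and $b \notin T_n$. The first two chain-compactness clauses are immediate from $I \subseteq T_n \cup B_m$ and the convexity of each cover element, and then Corollary \ref{vanKampenLambdastandard} completes the proof.
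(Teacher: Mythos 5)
Your proposal is correct and takes essentially the same route as the paper, which derives this corollary by combining Corollary \ref{vanKampenLambdastandard} with the subposet facts ($T_n, B_m, T_n\cap B_m\in\Subw P$ and the restriction properties of the inclusions) established in the proof of Theorem \ref{Parispoint}. Your explicit verification of chain-compactness---covering $P=T_n\cup B_m$ via the down-set/up-set structure and splitting an uncovered pair $a<b$ at the midpoint $a^*=\max(T_n\cap I)\in T_n\cap B_m$---correctly supplies the detail the paper leaves implicit.
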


\section{Future directions}
An algebra is usually defined to be a structure with functions that lacks relations. Taxotopy theory could be loosely thought of as a category-theoretic version of homotopy theory minus algebra. As of now we are not aware of any applications of taxotopy theory, mainly because it is not well-developed yet! We are continuing to develop it, with further results to appear in a forthcoming paper.

The order complex $\Delta(P)$ of a finite poset $P$ is a finite simplicial complex. Given a finite simplicial complex $K$, one can construct the poset $\mathcal P(K)$ of inclusions of faces. The complexes $K$ and $\Delta(\mathcal P(K))$ are homotopy equivalent but unfortunately the partial order structures on $P$ and $\mathcal P(\Delta(P))$ are unrelated. Bergman \cite{Berg} recently provided a way to lift the partial order on $P$ to a partial order structure on the simplicial complex $\Delta(P)$. Building on this idea, we will provide some results on simplicial taxotopy theory in the next paper.

Another topic we will address in a future work is the study of different topologies on the set of adjunctions on a poset/category. There are at least three natural topologies on $\Adj P$ which interact well with the adjoint topology on $P$.

Owing to the similarities of taxotopy theory with homotopy theory, it is natural to ask whether a given construction, property or a theorem in homotopy theory has an analogue in taxotopy theory. Some of the topics for which we would like to explore possible taxotopy-theoretic analogues include covering spaces, higher fundamental groups, long exact sequences of fundamental groups, and---most importantly---model structures.

Sometimes we cannot expect any analogue for trivial reasons. For example the fundamental group preserves finite products. We cannot expect analogous statement for $\lambda(-)$ because $\Adj{P\times Q}$ is much bigger than $\Adj P\times\Adj Q$.

As a byproduct of the theory we developed, we found some results on fixed points and fixed sets in the section on rigid subsets; fixed point theory has been a topic of interest in topology as well as in order theory for over a century. It will be worth investigating the connections between fixed-point theory and taxotopy theory further.

Lastly, we ask whether ideas from taxotopy theory can be applied to residuated monoids. A residuated monoid is an ordered monoid $M$ with the property that, for each $a\in M$, the (monotone) multiplication functions $a\cdot(-)$ and $(-)\cdot a$ from $M$ to $M$ have named right adjoints; see \cite{BlythJanowitz}. Preliminary investigations suggest that if the definition of the taxotopy preorder on $M$ was strengthened to require the existence of these ``algebraic'' adjunctions, the resulting preorder on $M$ would not necessarily be trivial, and could provide interesting information about the structure of $M$.

\vspace{8pt}
\begin{flushleft}
Amit Kuber\\
Dipartimento di Matematica e Fisica\\
Seconda Universit\'{a} degli Studi di Napoli\\
Email: \texttt{expinfinity1@gmail.com}
\end{flushleft}

\vspace{8pt}
\begin{flushleft}
David Wilding\\
Email: \texttt{david@dpw.me}\\
\end{flushleft}

\begin{thebibliography}{99}
\bibitem{Alex} P.S. Alexandroff, Diskrete R\"{a}ume, Mat. Sb. (N.S.) (in German), 2(3) (1937), 501-519.

\bibitem{Berg} G.M. Bergman, Simplicial complexes with lattice structures, Preprint (2015).

\bibitem{BlythJanowitz} T.S. Blyth and M.F. Janowitz, Residuation Theory, No. 102 in International Series of Monographs in Pure and Applied Mathematics, Pergamon Press, Oxford, 1972.

\bibitem{FajRos} L. Fajstrup and J. Rosick\'{y}, A convenient category for directed homotopy, Theory and Applications of Categories, 21(1) (2008), 7-20.

\bibitem{Goubault} E. Goubault, Some geometric perspectives in concurrency theory, Homology Homotopy Appl., 5(2) (2003), 95-136.

\bibitem{GrandisI} \S 1.4 for connections with preorders and bitopologies M. Grandis, Directed homotopy theory, I, Cah. Top. Geom. Diff. Cat., 44(4) (2003), 281-316.

\bibitem{GrandisII} M. Grandis, Directed homotopy theory, II. Homotopy constructs, Theory Appl. Categ, 10(14) (2002), 369-391.

\bibitem{GrandisBook} M. Grandis, Directed Algebraic Topology: Models of non-reversible worlds, Vol. 13, Cambridge University Press, 2009.

\bibitem{Hatcher} A. Hatcher, Algebraic Topology, Cambridge University Press, Cambridge, 2002.

\bibitem{Howie} J.M. Howie, Fundamentals of Semigroup Theory, No. 12 in LMS Monographs (New Series), New York: Clarendon Press, Oxford University Press, 1995.

\bibitem{MacLane} S. MacLane, Categories for the Working Mathematician, Vol. 5, Springer, 1998.

\bibitem{Moerdijk} S. MacLane and I. Moerdijk, Sheaves in Geometry and Logic: A first introduction to Topos Theory, Springer-Verlag, Berlin and New York, 2012.

\bibitem{McCord} M.C. McCord, Singular homology groups and homotopy groups of finite topological spaces, Duke Math. J., 33(3) (1966), 465-474.

\bibitem{Steiner} A.K. Steiner, The lattice of topologies: Structure and complementation, Trans. Amer. Math. Soc., 122 (1966), 379-398.

\bibitem{Weibel} C. Weibel, The K-book: An introduction to Algebraic $K$-theory, Graduate Studies in Math., Vol. 145, AMS, 2013.
\end{thebibliography}
\end{document}